\documentclass{elsarticle}

\usepackage{graphicx}
\usepackage{amssymb}
\usepackage{amsthm}
\usepackage{lineno}
\usepackage{tikz}
\usepackage{hyperref}
\usepackage[utf8]{inputenc}
\usepackage{fixltx2e,fix-cm}
\usepackage{amsmath}
\usepackage{amsfonts}
\usepackage{subfigure}
\usepackage{color}
\usepackage{enumerate}
\usepackage{makeidx}
\usepackage{multicol}
\usepackage{lipsum}
\usepackage{comment}
\usetikzlibrary{decorations.markings}
\usepackage{mathtools}
\usepackage{tabularx}
\usepackage{footnote}
\makesavenoteenv{tabularx}
\usepackage{pdfpages}

\makeatletter
\def\ps@pprintTitle{%
 \let\@oddhead\@empty
 \let\@evenhead\@empty
 \def\@oddfoot{}%
 \let\@evenfoot\@oddfoot}
\makeatother

\tikzset{Node Label Style/.style={style={draw,circle}, fill=white, minimum size=0.5cm}}

\newtheorem{theorem}{Theorem}[section]
\newtheorem{proposition}[theorem]{Proposition}
\newtheorem{lemma}[theorem]{Lemma}
\newtheorem{corollary}[theorem]{Corollary}

\theoremstyle{definition}
\newtheorem{definition}[theorem]{Definition}

\theoremstyle{remark}
\newtheorem{remark}[theorem]{Remark}
\newtheorem{example}[theorem]{Example}

\theoremstyle{plain}

\theoremstyle{plain}

\newenvironment{decisionproblem}[1]%
	{\vskip\topsep\noindent\textsc{#1.\/}}%
	{\par\vskip\topsep}%

\newenvironment{amatrix}[1]{%
  \left[\begin{array}{@{}*{#1}{c}|c@{}}
}{%
  \end{array}\right]
}

\graphicspath{{figures/}}

\begin{document}

\begin{frontmatter}

\title{Computational complexity and pragmatic solutions for flexible tile based DNA self-assembly}

\author[label1]{Leyda Almod\'ovar}
\address[label1]{Stonehill College, Easton, MA}
\ead{lalmodovarvel@stonehill.edu}

\author[label2]{Jo Ellis-Monaghan}
\address[label2]{University of Amsterdam, the Netherlands}
\ead{jellismonaghan@gmail.com}

\author[label3]{Amanda Harsy}
\address[label3]{Lewis University, Romeoville, IL}
\ead{harsyram@lewisu.edu}

\author[label4]{Cory Johnson}
\address[label4]{California State University, San Bernardino, San Bernardino, CA}
\ead{corrine.johnson@csusb.edu}

\author[label5]{Jessica Sorrells}
\address[label5]{Converse University, Spartanburg, SC}
\ead{jessica.sorrells@converse.edu}

\begin{abstract}
 Branched junction molecule assembly of DNA nanostructures, pioneered by Seeman's laboratory in the 1980s, has become increasingly sophisticated, as have the assembly targets. A critical design step is finding minimal sets of branched junction molecules that will self-assemble into target structures without unwanted substructures forming.  We use graph theory, which is a  natural design tool for self-assembling DNA complexes, to address 
 this problem. After determining that finding optimal design strategies for this method is generally NP-complete, we provide pragmatic solutions in the form of programs for special settings and provably optimal solutions for natural assembly targets such as platonic solids, regular lattices, and nanotubes. These examples also illustrate the range of design challenges.  
\end{abstract}

\begin{keyword}
DNA self-assembly \sep DNA tiles  \sep tile based assembly \sep branched junction molecules \sep computational complexity \sep spectrum of a pot \sep nanotube \sep lattice graph
\end{keyword}

\end{frontmatter}

\newpage


\section{Introduction \& Background}  

Branched junction molecule self-assembly of DNA nanostructures, pioneered by Seeman's laboratory in the 1980s, has become increasingly sophisticated, as have the assembly targets. 
In this method, multi-armed DNA molecules with controlled sequences of unsatisfied sites on the ends of their arms bond with one another to self-assemble into a desired shape.  \emph{Tiles} are combinatorial abstractions of these branched junction molecules. We address the fundamental design problem of determining  optimal sets of tiles that will self-assemble into the targeted shapes.

We use graph theory, which is a  natural design tool for self-assembling DNA complexes, to model  self-assembly from flexible-armed tiles. Results presented here add to the current knowledge of optimal solutions and algorithms for different classes of graphs, including lattice graphs, which can be used in nanotube construction. This includes creating general design theory and finding accurate bounds for the number of tile types and bond-edge types for a variety of new graphs. Questions regarding design strategies for realizing a target graph are considered under three different scenarios of graded levels of restriction. 

We begin by establishing the computational complexity of two common design challenges, namely determining what a given collection of tiles will produce, and determining whether a collection of tiles that will produce a desired target structure will also produce unwanted incidental smaller structures.  We prove that in both cases the problem is intractible. This means that unless P=NP, fast general algorithms are unfeasible.  Thus, we turn to explicit designs for high-utility graphs such as the platonic solids and lattices.  While provably optimal designs are possible for these special graphs, the designs in turn reveal new challenges.  The examples given here highlight some of those challenges.  

Some of the target graphs we analyze are mathematically standard, such as the platonic solids. The hexahedron (cube graph) is particularly notable in terms of DNA nanostructures. The DNA cube was first successfully constructed from branched-junction molecules in 1991 \cite{chen1991synthesis}, signifying the first laboratory achievement of a ``closed'' DNA nanostructure. The ability to produce a cube established DNA as a material that could form functional structures via self-assembly. In general, the platonic solids are $k$-regular, thus optimal pots for the least restrictive scenario were determined previously in \cite{ellis2014minimal}. Optimal pots for more restrictive scenarios were previously known only for the tetrahedron. We have determined optimal solutions in more restrictive scenarios for the hexahedron, octahedron, and icosahedron, as well as upper bounds for the dodecahedron (see \cite{repository}).

Other graphs we have chosen because of their applications. DNA nanotubes are a fundamental form used for molecular channeling, drug delivery, and biomolecular sensing \cite{rothemund2004design, wilner2011self}. We investigate both planar lattices and tube structures formed by identifying edges of lattices. ``Closing'' tile lattices to form tubes by identifying lattice boundaries is a method frequently used in laboratories, but finding efficient ways to construct these tubes has proven challenging \cite{Liu2019, wilner2011self}.  In this work we provide optimal solutions for all sizes of square and triangular lattice tubes under the conditions of the least restrictive scenario. In addition to the proofs directly included here, we provide a repository of results for a variety of graph families under different restrictive conditions (again, see \cite{repository}). 

Furthermore, the examples here demonstrate the difficulty of finding solutions in the more restrictive scenarios.  A system of linear equations can give lower bounds, but due to the complexity results mentioned above, only in special settings. We illustrate the possibility of two non-isomorphic graphs forming from the unique solution of the linear system, again highlighting the difficulty of this problem in more restrictive scenarios. The triangle lattice graph example chosen gives optimal solutions in the most restrictive scenario for both the number of molecule structure types and the number of DNA strand types, but proves that the two cannot be realized simultaneously. 

Our work provides rigorous theoretical tools for the emergent science of DNA-self assembly.  Addressing the problem of minimizing the number of molecule structures and cohesive-end types is essential especially in the case where a wet-lab design fails since having fewer tiles and bonds can make it easier to determine the error for the failed design \cite{hansen2018DNA}.  Moreover, the parameters investigated here, such as the minimum number of different tiles needed to realize a particular graph $G$, are new graph invariants and thus of intrinsic combinatorial interest.  In determining these parameters for various families of graphs, we lay the foundations for future work discovering what structural information they might encode.

\subsection{Paper Organization}

The remainder of the paper is structured as follows. In Section \ref{sec:background} we give an overview of the history and applications of DNA self-assembly. In Section \ref{sec:definitions} we provide some basic definitions and terminology used throughout this paper and also describe the various laboratory conditions considered in our work. Section \ref{sec:prob diff} describes the computational  complexity  of determining  what  graphs  may  be  constructed  from  a  given  collection of molecule types and of assuring that no undesired substructures will form. 
In Section \ref{sec:code}, we discuss software we created that assists with the optimization design problems this research tackles. We share selected results from platonic solids, lattice, and tube graphs in Section \ref{sec:SelectedResults}. The graph results we share were chosen due to their unique results. Other results are outlined in Figure \ref{tableofresults} with details shared in \cite{repository}. 

\subsection{DNA Self-Assembly Background}\label{sec:background}

DNA self-assembly, and self-assembly in general, is a rapidly advancing field, with \cite{72, 78} providing good overviews. Synthetic DNA molecules have been designed to self-assemble into given nanostructures, starting with branched DNA molecules \cite{59, 89}, nanoscale arrays \cite{91, 92}, numerous polyhedra \cite{benson2015DNA, chen1991synthesis, he2008hierarchical,  49, 84, zhang1994construction}, arbitrary graphs \cite{55, 75, 94}, a variety of DNA and RNA knots \cite{64, 65, 88}, and the first macroscopic self-assembled 3D DNA crystals \cite{99}. This has led to molecular scaffoldings made of DNA and other proteins which have wide-ranging potential \cite{steph2020}. Such applications include construction of containers for the transport and release of nano-cargos, templates for the controlled growth of nano-objects, biomolecular computing, biosensors, fine screen filters (lattices) at the nano-size scale, nano-circuitry, robotics, and drug-delivery methods (see \cite{adleman1994molecular, ellis2019tile, ferrari2018, 41, 43, 61, labean2007constructing, 67, 69, seeman2007overview, wickham2012DNA,  yan2003DNA, 70}). In particular, DNA nanotechnology is arguably the most powerful method for building nanoscale structures resembling macroscopic robots \cite{gerling2015}. In 2012, polyhedral cages were self-assembled from branched DNA tiles using biotin to guide the process \cite{zhang2012}. DNA ``origami" methods have been utilized in biophysics for nucleosome assembly \cite{funke20162, funke2016, le2016}.
  
DNA-based designs have increased rapidly in recent years because they are relatively cheap and easy to synthesize \cite{hansen2018DNA}. Furthermore, their predictable behavior and structure allows them to be programmable. Most pharmaceutical
companies have  started utilizing this DNA-based technology to design new drug-like molecules \cite{goodnow2017dna}. Since 1994, the use of computer algorithms has become a powerful tool to help solve the combinatorial optimization problems within DNA-based designs \cite{adleman1994molecular}.   Specifically, mathematical methods especially in the realm of  linear programming and graph theory have be used to help optimize automate design strategies for DNA-templated programs \cite{ andersen2015towards, hansen2018DNA}.

Graph theory can expedite the process of designing component molecular building blocks for self-assembly since the target structures are often wire frames such as polyhedral skeletons, surface meshes, cages, lattice subsets, and other graph-like structures. See \cite{jonoska2006spectrum} for some of the first graph theoretical foundations of DNA self-assembly designs. Self-assembling DNA molecules have already been modeled by graphs which represent cubes \cite{chen1991synthesis}, octahedra \cite{rothemund2006folding, zhang1994construction}, and polyhedra including buckyballs, tetrahedra, and dodecahedra \cite{he2008hierarchical}. In \cite{ellis2013example}, Ellis-Monaghan and Pangborn discuss several other mathematical techniques which may be used to aid in solving these design problems including origami and linear strand threading methods. Ellis-Monaghan et al. and LaBean and Li give a survey of construction methods in \cite{ ellis2019tile, labean2007constructing}.

\subsection{Graph Theoretical Definitions and Notation}\label{sec:definitions}  
We use the graph theoretical formalism of \cite{ellis2019tile}, and provide some of the essential constructions here  for the convenience of the reader (see also \cite{jonoska2006spectrum}).  A discrete graph $G$ consists of a set $V=V(G)$ of vertices and a set $E=E(G)$ of edges together with a map $\mu:E \rightarrow V^{(2)}$  where $V^{(2)}$ is the set of (not necessarily distinct) unordered pairs of elements of $V$. If $\mu(e)=\{u,v\}$, then $u$ and $v$ are the vertices incident with $e$. We allow graphs to have loops and multiple edges, so that it is possible for $\mu(e) = (u,v)$ with $u=v$ (in the case of a loop edge) or for $\mu(e)=\mu(f)$ (in the case of multiple edges). We denote a \emph{half-edge} of a vertex $v$ as $(v,e)$ if $v \in \mu(e)$. Throughout we use the notation $\#$ to denote size or quantity, so that e.g. $\#V(G)$ will denote the order of $G$.

\begin{definition} A \emph{k-armed branched junction molecule} is a star-shaped molecule whose arms are formed from strands of DNA, possibly multiple strands.  At the end of each of these arms is a region of unsatisfied bases, forming a \emph{cohesive-end} (also known informally as a \emph{sticky end}).  Arms with complementary cohesive-ends can bond via Watson-Crick base pairing. \end{definition}

In the simplest setting, the arms of a branched junction molecule are double stranded DNA with one strand extending beyond the other to form a cohesive region (see \cite{SK94}).  In this case, the molecules are quite flexible.  Thus, the mathematical model here assumes no geometric restrictions, for example on the inter-arm angles, edge lengths, or ability for any one arm to connect with any other.  See \cite{ferrari2018} for a model that encompasses such geometric constraints.

  \begin{figure} \centering \includegraphics[width=2.5in]{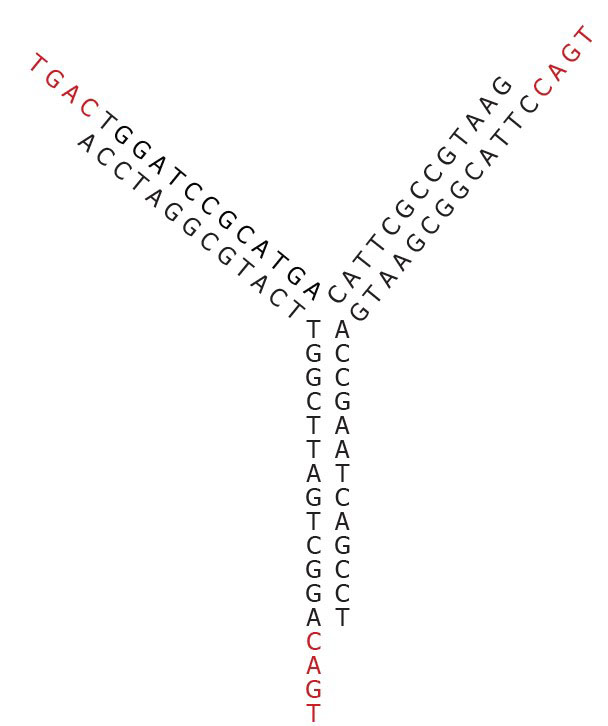} \caption{Representation of a branched junction molecule} \end{figure}

\begin{definition}
A \emph{cohesive-end type} is an element of a finite set $S = \Sigma \cup \hat{\Sigma} $ of hatted and un-hatted symbols.  Each cohesive-end type corresponds to a distinct arrangement of bases forming a cohesive-end on the end of a branched junction molecule arm, such that a hatted and an un-hatted symbol, say $a$ and $\hat{a}$, correspond to complementary cohesive-ends.  We do not allow palindromic cohesive-ends,  so a cohesive-end corresponding to 
 $a$ is complementary to but different from the cohesive-end corresponding to $\hat{a}$ for all $a\in \Sigma$. Moreover we use the convention that $\hat{\hat a} =a$. 

\end{definition}

\begin{definition}
A cohesive-end type joined to its complement forms a \emph{bond-edge type}, which we identify by the un-hatted symbol, so for example, cohesive-end types $a$ and $\hat{a}$ will join to form a bond-edge of type $a$.
\end{definition}

\begin{definition}  A $k$-armed branched junction molecule is represented in the mathematical model by a vertex of degree $k$ with $k$ incident half-edges called a \emph{tile}. \footnote{Such tiles are sometimes called `flexible tiles' to distinguish them from the small, rigid tiles of Wang tilings or tile assembly models (TAM), surveyed for example in \cite{CW2017}.  However, for simplicity we use just the word `tile' here.}

The half-edges of a tile are labeled by the cohesive-end types corresponding to the cohesive-ends on the arms of the molecule the tile represents. A tile is denoted by a multi-set of its cohesive-end types whose multiple entries of the same cohesive-end type are indicated by the exponent to the corresponding symbol. See Figure \ref{potoftiles}. The number of arms of a tile $t$ is denoted by $\#t$.  
\end{definition}

\begin{figure}\centering \includegraphics[scale=.25]{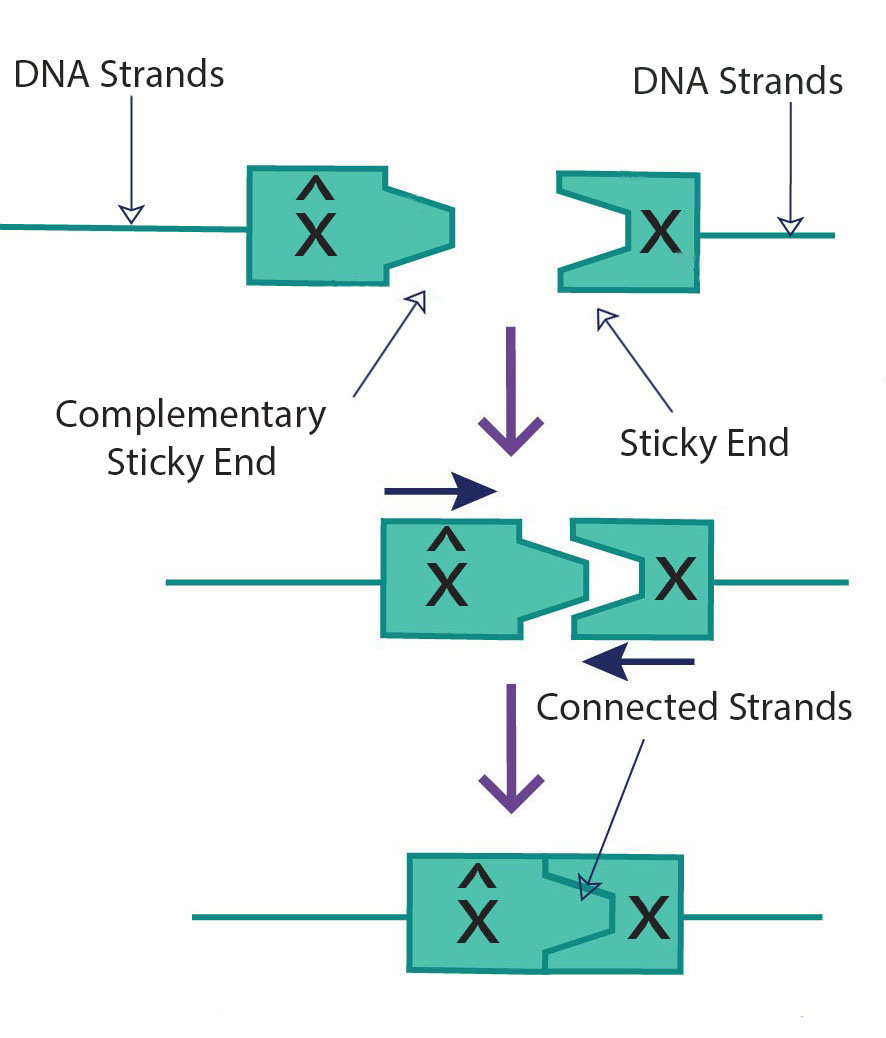}\caption{Formation of a bond-edge} \end{figure}
 
\begin{figure} \centering \includegraphics[scale=.7]{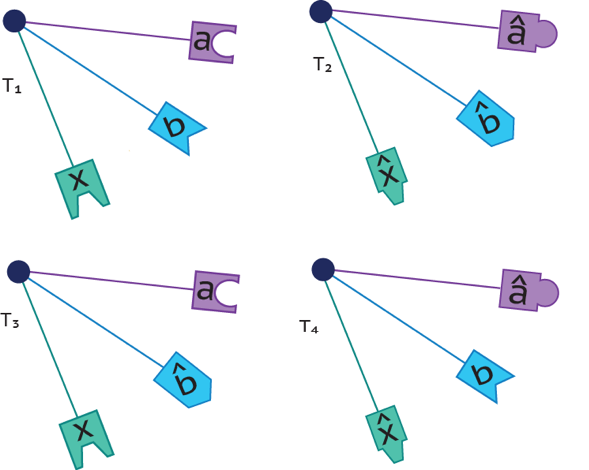}\caption{Pot of tiles, $P = \{ \{a,b,x\}, \{\hat{a}, \hat{b}, \hat{x} \}, \{a, \hat{b}, x \}, \{\hat{a}, b, \hat{x}\}\}$}\label{potoftiles} \end{figure}

\begin{definition} A \emph{pot} is a collection of tiles such that if a cohesive-end type appears in the multi-set of any tile in the pot, its complement also appears on some tile in the pot. More precisely, a \textit{pot} is a set $P = \{t_1,...,t_k\}$ where each $t_i$ is a tile and for all $a\in \Sigma$, if there is $i$ such that $a\in t_i$ then there is $j\in\{1,...,k\}$ such that $\hat{a}\in t_j$. The set of bond-edge types that appear in tiles of $P$ is denoted with $\Sigma(P)$, and we write $\#P$ to denote the number of distinct tile types in $P$ and $\#\Sigma(P)$ to denote the number of distinct bond-edge types that appear in $P$.  These distinct bond-edge types may be thought of as colors, as in \cite{BF2020}. 
\end{definition}

The objective is to assemble a target graph from the collection of tiles, or conversely to determine which graphs can be assembled from a given collection of tiles.  This requires mapping tiles to the vertices of the target graph, and labels from $S$ to the half-edges. The formal notation for this process is described in the following definitions.

\begin{definition} An \textit{assembly design} of a graph $G$ is a labeling $\lambda : H \rightarrow \Sigma \cup \hat{\Sigma}$ of the half-edges of $G$ with the elements of $\Sigma$ and $\hat{\Sigma}$ such that if $e \in E(G)$ and $\mu(e) = \{u,v\}$, then $\widehat{\lambda(v,e)} = \lambda(u,e)$. This means that each edge of $G$ receives both a hatted and an un-hatted version  of the same symbol, one on each of its half-edges. We use the convention that $\lambda$ provides each edge with an orientation that starts from the un-hatted half-edge to the hatted half-edge.  \end{definition}

\begin{definition} An \emph{assembling pot $P_{\lambda}(G)$} for a graph $G$ with assembly design $\lambda$ is the set $P_{\lambda}(G) = \{t_v \mid v \in V(G)\}$ where $t_v =\{ \lambda (v,e) \mid v \in \mu(e), e \in E(G)\}$. This means that for each vertex $v$ of $G$, the assembly design specifies a tile $t_v$ whose multi-set is the set of labels of half-edges incident to $v$. Note there are two complementary labels when $v$ has a loop, and that in general it is possible for $t_u = t_v$ even when $u \neq v$.  If we view a vertex $v$ as its set of half-edges and a tile as a multiset of labels, then the labeling $\lambda$ can be used to map vertices to tiles by $\lambda : V \rightarrow P_{\lambda}(G)$ such that $\lambda(v) = t_v$. \end{definition}

\begin{definition} We say a pot $P$ \textit{realizes} the graph $G$ if there exists an assembly design $\lambda : H \rightarrow \Sigma \cup \hat{\Sigma}$ such that $P_{\lambda}(G) \subseteq P$. \end{definition} Note that a graph may be realized in more than one way by a pot. For example a pot containing tiles $\{a, \hat{a}\}$ and $\{b, \hat{b}\}$ can realize the graph consisting of a single vertex with one loop in two ways.

\begin{definition} The set of graphs with their associated assembly designs realized by a pot $P$, namely $\{(G,\lambda): P_{\lambda}(G) \subseteq P \}$, is called the \textit{output} of $P$ and is denoted by $\mathcal{O}(P)$. \end{definition}

We consider design process questions regarding which types of final structures can be constructed from a given pot of tiles. Inversely, we can also find a pot of tiles that will realize a given target graph. The goal in answering these questions will be to determine the minimum number of branched junction molecules needed for self-assembling DNA nanostructures as well as their combinatorial structures. Given a target graph $G$, we seek pots $P$ to realize $G$ under three different conditions of varying restriction. 
\begin{itemize}
    \item \emph{Scenario 1.} Least Restrictive: $G \in \mathcal{O}(P)$. Note: This allows the possibility that there exists $H \in \mathcal{O}(P)$ such that $\#V(H) < \#V(G)$. 
    \item \emph{Scenario 2.} Moderately Restrictive: $G \in \mathcal{O}(P)$ and for all $H \in \mathcal{O}(P)$, $\#V(H)\geq \#V(G)$. Note: This allows the possibility that there exists $H \in \mathcal{O}(P)$ such that $\#V(H)=\#V(G)$ but $H \not \cong G$. 
    \item \emph{Scenario 3.} Highly Restrictive: $G \in \mathcal{O}(P)$, and for all $H \in \mathcal{O}(P)$, $\#V(H)\geq\#V(G)$, and if $\#V(H)=\#V(G)$ then $H \cong G$.
\end{itemize} 

For convenience when discussing the three scenarios described above, we adopt the following notation.

\begin{definition}
$T_i(G) = \text{min}\{ \#P \mid P \text{ realizes } G \text{ according to Scenario } i\}$.
\end{definition}

\begin{definition}
$B_i(G) = \text{min}\{ \# \Sigma (P) \mid P \text{ realizes } G \text{ according to Scenario } i\}$.
\end{definition}

 Note that the more restrictive scenarios require that a pot $P$ be specified such that no graphs with fewer vertices than $G$ are in $\mathcal{O}(P)$. To determine the graph on fewest vertices formed from a given pot $P$, it is useful to analyze solutions to the equations that the tile types in $P$ must satisfy. We denote the proportion of tile type $t_i$ used in the assembly process as $r_i$, while the number of times a tile $t_i$ appears in the realization of a graph is denoted as $R_i$.

\begin{definition}\label{matrix} Let $P = \{ t_1,...,t_p \}$ be a pot and let $z_{i,j}$ denote the net number of cohesive-ends of type $a_i$ on tile $t_j$, where un-hatted cohesive-ends are counted positively and hatted cohesive-ends are counted negatively. Then the following system of equations must be satisfied by any connected graph in $\mathcal{O}(P)$:
\begin{eqnarray*} z_{1,1}r_1+z_{1,2}r_2+...+z_{1,p}r_p &=& 0 \\
& \vdots & \\
z_{m,1}r_1+z_{m,2}r_2+...+z_{m,p}r_p &=& 0 \\
r_1 + r_2 + ... + r_p &=& 1 \end{eqnarray*}

The \emph{construction matrix} of $P$, denoted $M(P)$, is the corresponding augmented matrix:

\begin{equation}
M(P) = \begin{bmatrix} \begin{array}{*{20}{cccc|c}}
   {{z_{1,1}}} & {{z_{1,2}}} &  \ldots  & {{z_{1,p}}} & 0  \\
    \vdots  &  \vdots  & {\ddots} &  \vdots  & {} \vdots \\
   {{z_{m,1}}} & {{z_{m,2}}} &  \ldots  & {{z_{m,p}}} & 0  \\
1 & 1 &  \ldots  & 1 & 1  \\
 \end{array} \end{bmatrix}. 
\end{equation}
\end{definition}

\begin{definition} The solution space of the construction matrix of a pot $P$ is called the {\it spectrum} of $P$, and is denoted  $\mathcal S(P)$. \end{definition}

The construction matrix is used primarily to determine whether a given pot satisfies the restrictions of Scenario 2 for a desired target graph. When $M(P)$ has a unique solution $\langle r_1,...,r_p \rangle$ it is easy to see that the smallest order of a graph in $\mathcal{O}(P)$ is the least common denominator of the $r_i$'s.  More generally, we recall the following result from ~\cite{ellis2014minimal}.

\begin{proposition}\label{spectrum}
Let $P=\{t_1, \ldots , t_p\}$ be a pot.  Then:

\begin{enumerate}
    \item If a graph $G$ of order $n$ is realized by $P$ using $R_j$ tiles of type $t_j$, then $\frac{1}{n}\langle R_1, \ldots, R_p\rangle$ is a solution of the construction matrix $M(P)$, i.e. is in $\mathcal{S}(P)$.
    \item If $\langle r_1, \ldots, r_p \rangle \in \mathcal{S}(P)$, and $n$ is a positive integer such that $nr_j \in \mathbb{Z}_{\geq 0}$ for all $j$, then there is a graph of order $n$ in $\mathcal{O}(P)$ that is realized by using $nr_j$ tiles of type $t_j$. 
    \item  The smallest order of a graph in $\mathcal{O}(P)$ is $min \{lcm \{b_j|r_j \neq 0 \text{ and } r_j = a_j/b_j\}\}$ where the minimum is taken over all $\langle r_1,\ldots,r_p\rangle \in \mathcal{S}(P)$ such that $r_j \geq 0$ and $a_j/b_j$ is in reduced form for all $j$.
\end{enumerate}
\end{proposition}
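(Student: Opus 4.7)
The plan is to prove the three parts sequentially, treating the construction matrix $M(P)$ as a bookkeeping device for the pairing of complementary cohesive-ends. The first $m$ rows encode the fact that in any realized graph, every bond-edge of type $a_i$ consumes one un-hatted and one hatted cohesive-end of matching type, while the final row normalizes tile counts to proportions summing to one. All three claims follow from translating between integer tile-counts in a realized graph and rational vectors in the spectrum $\mathcal{S}(P)$.

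For part (1), I would fix a graph $G \in \mathcal{O}(P)$ of order $n$ with assembly design $\lambda$, and for each cohesive-end type $a_i \in \Sigma(P)$ compute the total net count of $a_i$-half-edges across $G$ by summing $z_{i,j} R_j$ over $j$. Every edge of $G$ labeled with $a_i$ contributes exactly one half-edge labeled $a_i$ and one labeled $\hat{a}_i$, so the net count vanishes for each $i$, yielding the first $m$ equations after dividing by $n$; the normalization row is satisfied since $\sum_j R_j = n$. For part (2), I would reverse this construction: given $\langle r_1, \ldots, r_p\rangle \in \mathcal{S}(P)$ and $n$ with $nr_j \in \mathbb{Z}_{\geq 0}$, instantiate $nr_j$ vertices labeled by tile type $t_j$ for each $j$, so the vertex total is $n$. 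The equation $\sum_j z_{i,j}(nr_j) = 0$ then guarantees that for each $a_i$ the number of free un-hatted $a_i$-half-edges equals the number of free hatted ones, so they may be paired arbitrarily into bond-edges, producing a (possibly disconnected) graph $G$ of order $n$ whose assembling pot $P_\lambda(G)$ is contained in $P$, hence $G \in \mathcal{O}(P)$.

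For part (3), I would combine the two directions. By part (1), any graph in $\mathcal{O}(P)$ of order $n$ contributes a non-negative rational point in $\mathcal{S}(P)$ whose denominators all divide $n$, so its order is at least $\mathrm{lcm}\{b_j : r_j \neq 0\}$ for the reduced-form representation $r_j = a_j/b_j$; conversely, for any non-negative spectral solution, applying part (2) with $n = \mathrm{lcm}\{b_j : r_j \neq 0\}$ constructs a graph of exactly that order in $\mathcal{O}(P)$, so the minimum over all non-negative solutions is simultaneously achievable and a lower bound. The main subtlety lies in part (2): one must verify that an arbitrary pairing of free half-edges yields a valid assembly design (which it does by the definition of $\lambda$, since complementary labels are matched by construction), and that disconnectedness of the constructed graph is not an obstruction because $\mathcal{O}(P)$ is defined without a connectivity requirement, and because any disconnected minimizer in part (3) has a connected component of no larger order whose proportions lie in $\mathcal{S}(P)$ as well.
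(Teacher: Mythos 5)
Your proof is correct and is essentially the standard argument: the paper itself states Proposition~\ref{spectrum} without proof, recalling it from \cite{ellis2014minimal}, and your double-counting of hatted versus un-hatted cohesive-ends for part (1), the arbitrary pairing of the matched free half-edges into bond-edges for part (2), and the divisibility/lcm sandwich for part (3) reproduce the proof given in that reference. Your attention to disconnectedness is also well placed: $\mathcal{O}(P)$ as defined here imposes no connectivity requirement, and part (2) can genuinely force disconnected outputs (e.g. $P=\{\{a,\hat{a}\},\{b,\hat{b}\}\}$ with $n=2$ yields only two looped vertices), so declining to claim connectivity and instead passing to a connected component --- itself a complete complex whose tile proportions lie in $\mathcal{S}(P)$ by part (1) --- is exactly the right way to secure part (3).
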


\section{Computational Complexity}\label{sec:prob diff}

Essential to design strategies in Scenarios 2 and 3 is determining what graphs a pot can realize and establishing that nothing smaller than the target graph can be realized by the pot.  Thus, we begin by establishing the computational complexity of two problems, that of determining if a given pot will realize any graph of a given order, and that of determining whether a pot that realizes a given graph can also realize any smaller graph. 

We show that in both cases, the problem is intractable.  These outcomes mean that determining the spectrum of a pot is in general intractable, as is determining if a pot that realizes a targeted graph will also realize incidental smaller complete structures of given size.  Although these are disappointing outcomes from a pragmatic point of view, as with most NP-hard problems, they open a new vista of attractive problems through the need to approach the design objectives now through restricted cases and special classes, such as those explored in this paper.
 \subsection{Intractability of determining the spectrum of a pot}
We first observe that if we ask whether a pot $P$ realizes a complete complex of size $k$, we can let the $r_i$'s in Definition~\ref{matrix} be the number of tiles of each type, instead of the proportion of tiles of each type.  This reduces the question to asking if the following matrix has a solution  $\langle r_1 , \ldots , r_n\rangle$ in $\mathbb{Z}^n$.  

\begin{equation}
M_k(P) = \begin{bmatrix} \begin{array}{*{20}{cccc|c}}
   {{z_{1,1}}} & {{z_{1,2}}} &  \ldots  & {{z_{1,p}}} & 0  \\
    \vdots  &  \vdots  & {\ddots} &  \vdots  & {} \vdots \\
   {{z_{m,1}}} & {{z_{m,2}}} &  \ldots  & {{z_{m,p}}} & 0  \\
1 & 1 &  \ldots  & 1 & k  \\
 \end{array} \end{bmatrix}. 
\end{equation}

Since the entries of $M_k(P)$ are also integers, this is now an integer linear programming (ILP) problem. Integer linear programming is NP-hard in general, but this is a very specific form of an ILP, and thus needs an independent proof of hardness.

The proof we give here uses essential ideas from \cite{55} that were developed there for biomolecular computation, but that here are adapted to our  notation and now used to give explicit proofs of computational complexity.

Here is the decision problem.

\begin{decisionproblem}{Pot Realization Problem (PRP)}
 Given a pot $P$ and a positive integer $k$, does $P$ realize a graph $G$ with $k$ vertices?
\end{decisionproblem}

Thus, the \textsc{PRP} asks if, given a pot $P$ and positive integer $k$, there exists a $k$-vertex graph $G$ and an assembly design $\lambda$ such that $P_{\lambda}(G) \subseteq P$.  
A `yes' instance can be certified simply by checking that each element of $P_{\lambda}(G)$ is in $P$.  Thus, the \textsc{PRP} is in NP.

We will prove that the \textsc{PRP} is NP-complete with a polynomial-time reduction to 3-coloring, namely determining if a graph can be properly colored with three colors.  3-coloring is known to be NP-complete even for simple, 4-regular, planar graphs (see~\cite{Dailey1980}). 

Since determining if a graph of order $k$ is realized by the pot $P$ is equivalent to finding an integer solution to $M_k(P)$, showing that \textsc{PRP} is NP-hard is equivalent to showing that the ILP problem of finding a solution for $M_k(P)$ is NP-hard.

\begin{theorem} \label{PRPhard} The \textsc{PRP} is NP-hard.
\end{theorem}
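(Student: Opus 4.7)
The plan is to prove NP-hardness by polynomial reduction from 3-coloring of simple, 4-regular, planar graphs, which is NP-complete by \cite{Dailey1980}. Given such a graph $H=(V,E)$ with $n=|V|$, I will construct a pot $P_H$ of polynomial size so that $(P_H,n)$ is a yes-instance of the \textsc{PRP} if and only if $H$ admits a proper 3-coloring. Without loss of generality I assume $H$ is connected, since 3-COL on general 4-regular planar graphs reduces componentwise to 3-COL on connected such graphs.

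For the construction, fix an orientation of every edge of $H$. For each edge $e=\{u,v\}$ and each ordered pair of distinct colors $(c_u,c_v)$, introduce one bond-edge type $a_{e,c_u,c_v}$. For each vertex $v\in V$, each color $c\in\{1,2,3\}$, and each function $f:N(v)\to\{1,2,3\}\setminus\{c\}$, include in $P_H$ a $4$-armed tile $t_{v,c,f}$ whose arm for incident edge $e=\{v,u\}$ carries the cohesive-end $a_{e,c,f(u)}$, unhatted or hatted according to the orientation of $e$. This yields $48n$ tile types and $6|E|$ bond-edge types, all constructible in polynomial time. For the forward direction, a proper 3-coloring $\chi$ of $H$ lets us select the tile $t_{v,\chi(v),\chi|_{N(v)}}$ at each $v$; along every edge of $H$ the two selected arms carry complementary cohesive-ends, so this is an assembly design realizing a graph of order $n$.

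The reverse direction is the main content. Suppose $P_H$ realizes some $G$ with $\#V(G)=n$, and let $R_{v,c,f}$ count the occurrences of tile $t_{v,c,f}$. Fix an edge $e=\{u,v\}\in E(H)$: cohesive-end types indexed by $e$ appear only on $u$- and $v$-labeled tiles, and each such tile contributes exactly one arm carrying an $e$-cohesive-end. Summing the complementarity balance over all color pairs yields $\sum_{c,f}R_{u,c,f}=\sum_{c,f}R_{v,c,f}$, i.e.\ the total number of $u$-tiles used equals the total number of $v$-tiles used. Since $H$ is connected, this common count is a constant $k$, and $nk=\#V(G)=n$ forces $k=1$. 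Thus each $v\in V(H)$ is represented by exactly one tile $t_{v,c(v),f_v}$. For each edge $\{u,v\}$ of $H$, matching the unique $u$-arm for $e$ with the unique $v$-arm for $e$ forces $(c(u),f_u(v))=(f_v(u),c(v))$; since $f_u(v)\neq c(u)$ by construction of the tile, we conclude $c(u)\neq c(v)$, so $c$ is a proper 3-coloring of $H$.

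The main obstacle is the reverse direction's need to exclude ``inflated'' realizations in which some vertices of $H$ contribute multiple tile copies while others contribute none. The edge-balance argument combined with connectivity of $H$ resolves this cleanly, but it relies critically on each tile carrying exactly one arm per incident edge of $H$ and on the ability to restrict the 3-COL instance to connected inputs. Once this is handled, the reduction is polynomial, and together with the already-noted membership in NP via the certificate $\lambda$, we obtain the NP-hardness (indeed NP-completeness) of the \textsc{PRP}.
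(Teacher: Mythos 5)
Your proposal is correct, and it reduces from the same source problem as the paper (3-coloring of simple, connected, 4-regular planar graphs, citing Dailey), but both the gadget and, more importantly, the reverse direction take a genuinely different route. The paper uses a small pot --- three 4-armed tiles per vertex and six 2-armed tiles per edge, with arms labeled by triples (vertex, edge, color) --- targets order $m+n$, and must then prove that any realized graph of that order is isomorphic to the subdivision $G'$ of $G$: surjectivity of a vertex map via connectedness and alternating 2-/4-armed tile sequences, bijectivity from equal order, then a homomorphism and degree-sequence argument, before finally extracting the coloring. You instead bake the edge name \emph{and both endpoint colors} into the bond-edge types, use only vertex tiles ($48$ per vertex) that guess all neighbor colors, target order $n$, and replace the entire isomorphism analysis with a conservation argument: since every tile for $u$ carries exactly one $e$-arm for each edge $e$ incident to $u$ (using that $H$ is simple), complementarity forces the number of $u$-tiles to equal the number of $v$-tiles across each edge $\{u,v\}$, so by connectivity of $H$ the per-vertex multiplicity is a constant $k$ with $nk=n$, hence exactly one tile per vertex, after which edge-matching directly yields a proper coloring. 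This is shorter and entirely avoids graph-isomorphism bookkeeping; what the paper's heavier route buys is the explicit structural conclusion $F \cong G'$, machinery that is reused nearly verbatim in the subsequent \textsc{SRP} theorem (via loop-augmented versions of the same tiles), whereas your counting argument would need re-engineering there. One small repair is needed in your write-up: as literally stated, the arm at the head of an oriented edge $e=(u \to v)$ carries $\widehat{a_{e,c,f(u)}}$, and matching that against the tail's $a_{e,c(u),f_u(v)}$ would force \emph{equal} endpoint colors; the convention your own matching equation $(c(u),f_u(v))=(f_v(u),c(v))$ presupposes is that the head arm carries $\widehat{a_{e,f(u),c}}$, with the ordered pair always read as (tail color, head color). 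With that indexing made explicit, the reduction is sound and polynomial, and together with membership in NP it gives NP-completeness.
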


\begin{proof}
Assume we are given a simple, connected, 4-regular, planar graph $G$ with $m$ edges and $n$ vertices, as well as three colors denoted $r$, $b$, and $g$.  

We construct a pot $P$ as follows.  We begin by labeling the vertices of $G$, say with uppercase letters (or some sufficiently large set of symbols), and also labeling the edges of $G$, say with lowercase letters (or some sufficiently large set of symbols disjoint from those labeling the vertices). As usual, we identify each vertex and edge with their labels. See Figure \ref{LabeledGraph}. 
\begin{figure}
  \centering 
  
  \includegraphics[clip, trim=0.5cm 18cm 0.5cm 2cm, width=0.95\textwidth]{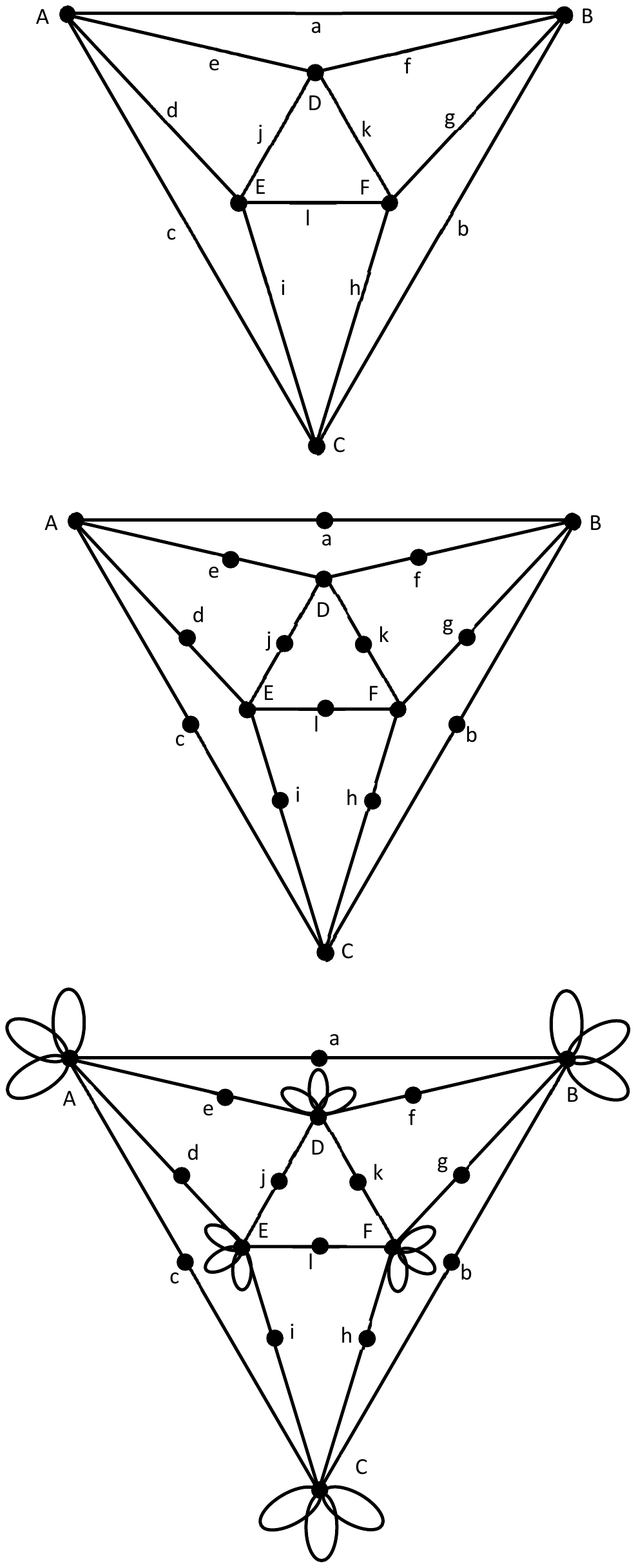}
  \caption{A 4-regular plane graph with labeled vertices and edges.}
  \label{LabeledGraph}
  \end{figure} 
  
For each vertex, we create three 4-armed tiles, one for each color.  Each arm is labeled by the triple of the vertex label, the label of one of the edges incident with the vertex, and a color label, as in Figure~\ref{VertexTilesColored}. 
\begin{figure}
    \centering \includegraphics[width=4.5in]{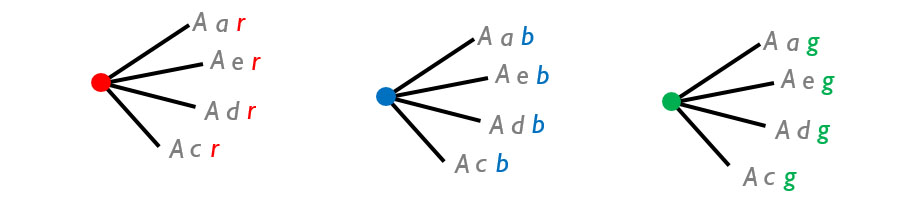} \caption{The three tiles for vertex $A$, one for each color.} 
    \label{VertexTilesColored}
    \end{figure} 
    
This can clearly be done in linear time in the number of vertices.  Then, for each edge, we create six 2-armed tiles.  For each arm, we construct a triple consisting of: the edge label; a vertex label so that the two arms on a tile receive the labels of the two vertices the edge is incident to; and a color label so that the two arms receive different color labels, and so that the six tiles receive all six of the possible ways to color the two ends differently. We then take the `hat' of each triple to label the arms of the 2-armed tiles, as in Figure~\ref{EdgeTilesColored}.   
 \begin{figure}
     \centering \includegraphics[width=3.5in]{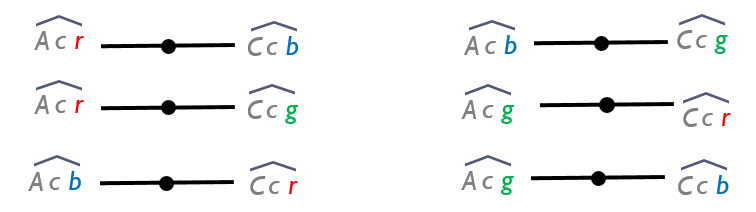} \caption{The six tiles for edge $c$, corresponding to all possible proper coloring of the endpoints of the edge.}
     \label{EdgeTilesColored}
     \end{figure}

Creating these tiles can be done in linear time in the number of edges.

Note that any 2-armed tile can only join two 4-armed tiles with different colors, as in Figure~\ref{ColoredVE}.
\begin{figure}
      \centering \includegraphics[width=4in]{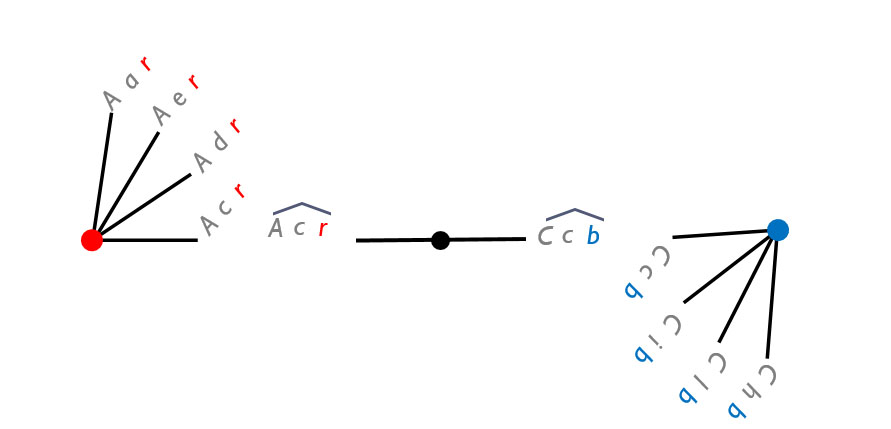} \caption{Assembly attaching vertices $A$ and $C$ with proper coloring at the edge $c$.}
      \label{ColoredVE}
      \end{figure}
We now show that a complete complex of size $k=m+n$ is realized by the pot $P$ if and only if $G$ is 3-colorable.

Suppose the graph $G$ is 3-colorable.  We begin by giving $G$ a proper coloring.  We then let $G'$ be the graph on $m+n$ vertices that results from subdividing each edge of $G$, i.e. inserting a vertex of degree two in the middle of each edge, as in Figure~\ref{subdiv}. Note that there is a proper coloring of $G$ if and only if there is a coloring of the vertices of degree four in $G'$ so that no two vertices of degree four are joined by a path of length two.

\begin{figure}
  \centering   
  \includegraphics[clip, trim=0.5cm 10cm 0.5cm 9.75cm, width=0.95\textwidth]{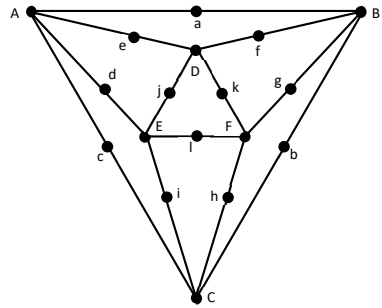}
  \caption{The graph $G'$ created by subdividing the edges of $G$.}
  \label{subdiv}
  \end{figure} 
  
For each vertex of degree four in  $G'$, we reuse the uppercase label of the corresponding vertex in $G$.  When there is an edge labeled $e$ in $G$, we also use the label $e$ for the vertex of $G'$ that arises from subdividing the edge $e$ of $G$.  Suppose that $\gamma = (e,A)$ is an edge of $G'$ (and every edge of $G'$ has this form for some edge label $e$ and vertex label $A$ from the original graph $G$ ), so that the two half-edges are $(e,\gamma)$ and $(A,\gamma)$.  Suppose further that   $A$ has color $x$ in the coloring of $G$.  Then $\lambda: (e,\gamma) \rightarrow  \widehat{Aex}$ and $\lambda: (A, \gamma) \rightarrow  Aex$ gives an assembly design for $G'$. 

With this assembly design, if $A$ is a vertex of $G$ with incident edges $e_1, e_2, e_3, e_4$ and  with  color $x$ in the coloring of $G$, then when we view $A$ as a vertex of $G'$ we have that $\lambda(A) = \{ Ae_1x, Ae_2x, Ae_3x, Ae_4x\} $ which is an element of $P$.  By construction, if $e$ is vertex of degree two in $G'$, then $\lambda(e) \in P$.   Thus $G'$, a graph of order $k$, is realized by $P$. 

 Thus, if $G$ is 3-colorable, then $P$ realizes a complete complex of size $k$, namely $G'$.

Now suppose $P$ realizes a graph $F$ with $k=m+n$ vertices. We claim that $F$ is isomorphic to $G'$. Recall that a function $f$ is a graph homomorphism from $G'$ to $G$ if $(u,v) \in E(G') \implies (f(u), f(v)) \in E(G)$.

Let $\mu$ be an assembly design for $F$ with image in $P$. Then for any vertex $v \in F$, the arm labels on $\mu(v)$ identify a vertex label of $G$, and hence of $G'$.  We define $s:V(F) \rightarrow V(G')$ by mapping $v \in V(F) $ to the unique vertex of $G'$ with the label identified by $\mu(v)$.   We see that the map $s$ is surjective as follows.  If $v$ is any vertex of $F$ and $w'$ is any vertex of $G'$, then there must be a sequence of tiles in the assembly design of $G'$ from  $\mu(v)$ to some tile with arms labeled by the vertex label of $w'$.  This follows because $G$ is connected, so there is a path from $v'=s(v)$ to $w'$ in $G'$.  Because $F$ is is a complete complex, in $P_{\mu}(F)$ there must be a sequence of tiles, alternating between 2- and 4- armed tiles, with labels identifying, respectively, the edges and vertices along this path, irrespective of the colors on the labels.  Since the terminus of this sequence is a tile with label identifying the label of $w'$, then there is a vertex of $F$ that $s$ maps to $w$.  Thus $s$ is surjective.  Since $s$ is surjective, and $F$ is the same order as $G'$, it follows that $s$ is a bijection.

To see that $s$ is a graph homomorphism, note that if $(v, w)$ is an edge of $F$, then necessarily one of $v$ or $w$ is of degree two and the other is of degree four.  Say $v$ has degree two and $w$ has degree four.     Further suppose that $s(v) =v'$, where $v'$ has label $e$ corresponding to an edge $e$ in $G$, and $s(w)=w'$  where $w'$ has label $A$ corresponding to a vertex label in $G$.  Since $v$ and $w$ are adjacent in $F$, the tiles they map to in the assembly design have arms with labels of the form $\widehat{Axe}$  (for $v$) and $Axe$    (for $w$), for some color $x$.  But since $e$ is an edge of $G$, it also labels the corresponding vertex $v$ of $G'$. Thus $(s(v), s(w))= (v', w')$ is an edge of $G'$.

Since $s$ is a bijection between the vertex sets that preserves vertex degrees, it follows that $F$ and $G'$ have the same degree sequences, which means that they have the same number of edges.  Thus, $s$ is also a bijection on the edge sets, and hence an isomorphism.

The assembly of $F$ produces a color label for each vertex of degree four in $F$, and the bijection $s$ then assigns these colors to each vertex of degree four in $G'$.  However, each 2-armed tile in $P$ can only join two 4-armed tiles with different color labels.  Thus this coloring of of $G'$ is such that no two vertices with the same color label are joined by a path of length two. This in turn gives a proper 3-coloring of $G$. 

Thus, if $P$ realizes a complete complex of size $k$, then $G$ is 3-colorable.

This completes the reduction of the PRP to 3-coloring, and hence the proof.
\end{proof}
 Since we cannot determine if a pot realizes a graph of a given size, we cannot in general determine its spectrum, hence the following corollary.

\begin{corollary}Determining the set $\mathcal{O}(P)$ is NP-hard.  \end{corollary}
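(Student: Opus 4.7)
The plan is to observe that the corollary follows by a direct polynomial-time Turing reduction from the PRP, shown to be NP-hard in Theorem \ref{PRPhard}, to the problem of determining $\mathcal{O}(P)$. By definition, an instance $(P,k)$ of the PRP is a yes-instance if and only if there exists a pair $(G,\lambda)\in\mathcal{O}(P)$ with $\#V(G)=k$. Consequently, any procedure that outputs $\mathcal{O}(P)$ in a usable form (or even merely supports the query ``does $\mathcal{O}(P)$ contain a graph of a given order $k$?'') immediately yields a procedure for solving the PRP with only polynomial overhead: construct $\mathcal{O}(P)$ from $P$ and then check whether the requested order appears among its elements. Hence a polynomial-time algorithm for determining $\mathcal{O}(P)$ would give one for the PRP, contradicting Theorem \ref{PRPhard} unless $\mathrm{P}=\mathrm{NP}$.

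The only subtlety, and the point to handle carefully, is that $\mathcal{O}(P)$ is in general an infinite set of pairs $(G,\lambda)$ (for instance, if $P$ realizes any graph at all, then by Proposition~\ref{spectrum} it realizes infinitely many by taking integer multiples of spectrum vectors). Thus one must specify what ``determining'' the set means. The natural computational interpretation is a finite description of $\mathcal{O}(P)$ from which order-membership questions can be decided in polynomial time; for example, the spectrum $\mathcal{S}(P)$ together with the assembly designs realizing minimal-order graphs. Under any such reasonable interpretation, the reduction above is immediate, since the order $k$ is a valid query. The proof therefore reduces to citing Theorem~\ref{PRPhard} once the model of computation is fixed, with no further combinatorial work required.
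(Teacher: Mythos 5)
Your proposal is correct and follows essentially the same route as the paper, which derives the corollary immediately from Theorem~\ref{PRPhard} via the observation that determining $\mathcal{O}(P)$ would in particular answer whether $P$ realizes a graph of a given order. Your additional care about what ``determining'' an infinite set means is a reasonable elaboration of a point the paper leaves implicit, but it does not change the argument.
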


\subsection{Intractability of preventing substructures}

The following question is both pragmatically relevant and more subtle than Theorem~\ref{PRPhard}. Given a graph $G$ and a pot $P$ that realizes $G$, for example $P=P_{\lambda}(G)$ for some design $\lambda$, will the pot $P$ realize any other graphs of the same order or smaller than $G$?  This question arises when there is a design for the target structure, but if smaller structures can form from the pot, they may consume the tiles more efficiently than larger structures such as the target, and thus reduce yield.  Also, if undesired complete complexes the same size as the target are realized by the pot, it may be problematic to distinguish the target from the output of the experiment.  

We begin with the first question, namely establishing the computational complexity of determining if an assembling pot $P_{\lambda}(G)$ arising from an assembly design $\lambda$ for a graph $G$ can realize any graphs strictly smaller than $G$.  This question can also be formulated as a sort of ILP problem, albeit a somewhat unusual one.  It asks:  Given that an integer solution to $M_k(P)$ exists, does there exist an integer solution to $M_j(P)$ for $j<k$?

We show once again that this problem is intractable. Here is the decision problem. 
\begin{decisionproblem}{Substructure Realization Problem (SRP)}
 Given an assembling pot $P_{\lambda}(H)$ for a graph $H$ of order $n$ and an integer $m$ with $0<m<n$, does $P_{\lambda}(H)$ realize a graph $F$ with $m$ vertices?
\end{decisionproblem}

Again, a `yes' instance can be certified simply by checking that each element of a pot $P_{\mu}(F)$ from an assembly design $\mu$ for a graph $F$ with order less than $n$ is in $P_{\lambda}(H)$.  Thus, the \textsc{SRP} is in NP.

We will prove that the \textsc{SRP} is NP-complete here as well with a polynomial-time reduction to 3-coloring. 

\begin{theorem}
The \textsc{SRP} is NP-hard.
\end{theorem}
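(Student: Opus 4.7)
The plan is to reduce from 3-coloring on simple, connected, 4-regular, planar graphs, reusing the pot construction from the proof of Theorem~\ref{PRPhard} but now packaged as an assembling pot. Given such a graph $G$ with $n_G$ vertices and $e_G$ edges, I build exactly the pot $P$ of Theorem~\ref{PRPhard}: three 4-armed vertex tiles for each $v\in V(G)$ (one per color), and six 2-armed edge tiles for each $e\in E(G)$ (one per ordered pair of distinct colors on its endpoints). By the proof of Theorem~\ref{PRPhard}, $P$ realizes a graph of order exactly $n_G+e_G$ if and only if $G$ is 3-colorable.

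The crucial step is to present $P$ as $P_\lambda(H)$ for some graph $H$ whose order strictly exceeds $n_G+e_G$. I take $H$ to consist of two degree-$4$ vertices for each vertex-tile type of $P$ and one degree-$2$ vertex for each edge-tile type, giving $|V(H)|=6(n_G+e_G)$. A brief arm count shows the assembly is consistent: at any $v\in V(G)$, for each incident edge $e$ and each color $x$, the vertex-tile copies supply exactly two arms labeled $vex$, while the edge tiles supply exactly two half-edges labeled $\widehat{vex}$ (one on each of the two edge tiles $e^{(x,y)}$ with $y\neq x$). Pairing these complementary half-edges in any consistent way produces a graph $H$ and an assembly design $\lambda$ with $P_\lambda(H)=P$.

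Outputting the SRP instance $\bigl(P_\lambda(H),\,n_G+e_G\bigr)$ then completes the reduction. It is a valid SRP instance since $0<n_G+e_G<6(n_G+e_G)=|V(H)|$, and by Theorem~\ref{PRPhard} the pot $P_\lambda(H)=P$ realizes a graph of order exactly $n_G+e_G$ if and only if $G$ is 3-colorable. The construction has size polynomial in $n_G$, so SRP is NP-hard.

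The main obstacle I expect is the bookkeeping needed to exhibit $\lambda$ concretely: one must describe the arm-pairing explicitly and verify both that every half-edge is matched to its complement (so that $H$ is a legitimate graph) and that every tile of $P$ appears at least once with no extraneous tiles (so that $P_\lambda(H)=P$ on the nose). The arm count above makes it clear this is possible, but the notation for indexing the vertex-tile copies and the choices of partner at each edge can become unwieldy and must be handled carefully.
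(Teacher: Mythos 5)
Your proposal is correct, and it takes a genuinely different route from the paper. The paper does not reuse the \textsc{PRP} pot as-is: it manufactures new 10-armed tiles (the 4-armed colored vertex tiles of Theorem~\ref{PRPhard} augmented by three pairs of self-complementary arms) and a new host graph $H$ built from vertex ``blow ups'' joined by six subdivided edges, and then it re-runs the entire bijection/homomorphism argument to show that this modified pot realizes the looped subdivision $G''$ of order $3k$ if and only if $G$ is 3-colorable --- including a new case analysis showing that any bond between two degree-ten tiles must be a loop. You instead keep the \textsc{PRP} pot verbatim and solve the only genuinely new problem --- exhibiting that pot as $P_\lambda(H)$ for some concrete $H$ --- by taking two copies of each colored vertex tile and one copy of each edge tile; your arm count (two arms labeled $Aex$ against the two arms labeled $\widehat{Aex}$ on the tiles $e^{(x,y)}$, $y\neq x$) is exactly right, so a perfect matching of complementary half-edges exists and $P_\lambda(H)=P$ on the nose, with $\#V(H)=6(n_G+e_G)$ and $m=n_G+e_G$. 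The equivalence then follows wholesale from Theorem~\ref{PRPhard}, with no second isomorphism argument. What each approach buys: yours is shorter and cleanly decouples ``$P$ is an assembling pot of \emph{something}'' from the 3-colorability equivalence; the paper's gadget construction produces a \emph{connected} host $H$ (blow-ups are internally linked by the complementary color-pair arms, and $G$ is connected) and a connected small graph $G''$, so its hardness conclusion survives even under the convention, natural for laboratory complexes, that all graphs involved are connected. Your $H$ need not be connected under an arbitrary choice of arm-pairing --- as the problem is stated no connectivity of $H$ is required, so this costs you nothing formally, but if you wanted to match the paper's robustness you would need to argue the pairing can be chosen to make $H$ connected, which is plausible but is precisely the bookkeeping you flagged and did not carry out.
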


\begin{proof}
We show that the SRP is hard even in the special case that $n=8k$ and $m=3k$. 

The proof is in two steps.  In the first step, starting with a 4-regular planar graph $G$ of order $k$, we construct a  graph $H$ of order $8k$ in polynomial time.  We then find an assembling pot $P_{\lambda}(H)$ for it, also in polynomial time.    In the second step we show that the pot $P_{\lambda}(H)$ realizes a graph of order $3k$ if and only if $G$ is 3-colorable. 

\emph{Step 1.}  Let $G$ be a simple, connected, 4-regular, planar graph with $k$ vertices (and hence necessarily $2k$ edges).  We again label the vertices of $G$ by (for example) uppercase letters, and the edges by (for example) lower case letters.  Let $G''$ be the graph that results from subdividing all the edges of $G$ using vertices of degree two, and adding three loops to each vertex of degree four, as in Figure~\ref{looped}.

\begin{figure}
  \centering   
  \includegraphics[clip, trim=0.5cm 0cm 0.5cm 18cm, width=0.95\textwidth]{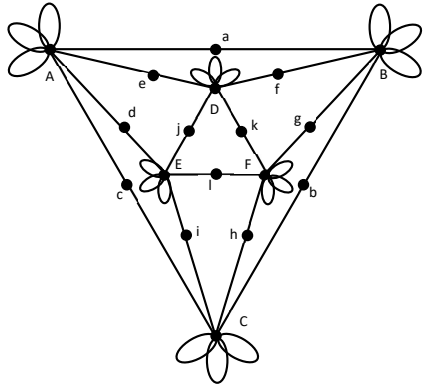}
  \caption{The graph $G''$ created by subdividing the edges of $G$ and adding three loops to each vertex of degree four.}
  \label{looped}
  \end{figure} 
  
Note $G''$ has $3k$ vertices and that a proper coloring of $G$ corresponds to a coloring of the degree ten vertices of $G''$ so that there is no path of length two between vertices of the same color.  We label the degree ten vertices of $G''$ by the corresponding uppercase letter from $G$, and the degree two vertices by the lower case letter corresponding to the subdivided edge from $G$. 

 We then create another new graph, $H$, also derived from $G$.  We begin by replacing each vertex of $G$ by the vertex `blow up' shown in Figure \ref{blowup}.  
 
 \begin{figure}
  \centering   
  \includegraphics[clip, trim=1.5cm 20cm 2.5cm 0cm, width=1\textwidth]{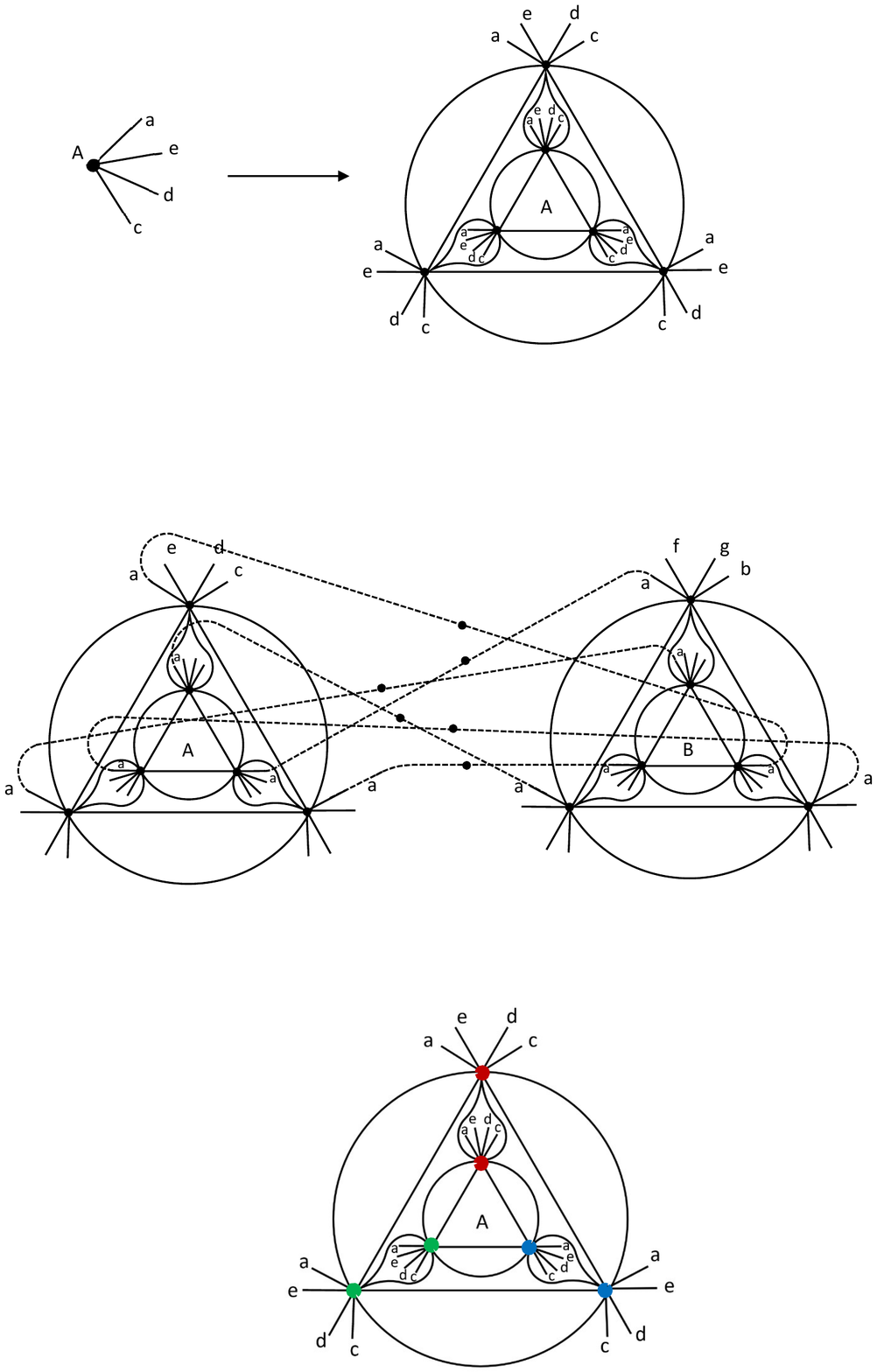}
  \caption{The `blow up' of a vertex of $G$.}
  \label{blowup}
  \end{figure}

 When there is an edge $e$ in $G$, we add six edges between the `blow ups' corresponding to the end points of $e$.  We then let the graph $H$ be the result of subdividing each each of these edges as in Figure~\ref{sixedges}. 
 
 \begin{figure}
  \centering   
  \includegraphics[clip, trim=1.5cm 10cm 2.5cm 10cm, width=1\textwidth]{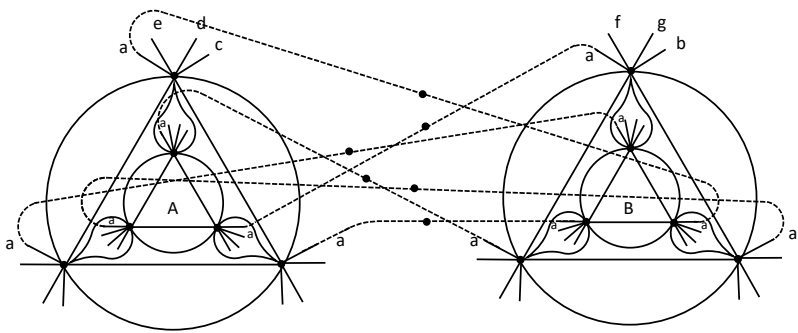}
  \caption{The six subdivided edges between the blow up of a vertex labeled A in $G$ and a blow up of a vertex labeled B.}
  \label{sixedges}
  \end{figure}

 $H$ thus has $8k$ vertices of degrees either two or ten.   
 
 The creation of $H$ clearly may be accomplished in polynomial time in the number of vertices of $G$.
 
 Then, for each vertex of degree ten, we create three 10-armed tiles, one for each color.  Four of the arms are labeled as in Theorem \ref{PRPhard}, namely by the triple of the vertex label from the original graph $G$,  the label of one of the edges incident with the vertex, and a color label.  Then, for the remaining six arms, for a tile with color label $r$, we label three of them by the original vertex label, then $br$, $gr$ or $rr$.  The other three arms receive the hatted versions of these labels.   For a tile with color label $b$, three of the six additional arms are again labeled by the original vertex label, and then by $br$, $bg$, and $bb$, and the other three by the hatted versions.  For a vertex with color label $g$, we use $gr$, $bg$, and $gg$.  See Figure~\ref{Htiles}.
  This can clearly be done in linear time in the number of vertices.   Note that the tiles here are similar to those for Theorem~\ref{PRPhard}, with the addition of three pairs of self-complementary arms added to the 4-armed tiles in Theorem~\ref{PRPhard}.   

\begin{figure}
  \centering   
  \includegraphics[clip, trim=1.5cm 2cm 2.5cm 19cm, width=1\textwidth]{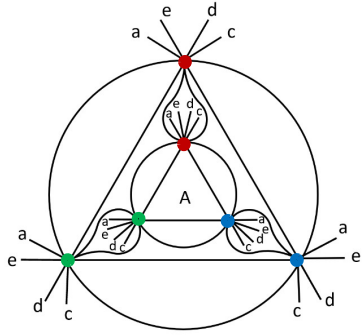}
  \caption{A blow up of a vertex $A$ incident to edges $a,e,d,c$ in $G$.  The tile corresponding to a red vertex is $\{Aar, Aer, Adr, Acr, Arr, \widehat{Arr}, Abr, \widehat{Abr}, Agr, \widehat{Agr}\}$.  The tile corresponding to a green vertex is $\{Aag, Aeg, Adg, Acg, Agg, \widehat{Agg}, Agr, \widehat{Agr}, Abg, \widehat{Abg}\}$.   The tile corresponding to a blue vertex is $\{Aab, Aeb, Adb, Acb, Abb, \widehat{Abb}, Abr, \widehat{Abr}, Abg, \widehat{Abg}\}$.}
  \label{Htiles}
  \end{figure} 

  Then, for each vertex of degree two, we create six 2-armed tiles, exactly as in Theorem~\ref{PRPhard} and Figure~\ref{EdgeTilesColored}, namely a    triple consisting of: the edge label; a vertex label so that the two arms on a tile receive the labels of the two vertices the edge is incident to; and a color label so that the two arms receive different color labels. We then take the `hat' of each triple.  Creating these tiles can be done in linear time in the number of edges.
 This establishes the assembling pot $P=P_{\lambda}(H)$.
 
\bigskip 

\emph{Step 2.}  
We now show that $P$ realizes a graph of order $m = 3k$, in particular the graph $G''$, if and only if the original graph $G$ is 3-colorable.

\emph{Part 1.  $G$ 3-colorable implies $P$ realizes $G''$.} If $G$ is 3-colorable, then we see that $P$ realizes $G''$ as follows. First fix a 3-coloring of $G$.  Suppose $A$ is a vertex with color $c$ in the labeling and coloring of $G$, and further suppose $A$ is incident with edges $e_1, e_2, e_3, e_4$.  Then an assembly design for $G''$ maps the corresponding vertex  $A$ in $G''$ to the tile  $\{Ae_1c, Ae_2c, Ae_3c, Ae_4c, Acc, \widehat{Acc}, Acc_1, \widehat{Acc_1}, Acc_2, \widehat{Acc_2}\}$.  Here $c_1$ and $c_2$ are the two colors other than $c$, and we reorder $c$ and $c_i$ in $Vcc_i$ as needed to be in alphabetical order.   

A vertex, labeled say $a$, of degree two in $G''$ is part of a path of length two between two vertices of degree ten labeled, say $A$ and $B$.  Suppose the corresponding vertices in $G$ are colored $c_A$ and $c_B$.  Then the assembly map assigns to the vertex $a$ the tile $\{\widehat{Aac_A}, \widehat{Bac_B}\}$.  

Since the three pairs of complementary arms can form loops on the 10-armed tiles, these tiles can behave as looped versions of the 4-armed tiles in Theorem~\ref{PRPhard}.  $G$ is 3-colorable by assumption, and so we have an assembly design for $G''$ as in Theorem~\ref{PRPhard}.  Since each of these tiles is in the pot $P$, the pot $P$ does realize $G''$ and hence a graph of order $3k$. 

\bigskip

\emph{Part 2. $P$ realizes $G''$ implies $G$ is 3-colorable.} Now assume $P$ realizes a graph of order $3k$, say $F$.   We claim that $F$ is isomorphic to $G''$.  

We let $\mu$ be an assembly design for $F$ with image in $P$.  We define a map $s: F \rightarrow G''$ exactly as in Theorem~\ref{PRPhard}.  The proof that $s$ is a bijection between the vertex sets follows \emph{mutis mutandis} from the proof in Theorem~\ref{PRPhard}.   Again, if we show that $s$ is a homomorphism, then since the degree sequences of $F$ and $G''$ are the same, $s$ must be an isomorphism.

For an edge of $F$ between a vertex of degree two and a vertex of degree ten, the proof that it maps to an edge of $G''$ is exactly the same as in Theorem~\ref{PRPhard}. 

Thus to complete the proof that $s$ is an isomoprhism it remains only to show that if $f$ is an edge in $F$ between two vertices of degree ten, then it maps to some edge of $G''$.  However, the only arms of 10-armed tiles that can bond to arms of another 10-armed tile have the same vertex labels (e.g. $Arr$ and $\widehat{Arr}$).  It follows that if $f$ is an edge in $F$ between two vertices of degree ten, then $s$ maps its endpoints  to the same vertex in $G''$, and thus $s$ maps $f$ to a loop on that vertex.

Thus, $P$ realizes $G''$.

However, since the three self-complementary arms on the 10-armed tiles must form loops in realizing $G''$, these tiles perform effectively as the 4-armed tiles in Theorem~\ref{PRPhard}, with two of them connected by a path of length two only if they have different colors.  Thus, if $P$ assembles $G''$ then $G''$ has no path of length two between vertices of the same color, and this exactly corresponds to a proper coloring of $G$. 
\end{proof}

We leave open the computational complexity of determining if an assembling pot for a graph $G$ can also realize a graph $H$ the same order as $G$ but not isomorphic to $G$, but conjecture that this problem is NP-hard as well.

Note that having a unique solution to the matrix of the pot does not fully resolve this question, since it is possible that the same proportions of tiles that create the target $G$ also create some other graph not isomorphic to $G$. (See for example the  assembly design for the complete graph given in Scenario 2.)
 
 Since both the PRP and the SRP are NP-hard, the design challenges now fall to finding algorithms and optimal solutions for specific situations. In the next section we provide a program for special forms of the construction matrix, and   sections following give explicit solutions for selected graphs that illustrate both general techniques and specific design difficulties. 
 
\section{Construction Matrices with Limited Degrees of Freedom}\label{sec:code}
 Although not true in general, in practice the construction matrix often has limited degrees of freedom. We leverage both this phenomenon and the special form of the construction matrix to providing a program that solves the ILP in the case of only a few degrees of freedom.
 
While rank$(M(P))-\#P$ can be arbitrarily large, the pots of tiles for the graph classes in Scenario 2 in \cite{ellis2014minimal} satisfied the property that $B_2(G) + 1 = T_2(G)$. That is, rank$(M(P)) - \# P = 0$ for each corresponding construction matrix $M(P)$, and thus the construction matrix readily verifies that no smaller graphs may be realized by the same pot of tiles.
For more complex graphs, especially lattice graphs which grow in multiple dimensions,  sometimes  rank$(M(P)) - \#P > 0$. When this occurs, the spectrum of the pot is a set of solutions with at least one free variable.  

Our program produces an output when the construction matrix contains at most two degrees of freedom. The program can readily be adapted for higher degrees of freedom, but with increasing run-time costs.  The program can be found in \cite{MapleProgram}. 

To make use of this software, a user can input the order of a target graph and the construction matrix corresponding to a pot that realizes the target graph. The output of the program will indicate if a smaller graph or graph of the same order may be created from the pot of tiles. If so, the output will contain the proportions of tile types to produce such a graph. If the solution to the construction matrix has more than two degrees of freedom, a message informs the user of this and the program terminates.

\subsection{Algorithm description}
Let $P = \{t_1, \ldots, t_p\}$ be a pot.
Recall, if $\langle r_1, \ldots, r_p \rangle \in \mathcal{S}(P)$, and if there exists $n \in \mathbb{Z}^+$ such that $nr_j \in \mathbb{Z}_{\geq 0}$ for all $j$, then there exists a graph $G \in \mathcal{O}(P)$ such that $\#V(G)=n$. In general, although a solution set with at least one free variable contains infinitely many solutions, we are only interested in solutions that describe proportions of tile types. This means for each $i$, $r_i \in \mathbb{Q}$ and $0 \leq r_i \leq 1$.  The program exploits both the restriction on the $r_i$ and an upper bound provided by the user input to test finitely many integers for $n$. In the event of one or two degrees of freedom, the program follows  mathematical processes to find possible values for the free variable(s) in which $nr_i$ is a nonnegative integer for some nonconstant $r_i$. The finitely many possibilities for the free variable(s) and corresponding values for $n$ are then checked against the remaining equations since $nr_j$ must be a nonnegative integer simultaneously for every $j$.

\subsection{Algorithm Summary}\label{sec:program}
\noindent \textbf{Input:} The construction matrix, $M(P)$, of a pot and an integer, $m$ (the order of the target graph).\\

\noindent \textbf{Output: }
\begin{enumerate}
    \item If the number of degrees of freedom of $M(P)$ is 0, then the output is the unique solution $\langle r_1, \ldots, r_p \rangle$ and the smallest positive integer $n \leq m$ such that $nr_j \in \mathbb{Z}_{\geq 0}$ for all $j$.
    \item If the number of degrees of freedom of $M(P)$ is 1 or 2, then the output is all solutions $\langle r_1, \ldots, r_p \rangle$ and corresponding positive integers $n \leq m$ such that $nr_j \in \mathbb{Z}_{\geq 0}$ for all $j$.
    \item If the number of degrees of freedom of $M(P)$ is 3 or more, then a message is displayed stating as much.
\end{enumerate}

\noindent \textbf{Algorithm:}
\begin{enumerate}
\item Compute $RREF(M(P))$ and calculate the number of free variables.
    \item If there are 0 free variables, calculate the LCD of the solution vector $ \langle r_1, \ldots, r_p \rangle$; return the solution vector if LCD $\leq m$ and a message else.
     \item If there is 1 free variable:
        \begin{enumerate}
            \item Express solution to system of equations in parametric form $r_i(t) = a_it + c_i$ where $a_i, c_i \in \mathbb{Q}$.
           
            \item For each constant equation, $r_i(t)$, find $N_i = \{n_{i_j} \in \mathbb{Z} \: | \: 1 \leq n_{i_j} \leq m \text{ and } n_{i_j} \cdot r_i(t) \in \mathbb{Z}_{\geq 0}  \}$ and let $S = \displaystyle \bigcap_i N_i$.
          
            \item Let $r_k(t)$ be the first nonconstant equation. For each $n \in S$ and for each integer $y$ where $1 \leq y \leq m$, solve the equation $n \cdot r_k(t) = y$. If $t \in [0,1]$ (necessary since $t$ is a proportion), then store the pairs $n$ and $t$ in lists $N$ and $T$, respectively.
            
            \item For each  pair $n \in N$ and $t \in T$, check if $n \cdot r_i(t) \in \mathbb{Z}_{\geq 0}$ for all $i > k$. Only values of $n$ and $t$ which satisfy this condition are saved for the next iteration.
            \item For each $t \in T$, compute the solution vector $\langle r_1, \ldots, r_p \rangle$. Output the solution vector and the corresponding value of $n$. 
        \end{enumerate}
    \item If there are 2 free variables, the process is similar.
        \begin{enumerate}
          
            \item Express solutions to linear system in parametric form as $r_i(t,u) = a_i t + b_i u + c_i$ where $a_i, b_i, c_i \in \mathbb{Q}$.
         
            \item For each constant equation, find $N_i = \{n_{i_j} \in \mathbb{Z} \: | \: 1 \leq n_{i_j} \leq m \text{ and } n_{i_j} \cdot r_i(t) \in \mathbb{Z}_{\geq 0} \}$ and let $S = \displaystyle \bigcap_i N_i$.
            \item Let $r_k(t,u)$ and $r_{k+1}(t,u)$ be the first two nonconstant equations. For each $n \in S$ and for every pair of integers $y_1, y_2$ such that $1 \leq y_1, y_2 \leq m$, solve the system of equations $n \cdot r_k(t,u) = y_1$ and $n \cdot r_{k+1}(t,u) = y_2$. If $(t,u) \in [0,1] \times [0,1]$, then store the triples $n, t, u$ in the lists $N, T, U$, respectively. 
            \item For each corresponding triple $n \in N, t \in T, u \in U$, check if $n \cdot r_i(t,u) \in \mathbb{Z}_{\geq 0}$ for each $i > k+1$. Only triples $(n,t,u)$ which satisfy this condition are stored for the next iteration.
            \item For each $(t,u) \in T \times U$, compute the solution vector $\langle r_1, \ldots, r_p \rangle$. Output the solution vector and the corresponding value of $n$. 
        \end{enumerate}
\end{enumerate}

This algorithm can clearly be extended to higher degrees of freedom, but with increasingly greater runtime costs.

\section{Selected Results with Proofs}\label{sec:SelectedResults}
 In this section we include a collection of results illustrating the difficulty of finding optimal pots for selected graphs  in each of the three restrictive scenarios.  Each of the examples demonstrates one or another particular design challenge.  We state and prove optimal pots for the cube graph  for all three scenarios in Section \ref{sec:cube}. Scenario 3 requires several preliminary lemmas, and the final outcome demonstrates the need for two different pots to realize the minimum number of tiles and the minimum number of bond-edge types. This answers a question posed in \cite{ellis2014minimal} in proving that it is not always possible to achieve both $B_3$ and $T_3$ in the same pot. In Section \ref{sec:squarelattice}, we provide Scenario 1 solutions for square lattices of any size, as well as an example of a pot $P$ realizing a small square lattice in which two different solutions from $\mathcal{S}(P)$ can realize isomorphic graphs and one solution from $\mathcal{S}(P)$ can realize two non-isomorphic graphs. This example demonstrates the utility of our \emph{Maple} code in achieving optimal designs. In Section \ref{sec:trianglelattice} we examine a small triangular lattice graph for which we prove it is not possible to design a pot realizing the minimum number of tile types and minimum number of bond-edge types simultaneously, demonstrating that the cube graph is not the unique graph for which this occurs. Section \ref{sec:tubes} provides results for all square and triangular lattice tube graphs in Scenario 1.  

 We summarize our entire collection of results for platonic solids, lattices, and tubes in Figure \ref{tableofresults}. Results and corresponding proofs not given here  are provided in a separate repository of proofs \cite{repository}. Note that $B_1(G)=1$ for all graphs, $T_1(G)$ was previously known for all $k$-regular graphs, and all values for complete graphs were previously known \cite{ellis2014minimal}. These values are included in the table for the reader's convenience.

\begin{figure}[htbp] \centering
\begin{tabularx}{0.9\textwidth}{|l|X|X|X|}
\hline \textbf{Graph Type} & \textbf{Scenario 1} & \textbf{Scenario 2} & \textbf{Scenario 3} \\ \hline 
Tetrahedron ($K_4)$ \footnotemark & $B_1(G) = 1 \newline T_1(G)=2$ & $B_2(G)=1$ \newline $T_2(G)=2$ & $B_3(G)=3 \newline T_3(G)=4$  \\ \hline
Hexahedron & $B_1(G)=1$ \newline $T_1(G)=2$  & $B_2(G)=2$ \newline $T_2(G)=3$ & $B_3(G)=5$ \newline $T_3(G)=6$ \\ \hline
Octahedron & $B_1(G)=1$ \newline $T_1(G)=1$ & $B_2(G)=2$ \newline $T_2(G)=3$ & $B_3(G)=4$ \newline $T_3(G)=5$ \\ \hline 
Icosahedron & $B_1(G)=1$ \newline $T_2(G)=2$ & $B_2(G)=2$ \newline $T_2(G)=3$ & $B_3(G)=9$ \newline $T_3(G)=12$ \\ \hline
Dodecahedron & $B_1(G)=1$ \newline $T_1(G)=2$ & $B_2(G) \leq 4$ \newline $T_2(G) \leq 6$ & $B_3(G) \geq 10$ \newline $T_3(G)= 20$ \\ \hline
Square Lattice & $B_1(G) = 1$ \newline $T_1(G) = 3$ or $4$ \footnotemark & $B_2(G)=2$ \newline $T_2(G) = 4$ \footnotemark & $B_3(G) = 3$ \newline $T_3(G) = 4$ \footnotemark[4] \\ \hline
Triangle Lattice & $B_1(G)=1$ \newline $T_1(G)=4$ or $5$ \footnotemark & & \\ \hline
Square Lattice Tube & $B_1(G)=1$ \newline $T_1(G)=3$ & & \\ \hline
Triangle Lattice Tube & $B_1(G)=1$ \newline $T_1(G)=1$ or $2$ \footnotemark & & \\ \hline
\end{tabularx}
\caption{Known results for Platonic Solids, Lattice Graphs, Tube Lattice Graphs}
\label{tableofresults}
\end{figure}
\footnotetext[2]{Results for $K_4$ previously known \cite{ellis2014minimal}.}
\footnotetext[3]{$T_1(G)$ value depends on 
dimensions of $G$.}
\footnotetext[4]{These $B_i(G)$ and $T_i(G)$ values are for the $2 \times 3$ lattice only.}
\footnotetext[5]{$T_1(G)$ value depends on 
dimensions of $G$.}
\footnotetext[6]{$T_1(G)$ value depends on 
dimensions of $G$.}

 We begin with the  following two lemmas which are of general utility in the remainder of our work, since they address structural constraints that arise in Scenarios 1 and 3. The first lemma asserts that $T_1$ and $B_1$ can always be achieved by the same pot, and thus  increasing the size of $\Sigma(P)$ will not reduce the size of $P$.
 
\begin{lemma}\label{onebondlemma}
Given any graph $G$, there exists a pot $P$ such that $G \in \mathcal{O}(P)$, with $\# \Sigma(P) = 1$, and $\#P = T_1(G)$. 
\end{lemma}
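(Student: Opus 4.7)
The plan is to start with any optimal Scenario 1 pot for $G$ and then collapse all bond-edge types down to a single type while preserving complementarity on each edge. Since $T_1(G)$ counts tile types (not bond-edge types), tile types can only merge under this collapse, never split, so the new pot will still have at most $T_1(G)$ tile types.

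First I would invoke the definition of $T_1(G)$ to choose a pot $P^*$ with $\#P^* = T_1(G)$ together with an assembly design $\lambda^* \colon H \to \Sigma(P^*) \cup \widehat{\Sigma(P^*)}$ such that $P_{\lambda^*}(G) \subseteq P^*$, and in fact without loss of generality $P^* = P_{\lambda^*}(G)$ (any extra tile types in $P^*$ are unused by the design and can be deleted without increasing $\#P^*$).

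Next I would define a new labeling $\lambda \colon H \to \{a, \hat a\}$ by the rule
\[
\lambda(v,e) =
\begin{cases}
a, & \text{if } \lambda^*(v,e) \in \Sigma(P^*),\\
\hat a, & \text{if } \lambda^*(v,e) \in \widehat{\Sigma(P^*)}.
\end{cases}
\]
The key step is to verify that $\lambda$ is a valid assembly design for $G$. For any edge $e$ with $\mu(e)=\{u,v\}$, the condition $\widehat{\lambda^*(v,e)} = \lambda^*(u,e)$ means that exactly one of the two half-edges of $e$ carries an unhatted symbol and the other a hatted symbol under $\lambda^*$. The collapse preserves this alternation, so $\widehat{\lambda(v,e)} = \lambda(u,e)$ as required, and $P := P_\lambda(G)$ is a genuine pot with $\#\Sigma(P) = 1$ that realizes $G$.

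Finally I would bound $\#P$. The collapse induces a well-defined map $P^* \to P$ sending each tile $t_v$ (for $v \in V(G)$) in $P^*$ to the tile obtained by replacing every unhatted label with $a$ and every hatted label with $\hat a$; this map is surjective onto $P$ because $P = P_\lambda(G)$ consists precisely of the images of the tiles at each vertex. Hence $\#P \leq \#P^* = T_1(G)$, and combining with the minimality defining $T_1(G)$ gives $\#P = T_1(G)$, completing the proof. The only subtlety, and the step I expect to warrant the most care, is confirming that the renamed tile multisets behave as expected under the collapse (in particular that loops, where $v$ is incident to both half-edges of $e$, still receive complementary labels $a$ and $\hat a$), but this follows directly from the corresponding property of $\lambda^*$.
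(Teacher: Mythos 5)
Your proposal is correct and matches the paper's proof in essence: the paper likewise takes an optimal Scenario 1 pot and replaces every bond-edge type (iteratively, one symbol at a time, rather than your all-at-once collapse) by a single type, noting that tile types can only merge, so $\#P \leq T_1(G)$, with minimality giving equality. Your write-up is simply a more detailed version of the same argument, including the worthwhile explicit checks of the assembly-design condition and the loop case that the paper leaves implicit.
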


\begin{proof}
 Suppose there exists a pot $P'$ where $G \in \mathcal{O}(P')$, and $\# P' = T_1(G)$.  Suppose $\#\Sigma(P')>1$, with $a_1, a_2 \in \Sigma(P')$. Then, every instance of $a_2$ may be replaced by $a_1$ and $\widehat{a_2}$ may be replaced by $\widehat{a_1}$. This process may be repeated for each $a_i \neq a_1$ until we obtain a pot $P$ such that $\# \Sigma (P) =1$. Then $G \in \mathcal{O}(P)$ as well, and $T_1(G) \leq \#P$ while $\# \Sigma (P) = 1$.  
\end{proof}

 The next result shows that incident half-edges can always be labeled with the  same cohesive-end type.  Recall that we cannot have half-edges labeled $a$ and $\hat{a}$ on the same vertex in Scenario 3 without the possibility that a non-isomorphic graph can be realized if those half-edges were to disconnect and re-join in a different way. However, identical labeling of edges amounts to a ``swapping'' of the two edges, and the graph structure remains unchanged. 

\begin{lemma}\label{lemma:edgeswap}
 If $P$ realizes $G$ and if two incident edges $e_1=\{u,v\}$ and $e_2=\{u,w\}$ are labeled with the same bond-edge type with matching cohesive-end type orientation, then there is an isomorphism $\phi: E(G) \to E(G')$ where $\phi(e_1) = e_2$, $\phi(e_2) = e_1$ and $\phi(e_i) = e_i$ for all $i \neq 1,2$.
\end{lemma}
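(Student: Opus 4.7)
The plan is to construct a second graph $G'$ together with an assembly design $\lambda'$ realizing exactly the same multiset of tiles as $\lambda$ does on $G$, and then to exhibit $\phi$ directly. Since $e_1$ and $e_2$ carry matching orientation of the same bond-edge type, without loss of generality there is an $a \in \Sigma(P)$ with $\lambda(u,e_1) = \lambda(u,e_2) = a$ and $\lambda(v,e_1) = \lambda(w,e_2) = \hat a$, i.e.\ the two unhatted cohesive ends both sit at $u$.

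First I would define $G'$ to share its vertex and edge sets with $G$ but alter only the incidence map: set $\mu_{G'}(e_1) = \{u,w\}$, $\mu_{G'}(e_2) = \{u,v\}$, and $\mu_{G'}(e_i) = \mu_G(e_i)$ for all $i \neq 1,2$. Define $\lambda'$ on the half-edges of $G'$ by $\lambda'(u,e_1) = a$, $\lambda'(w,e_1) = \hat a$, $\lambda'(u,e_2) = a$, $\lambda'(v,e_2) = \hat a$, and $\lambda' = \lambda$ on all other half-edges. The compatibility condition $\widehat{\lambda'(x,e)} = \lambda'(y,e)$ for $\mu_{G'}(e) = \{x,y\}$ is immediate for $e_1, e_2$ and is inherited from $\lambda$ on the remaining edges, so $\lambda'$ is a valid assembly design.

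Next I would check that $P_{\lambda'}(G') \subseteq P$ by observing that the multiset of half-edge labels at each vertex is preserved under the swap: at $u$ the tile still contains two copies of $a$ coming from $e_1$ and $e_2$; at $v$ the tile still contains a single $\hat a$, now contributed by the renamed $e_2$ in place of $e_1$; and at $w$ the single $\hat a$ similarly persists but is now attributed to $e_1$. Every other tile is untouched, so $P_{\lambda'}(G') = P_{\lambda}(G) \subseteq P$, giving $G' \in \mathcal{O}(P)$.

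Finally, I would set $\phi(e_1) = e_2$, $\phi(e_2) = e_1$, and $\phi(e_i) = e_i$ for $i \neq 1,2$, and pair this with the identity map on vertices. A one-line verification shows $\mu_{G'}(\phi(e)) = \mu_G(e)$ for every edge: for $e_1$, $\mu_{G'}(\phi(e_1)) = \mu_{G'}(e_2) = \{u,v\} = \mu_G(e_1)$, and for $e_2$ the computation is symmetric, while the other edges are untouched by construction. Thus $\phi$ is the claimed graph isomorphism. The only real obstacle is book-keeping: in the multigraph model an edge is distinguished from its vertex pair by $\mu$, so one must carefully separate the roles of $\mu_G$ and $\mu_{G'}$ when naming the swapped edges; once $G'$ is set up cleanly, everything else follows by inspection.
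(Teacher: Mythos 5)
Your proposal is correct and follows exactly the reasoning the paper intends: the paper states this lemma without a written proof, offering only the preceding remark that identical labeling with matching orientation ``amounts to a swapping of the two edges, and the graph structure remains unchanged,'' and your construction of $G'$ via the modified incidence map $\mu_{G'}$, the verification that $P_{\lambda'}(G') = P_{\lambda}(G)$ since every vertex's tile multiset is preserved, and the explicit edge swap $\phi$ paired with the identity on vertices formalize precisely that argument. Your careful separation of $\mu_G$ from $\mu_{G'}$ in the multigraph model is the right book-keeping, and the WLOG placing both unhatted cohesive-ends at the shared vertex $u$ correctly uses the ``matching orientation'' hypothesis, which is what rules out a loop forming at $u$ after re-attachment.
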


\subsection{Cube Graph and the Challenges of Scenario 3} \label{sec:cube}

The platonic solids are natural choices for study in this context, as they both represent mathematically standard graphs and also model the polyhedral cage structures assembled in laboratories \cite{zhang2012}. Complete results in all scenarios were only previously known for the tetrahedron, $K_4$. We provide a collection of results for the other platonic solids in \cite{repository}, but here we focus on the hexahedron, or the cube. The cube is of special importance, as the production of a self-assembled DNA cube served as an initial motivator for much of the existing DNA self-assembly research \cite{Seeman82}.

Since the cube is 3-regular, from \cite{ellis2014minimal} we know that $B_1(G)=1$ and $T_1(G)=2$.  Finding $B_2(G)=1$ and $T_2(G)=2$  below is straightforward and demonstrates applications of the construction matrix.  However, the challenges of Scenario 3 are particularly illustrated in this example, as Scenario 3 involves checking for and prohibiting the formation of graphs on eight vertices that are not isomorphic to the cube.

Recall that Proposition~\ref{spectrum} specifies how the spectrum of a pot identifies the sizes of the graphs it realizes. In order to emphasize the sizes of graphs realized by a given pot, in the following we will generally express the spectrum of a pot as an integer vector, with a prefactor having as a term in its denominator the minimum value specified in Item 3 of Proposition~\ref{spectrum}.

\begin{proposition}
If $G$ is the cube graph, then $B_2(G)=2$ and $T_2(G)=3$, and these values are achieved simultaneously by the same pot.
\end{proposition}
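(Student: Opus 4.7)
The plan is to prove the upper and lower bounds for both invariants and to exhibit a single pot realizing both bounds simultaneously.

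For the upper bound, I exploit the antipodal partition of the cube. Fix antipodal vertices $v_0, v_7$ and label the remaining vertices so that $v_1, v_2, v_3$ are the neighbors of $v_0$, $u_1, u_2, u_3$ are the neighbors of $v_7$, and each $v_i$ is adjacent to exactly two of the $u_j$'s (these six cross edges form a bipartite 6-cycle). I take the pot
$$
P = \bigl\{\{a,a,a\},\; \{\hat a, b, b\},\; \{\hat a, \hat b, \hat b\}\bigr\},
$$
assigning the first tile to $v_0$ and $v_7$, the second to each $v_i$, and the third to each $u_j$. The six edges meeting $\{v_0, v_7\}$ are bonded via $a/\hat a$ and the six cross edges via $b/\hat b$, so $G \in \mathcal{O}(P)$ with $\#P = 3$ and $\#\Sigma(P) = 2$. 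The construction matrix $M(P)$ gives the system $3r_1 - r_2 - r_3 = 0$, $2r_2 - 2r_3 = 0$, $r_1 + r_2 + r_3 = 1$, with unique solution $(r_1,r_2,r_3) = (2,3,3)/8$. By Proposition~\ref{spectrum}(3) the minimum order realized is $\mathrm{lcm}(4,8,8) = 8$, so $P$ certifies $B_2(G) \le 2$ and $T_2(G) \le 3$ simultaneously.

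For $B_2(G) \ge 2$, suppose some pot realizes $G$ in Scenario 2 using a single bond-edge type $a$. The only 3-armed tiles available are $A=\{a,a,a\}$, $B=\{a,a,\hat a\}$, $C=\{a,\hat a,\hat a\}$, $D=\{\hat a,\hat a,\hat a\}$, with net-$a$ values $3, 1, -1, -3$. The two-tile sub-pots $\{A,D\}$ and $\{B,C\}$ yield the unique solution $(\tfrac12,\tfrac12)$ of order $2$; $\{A,C\}$ and $\{B,D\}$ give $(\tfrac14,\tfrac34)$ of order $4$; and $\{A,B\}$, $\{C,D\}$ admit no nonnegative solution. For any larger pot drawn from $\{A,B,C,D\}$, the spectrum contains as an extreme point the solution of at least one of these sub-pots (obtained by setting the extra coordinates to $0$), so by Proposition~\ref{spectrum}(2) the pot realizes a graph of order at most $4 < 8$, contradicting Scenario 2.

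For $T_2(G) \ge 3$, suppose $P = \{t_1, t_2\}$ realizes $G$. Each bond-edge $b \in \Sigma(P)$ enforces $z_{b,1}r_1 + z_{b,2}r_2 = 0$, and a parity argument (each tile has three arms, an odd number, so some bond-edge has odd total multiplicity on each tile and therefore nonzero net count) shows that at least one bond $b$ has both $z_{b,1}$ and $z_{b,2}$ nonzero; otherwise some $r_i$ would be forced to $0$. Since $|z_{b,i}| \le 3$, the reduced ratio $p/q$ of $r_1:r_2$ satisfies $p,q \in \{1,2,3\}$ with $\gcd(p,q)=1$, forcing $p+q \le 5$. A graph of order $n$ is realized by $P$ if and only if $(p+q) \mid n$, and the cube (order 8) requires $p+q \in \{2,4\}$. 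In either case a graph of order $2$ or $4$ is also realized, again violating Scenario 2. The main obstacle will be the $B_2$ case analysis, where one must confirm that every valid one-bond pot, including those with three or four tile types and a higher-dimensional spectrum, inherits a low-order extreme solution from one of the two-tile sub-pots; once Proposition~\ref{spectrum}(2) is applied to this extreme point the argument closes.
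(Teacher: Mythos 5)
Your proposal is correct, and it runs on the same basic machinery as the paper---exhibit an explicit pot, verify Scenario 2 via the construction matrix and Proposition~\ref{spectrum}, and enumerate one-bond-edge-type tiles for the lower bound---but it differs in two genuine ways. First, your witness pot $\{\{a^3\},\{\hat a,b^2\},\{\hat a,\hat b^2\}\}$ (the antipodal design, with unique spectrum solution $\frac{1}{8}\langle 2,3,3\rangle$ and minimum order $\mathrm{lcm}(4,8,8)=8$) is different from but just as good as the paper's pot $\{\{a,b^2\},\{a^2,\hat b\},\{a,\hat a^2\}\}$, whose spectrum is $\frac{1}{8}\langle 1,2,4\rangle$. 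Second, and more substantively, you prove $T_2(G)\geq 3$ directly: the parity observation that a 3-armed tile must carry some bond-edge type of odd multiplicity and hence nonzero net count, the pinned ratio $r_1:r_2=p:q$ with $p,q\in\{1,2,3\}$ coprime, and the divisibility constraint $(p+q)\mid 8$ together force a realized graph of order $2$ or $4$. The paper never argues $T_2(G)\geq 3$ explicitly---there it follows only by appeal to the general inequality $T_2(G)\geq B_2(G)+1$ from \cite{ellis2014minimal}---so your self-contained divisibility argument is a real addition, at the cost of a few lines. Your $B_2(G)\geq 2$ analysis is likewise more complete: where the paper asserts without elaboration that $\{\{a^2,\hat a\},\{\hat a^3\}\}$ is, up to symmetry, the only one-bond candidate avoiding order-2 complexes, you enumerate all four tiles $A,B,C,D$, all two-tile sub-pots, and close the larger pots by observing that every 3- or 4-element subset of $\{A,B,C,D\}$ contains one of the realizing pairs $\{A,D\},\{B,C\},\{A,C\},\{B,D\}$, whose zero-padded solution lies in the spectrum and yields a graph of order at most $4$ by Proposition~\ref{spectrum}(2). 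One small gloss to tidy in the $T_2$ argument: you tacitly assume both tiles of $P=\{t_1,t_2\}$ are 3-armed and actually used on the cube; this needs the one-line remark that a single tile type cannot realize a 3-regular graph (some bond-edge type has odd multiplicity, hence nonzero net count, on a 3-armed tile, so the spectrum cannot contain $\langle 1,0\rangle$), after which both tiles must appear on cube vertices, both are 3-armed, and your parity step applies; similarly, note that $z_{b,1}$ and $z_{b,2}$ must have opposite signs, since otherwise the spectrum would be empty. Neither point is a gap in substance, only in statement.
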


\begin{proof}
 Suppose $B_2(G)=1$. Since $G$ is 3-regular, it follows that $\# P \geq 2$ \cite{ellis2014minimal}. There are only four tile types for a 3-regular graph using one bond-edge type.  In order to avoid constructing a pot $P$ such that there exists $G' \in \mathcal{O}(P)$ with $\#V(G')=2$, the only possible pot, without loss of generality, is $P' = \{t_1=\{a^2,\hat{a}\}, t_2=\{\hat{a}^3\}\}$.  The  spectrum of the pot  is $\mathcal{S}(P') = \left\{\frac{1}{4r}\langle 3r, r \rangle \: | \: r \in \mathbb{Z}^+ \right\}$. $\mathcal{S}(P')$ shows there exists $G' \in \mathcal{O}(P')$ such that $\#V(G')=4$. Thus, $B_2(G) \geq 2$.  
 
 The pot $P=\left\{\{a,b^2\},\{a^2,\hat{b}\},\{a,\hat{a}^2\}\right\}$, with two bond-edge types and three tile types, realizes the graph of the cube (see Figure \ref{fig:Cube_scenario2}). 
 
 \begin{figure}[h]
 \centering
    \includegraphics[scale=.75]{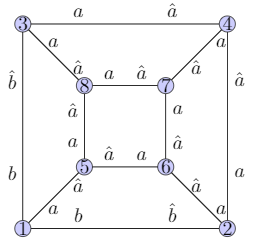}
     \caption{Assembly design of cube in Scenario 2}
     \label{fig:Cube_scenario2}
 \end{figure}
 
 The spectrum of the pot, $\mathcal{S}(P) = \left\{\frac{1}{8r} \langle r,  2r, 4r \rangle \: | \: r \in \mathbb{Z}^+ \right\}$, shows that $\#V(H) \geq 8$ for all $H \in \mathcal{O}(P).$ Thus, this pot satisfies the restrictions of Scenario 2 for the cube. 
 \end{proof}
 
 Let $G$ denote the cube graph with vertices $v_1,...,v_8$ as labeled in Figure \ref{fig:cubeVertices}. Let $t_n$ denote the tile corresponding to a vertex labeled $n$ for $n\in\{1, 2, \ldots, 8 \}$.   
 
For clarity, when we label edge $e = \{v_i,v_j\}$ with bond-edge type $a$, then we will use the directed edge $(v_i, v_j)$ notation to mean $\lambda(v_i, e) = a$ and $\lambda(v_j,e) = \hat{a}$. 

\begin{figure}[h]
	\centering
 \includegraphics[scale=.7]{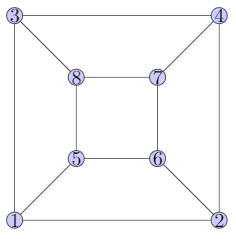}
      \caption{Cube vertex labeling}
      \label{fig:cubeVertices}
  \end{figure}

The following lemmas provide assembly design restrictions for the cube in Scenario 3. 

\begin{lemma}\label{lemma_CubeRestrictionRepeatBondType}
 For the cube $G$ in Scenario 3, suppose directed edge $e = (v_i, v_j)$ is labeled with bond-edge type $a$. Then bond-edge type $a$ may only be repeated on edges $(v_i, v_k)$, $(v_k, v_j)$, or on edge $(v_k, v_m)$ where $d(v_i,v_m) = d(v_j,v_k) = 3$.
\end{lemma}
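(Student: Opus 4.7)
The plan is to perform a case analysis on the position of any second edge $(v_p, v_q)$ carrying the label $a$ (with $v_p$ un-hatted and $v_q$ hatted) relative to $(v_i, v_j)$. The three listed forms correspond to three geometric possibilities, and every other possibility must be excluded by exhibiting an alternative self-assembly of the tiles in $\mathcal{O}(P)$ of order $8$ that is not isomorphic to the cube, thereby violating Scenario 3.

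If $v_p = v_i$ or $v_q = v_j$ (shared endpoint, matching orientation), then $(v_p,v_q)$ has exactly the form $(v_i, v_k)$ or $(v_k, v_j)$, and Lemma \ref{lemma:edgeswap} guarantees that any re-pairing of the $a$- and $\hat{a}$-arms produces an isomorphic assembly. If instead $v_p = v_j$ or $v_q = v_i$ (shared endpoint, opposite orientation), then a single vertex carries both an $a$-arm and a $\hat{a}$-arm; these complementary arms can bond into a self-loop at that vertex, while the remaining two $a$-arms pair off to form an extra edge. The resulting assembly contains a loop, so it is not isomorphic to the loopless cube, and this configuration is forbidden under Scenario 3.

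The main work is the vertex-disjoint case $\{v_p,v_q\} \cap \{v_i,v_j\} = \emptyset$. Here the four $a$-labeled arms can pair in two ways: the original pairing $v_i \leftrightarrow v_j$, $v_p \leftrightarrow v_q$ gives back the cube, while the swapped pairing $v_i \leftrightarrow v_q$, $v_p \leftrightarrow v_j$ yields an assembly $H$ in which the edges $v_iv_j$ and $v_pv_q$ of the cube are replaced by $v_iv_q$ and $v_pv_j$. For Scenario 3, $H$ must be isomorphic to the cube. Note $H$ is automatically $3$-regular on $8$ vertices, and $H$ contains a multi-edge precisely when $v_iv_q$ or $v_pv_j$ is already a cube edge, i.e., when $d(v_i,v_q)=1$ or $d(v_p,v_j)=1$; such cases immediately give $H \not\cong G$. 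When $H$ is simple, I will invoke the classical fact that up to isomorphism the cube is the unique $3$-regular bipartite graph on eight vertices, so $H \cong G$ iff $H$ is bipartite.

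Using the canonical bipartition $V(G) = A \cup B$ of the cube with $A$ and $B$ the sets of vertices at even and odd graph-distance from $v_i$, both cube edges $v_iv_j$ and $v_pv_q$ cross between $A$ and $B$, and the new edges $v_iv_q$, $v_pv_j$ cross between the two parts iff $d(v_i,v_q)$ and $d(v_j,v_p)$ are both odd. Combining these parity conditions with the multi-edge avoidance $d(v_i,v_q) \neq 1$ and $d(v_p,v_j) \neq 1$ together with the cube's diameter $3$ forces $d(v_i,v_q) = d(v_j,v_p) = 3$, so that $v_q$ is the antipode $v_m$ of $v_i$ and $v_p$ is the antipode $v_k$ of $v_j$, matching exactly the third allowed form in the lemma. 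The main obstacle is this disjoint case; once the bipartite characterization of the cube and the multi-edge criterion are in hand, the distance analysis pinning down the antipodes follows easily from the vertex-transitivity of the cube.
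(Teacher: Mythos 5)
Your proof is correct, and in the main (vertex-disjoint) case it takes a genuinely different route from the paper's. The paper works on a fixed labeled copy of the cube (Figure \ref{fig:cubeVertices}) and exhaustively checks every directed edge as a candidate for the repeated label: shared-endpoint placements with matching orientation are dispatched by Lemma \ref{lemma:edgeswap}, opposite orientation by the hatted/un-hatted prohibition cited from \cite{ellis2014minimal}, the antipodal edge by direct verification, and each remaining placement by exhibiting, after breaking and re-attaching, a concrete multi-edge, 3-cycle, or 5-cycle. You replace that enumeration with a uniform structural certificate: the swapped re-pairing yields a 3-regular order-8 graph $H$ realized by the same tiles, and $H \cong G$ iff $H$ is simple and bipartite; your uniqueness claim is indeed correct, since in any 3-regular bipartite simple graph on $4+4$ vertices the non-edges between the parts form a perfect matching, so the graph is $K_{4,4}$ minus a perfect matching, which is the cube. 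Your parity-of-distance analysis then subsumes the paper's ad hoc certificates --- the 3- and 5-cycles are exactly the odd cycles created when a new edge fails to cross the bipartition, and the multi-edge cases are $d=1$ --- forcing $d=3$ and hence the antipodal placement with precisely the orientation in the statement. One step you should make explicit: to conclude that $H$ is non-bipartite when a new edge lands inside a part, you need the canonical 2-coloring to be the \emph{only} candidate bipartition of $H$, which holds because the cube minus the two broken edges is still connected (the cube is 3-edge-connected), so its proper 2-coloring is unique up to swapping classes; this is a one-line patch, not a flaw in the approach. As a trade-off, the paper's figure-based case check is more elementary and its explicit labels feed directly into the subsequent cube lemmas, whereas your argument is shorter to verify, eliminates the paper's ``it can be verified'' steps, and generalizes in principle to any target graph characterized by regularity plus bipartiteness.
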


\begin{proof}
 Let $G$ be the cube with vertex labels as in Figure \ref{fig:cubeVertices}. Assume without loss of generality that directed edge $(v_1, v_3)$ is labeled with bond-edge type $a$.  The directed edges $(v_1,v_2), (v_1, v_5), (v_4,v_3)$, or $(v_8, v_3)$ may be labeled with $a$  by Lemma \ref{lemma:edgeswap}. It can be verified that if directed edge $(v_6, v_7)$ is labeled with bond-edge type $a$, then breaking and reattaching the edges will result in an isomorphic graph. Directed edges $(v_2, v_1), (v_5,v_1), (v_3,v_4),$ and $(v_3,v_8)$ cannot be labeled $a$ since a vertex cannot have both a hatted and unhatted cohesive-end of the same type  by Lemma 2 in \cite{ellis2014minimal}. 
 
 If directed edges $(v_6, v_2),$ $(v_4, v_2),$ $(v_4, v_7),$ $(v_6,v_5),$ $(v_8,v_5),$ or $(v_8,v_7)$ are labeled with $a$, then when the edges break and reattach, the resulting graph contains a multi-edge and is nonisomorphic to $G$.  Furthermore, if directed edges $(v_2, v_6), (v_7, v_4), (v_5,v_6),  (v_7, v_6),$  or $(v_7,v_8)$ are labeled with $a$, then when the edges break and reattach, the resulting graph contains a 3-cycle and is nonisomorphic to $G$. If directed edges $(v_2, v_4)$ or $(v_5,v_8)$ are labeled with $a$, then when the edges break and reattach, the resulting graph contains a 5-cycle and is nonisomorphic to $G$.
\end{proof}

\begin{lemma}\label{lemma2_Cube3bonds}
  Let $G$ be the cube graph. In Scenario 3, if $G \in \mathcal{O}(P)$, with $t \in P$, and $\lambda(v_i) = \lambda(v_j) = t$ for some $i \neq j$, then $t= \{a, b, c\}$ where $a,b,c$ are distinct bond-edge types.
 \end{lemma}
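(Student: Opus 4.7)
The plan is to argue by contradiction, assuming $t$ has a repeated bond-edge type and producing a graph in $\mathcal{O}(P)$ on $8$ vertices that is not isomorphic to the cube. First I would observe that in Scenario 3 no vertex can carry both a label $a$ and its complement $\hat{a}$ (as invoked in the proof of Lemma~\ref{lemma_CubeRestrictionRepeatBondType}), so a repeated bond-edge type in $t$ must come from a repeated cohesive-end of a single orientation. Up to complementation, the only two possibilities are therefore $t=\{a^3\}$ or $t=\{a^2,b\}$ where the bond-edge type of $b$ differs from $a$. I would also note immediately that $v_i$ and $v_j$ cannot be adjacent: an edge between them would require complementary cohesive-ends on its two half-edges, but $\lambda(v_i)=\lambda(v_j)=t$ and no cohesive-end in $t$ is complemented by another in $t$ under the Scenario 3 restriction.

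The next step is to use Lemma~\ref{lemma_CubeRestrictionRepeatBondType} to force the $a$-neighborhoods of $v_i$ and $v_j$ to coincide. Let $U$ denote the set of neighbors of $v_i$ reached by an $a$-labelled arm at $v_i$, and let $W$ denote the analogous set for $v_j$. For any $u\in U$ and $w\in W$, Lemma~\ref{lemma_CubeRestrictionRepeatBondType} applied to the $a$-labelled directed edges $(v_i,u)$ and $(v_j,w)$ allows only two configurations: $w=u$, or the antipodal configuration in which $v_j$ is the cube-antipode of $u$ and $w$ is the cube-antipode of $v_i$. Whenever $\#W\ge 2$, the antipodal branch cannot hold simultaneously for two different choices of $w\in W$ (both would have to equal the unique antipode of $v_i$), so for at least one $w$ we must have $w=u$, proving $u\in W$. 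Ranging over all $u\in U$ gives $U\subseteq W$, and by symmetry $U=W$.

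With this constraint the two cases close cleanly. If $t=\{a^3\}$ then $\#U=\#W=3$, so $v_i$ and $v_j$ would have identical neighborhoods in the cube, which is impossible because the cube has no twin vertices. If $t=\{a^2,b\}$ then $\#U=\#W=2$, so $v_i$ and $v_j$ share two common neighbors $x$ and $y$, each carrying two $\hat{a}$ half-edges (one from the edge to $v_i$ and one from the edge to $v_j$). I would then re-match the four $a$ half-edges at $\{v_i,v_j\}$ with the four $\hat{a}$ half-edges at $\{x,y\}$ so that both $a$ half-edges of $v_i$ bond to the two $\hat{a}$ half-edges of $x$ and both $a$ half-edges of $v_j$ bond to the two $\hat{a}$ half-edges of $y$. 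Leaving every other bond untouched yields a valid complete complex realized from the same pot, but containing parallel doubled edges between $v_i$ and $x$ and between $v_j$ and $y$. This graph has $8$ vertices and is not simple, so it is not isomorphic to the cube, contradicting Scenario 3.

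I expect the main obstacle to be the application of Lemma~\ref{lemma_CubeRestrictionRepeatBondType} that forces $U=W$, since one must carefully check that, in the absence of a shared endpoint, the antipodal configuration is the \emph{only} allowed option, and that the uniqueness of the cube-antipode then collapses the possibilities as claimed. After this, the two cases of the conclusion are routine: the twin-vertex obstruction is built into the cube's structure, and the multi-edge re-matching in the second case is a direct construction whose only verification is that the unchanged $b$-arms on $v_i,v_j$ and the untouched third arms on $x,y$ keep every other half-edge properly bonded, so the re-matched assembly remains a genuine complete complex.
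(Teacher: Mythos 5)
Your proof is correct, and while it rests on the same key tool as the paper --- Lemma~\ref{lemma_CubeRestrictionRepeatBondType} --- it uses that lemma in a genuinely different way. The paper's proof takes $\lambda(v_1)=t=\{a^2,b\}$ without loss of generality, excludes adjacent repetition via Lemma 3 of \cite{ellis2014minimal}, and then runs a positional case analysis over the remaining candidates $v_4, v_6, v_7, v_8$, re-invoking the break-and-reattach case work \emph{from the proof} of Lemma~\ref{lemma_CubeRestrictionRepeatBondType} to exhibit a multi-edge or $3$-cycle realization in each position. You instead use only the \emph{statement} of that lemma: applying it to pairs of $a$-labelled directed edges $(v_i,u)$, $(v_j,w)$ and noting that the antipodal branch can absorb at most one element of $W$ (the unique antipode of $v_i$), you force the $a$-neighborhoods to coincide, $U=W$, after which $t=\{a^3\}$ collapses because the cube has no twin vertices and $t=\{a^2,b\}$ collapses via one explicit re-matching yielding doubled edges $v_ix$ and $v_jy$. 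What your route buys is self-containment and independence from the figure's coordinates: the paper's WLOG quietly subsumes the $\{a^3\}$ case and defers the heart of the argument to ``as in the proof of'' the earlier lemma, whereas you isolate the single structural consequence ($U=W$) from which both cases fall, and your pigeonhole step via uniqueness of the antipode is checked correctly (with $\#W\geq 2$ guaranteed since the cube is simple, so distinct $a$-arms reach distinct neighbors, and $x,y\notin\{v_i,v_j\}$ since you showed $v_i\not\sim v_j$). What the paper's route buys is brevity on the page, since it amortizes the case analysis already performed for Lemma~\ref{lemma_CubeRestrictionRepeatBondType}. Your re-matching construction is the same break-and-reattach mechanism the paper uses throughout, so it is fully consistent with the formalism: the $b$-arms of $v_i,v_j$ and the third arms of $x,y$ remain bonded as before, every bond stays complementary, and the resulting order-$8$ complex has a multi-edge, violating Scenario 3.
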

 
 \begin{proof} 
  Let $G$ denote the cube graph with vertices $v_1,...,v_8$ as labeled in Figure \ref{fig:cubeVertices}. We show that a tile with a repeated bond-edge type may not be repeated in the assembly design of $G$.   Assume $G \in \mathcal{O}(P)$ and $t \in P$. Without loss of generality, let $\lambda(v_1)=t=\{a^2,b\}$. Furthermore, suppose $\lambda(v_1,\{v_1,v_3\}) = a$. 
  Suppose $\lambda(v_j) = t$  for $j \in \{2,...,8\}$. By Lemma 3 in \cite{ellis2014minimal}, $t$ may not be repeated on adjacent vertices. Hence, we only need to consider the cases $j=4,6,7, 8$. As in the proof of Lemma \ref{lemma_CubeRestrictionRepeatBondType}, if $\lambda(v_j) = t$ for $j \in \{4, 6, 7, 8\}$, then a graph with a multi-edge or 3-cycle may be realized by $P$.  
  Hence, a tile with a repeated bond-edge type may not be repeated in the construction of the cube in Scenario 3.
 \end{proof}

    \begin{lemma}\label{lemma1_Cube3}
Let $G$ be the cube graph. In Scenario 3, if $G \in \mathcal{O}(P)$, and $t \in P$, then $\lambda(v_i)=t$ for at most two distinct $v_i$.
\end{lemma}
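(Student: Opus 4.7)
The plan is to proceed by contradiction: assume that $\lambda(v_{i_1}) = \lambda(v_{i_2}) = \lambda(v_{i_3}) = t$ for three distinct vertices. By Lemma~\ref{lemma2_Cube3bonds}, $t$ must consist of three arms with three distinct bond-edge types, say $a$, $b$, $c$, with the orientation (hatted or unhatted) of each arm fixed by $t$ itself. Lemma 3 of \cite{ellis2014minimal} forbids a repeated tile from occupying adjacent vertices, so $v_{i_1}, v_{i_2}, v_{i_3}$ form an independent set and therefore lie in a single bipartite class of the cube.

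The core step will be to show that, for each of the three bond-edge types, the three arms emanating from $v_{i_1}, v_{i_2}, v_{i_3}$ all reach a common vertex in the opposite bipartite class. Focusing on type $a$, write the three $a$-edges as $(v_{i_j}, w_j)$, all with the same orientation at $v_{i_j}$ because the tile at each source is identical. I would then apply Lemma~\ref{lemma_CubeRestrictionRepeatBondType} pairwise. If exactly two of the $w_j$ coincide, the third edge stands in no sharing relation to the other two and so must be in the antipodal configuration with each of them, forcing its endpoint to be the unique antipode of two different source vertices simultaneously. If all three $w_j$ are distinct, two of them must both be the unique antipode of $v_{i_1}$. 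Both outcomes contradict the uniqueness of antipodes in $Q_3$, so $w_1 = w_2 = w_3 =: w_a$. The same reasoning produces common endpoints $w_b$ and $w_c$ for $b$ and $c$; since the cube has no multi-edges, $w_a, w_b, w_c$ must be pairwise distinct.

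To close, I would observe that each of $v_{i_1}, v_{i_2}, v_{i_3}$ now has neighborhood exactly $\{w_a, w_b, w_c\}$ in the cube. However, distinct vertices within the same bipartite class of the cube lie at distance $2$ and so share exactly $2$ of their $3$ neighbors; thus no two distinct vertices of the cube can have identical neighborhoods. This forces $v_{i_1} = v_{i_2} = v_{i_3}$, contradicting the assumption of distinctness. The main obstacle I anticipate is the careful bookkeeping inside the case analysis of the second paragraph: correctly tracking the orientation of each $a$-edge and applying the antipodal condition $d(v_i, v_m) = d(v_j, v_k) = 3$ from Lemma~\ref{lemma_CubeRestrictionRepeatBondType} to each pair without conflating the roles of source and sink, and then repeating the analysis cleanly for bond-edge types $b$ and $c$.
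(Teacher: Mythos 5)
Your proposal is correct, but it takes a genuinely different route from the paper's. The paper's proof works in the fixed labeling of Figure \ref{fig:cubeVertices}: it uses the adjacency restriction from \cite{ellis2014minimal} to force the three tile-sharing vertices into $\{v_4,v_6,v_8\}$ together with $v_1$, reduces by the cube's symmetry to the single case $\lambda(v_1)=\lambda(v_4)=\lambda(v_6)=t$, and then disposes of it by asserting that breaking and re-attaching arms realizes a graph with multi-edges (between $v_1$ and $v_5$, $v_4$ and $v_7$, or $v_1$ and $v_2$), i.e.\ a non-isomorphic graph in $\mathcal{O}(P)$, violating Scenario 3. You instead argue coordinate-free: after using Lemma \ref{lemma2_Cube3bonds} to get one arm of each of three distinct bond-edge types, you apply Lemma \ref{lemma_CubeRestrictionRepeatBondType} \emph{pairwise} to the three equally-oriented edges of each type; since the three sources are distinct and each cube vertex has a unique antipode, both the ``exactly two endpoints coincide'' and ``all three distinct'' cases collapse (the antipodal configuration would make one vertex the antipode of two different vertices, or two endpoints the antipode of the same vertex), so all three edges of each type meet a common endpoint, and the three tile vertices would then have identical neighborhoods $\{w_a,w_b,w_c\}$ --- impossible, as distinct vertices of the cube share at most two of their three neighbors. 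I verified your case analysis, including the orientation bookkeeping you flagged (if the type-$a$ end of $t$ is hatted the tile vertices are common sinks rather than sources, and the mirror-image trichotomy gives the same conclusion), and it is sound. What each approach buys: the paper's proof is shorter but leaves the multi-edge re-attachment claim for the reader to verify and is tied to a chosen labeling; yours makes every step explicit, reuses the already-proven restriction lemma as the sole combinatorial input, and in fact shows the stronger fact that no Scenario-3-compatible labeling can even place $t$ on three vertices, at the cost of a few unproven but elementary cube facts (every $3$-vertex independent set lies in one bipartite class, antipodes are unique, distinct vertices have distinct neighborhoods).
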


\begin{proof}
We  claim that a tile cannot be repeated more than two times in the assembly design of $G$. Given the labeling of the cube vertices in Figure \ref{fig:cubeVertices}, assume $\lambda(v_1) = t$. Suppose $\lambda(v_i)= \lambda(v_j)=t$ where $i \neq j$ and $i,j \in \{2, 3, \ldots, 8\}$. From \cite{ellis2014minimal}, $i, j \in \{4, 6, 8\}$. Suppose without loss of generality $\lambda(v_4) = \lambda(v_6)= t$. Then a graph with multi-edges between $v_1$ and $v_5$, $v_4$ and $v_7$, or $v_1$ and $v_2$ may be realized.  Therefore, a tile can be repeated at most twice. 
\end{proof}

\begin{lemma}\label{lemma3_CubeNoRepeat3tiles}
Let $G$ be the cube graph. In Scenario 3, if $G \in \mathcal{O}(P)$, then $\lambda(v_i)=\lambda(v_j)$ for at most two disjoint sets of vertex pairs $\{v_i,v_j\}$.
\end{lemma}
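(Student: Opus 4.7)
The plan is by contradiction: assume three pairwise disjoint same-tile pairs exist, and use the structural constraints from Lemmas \ref{lemma_CubeRestrictionRepeatBondType}, \ref{lemma2_Cube3bonds}, and \ref{lemma1_Cube3} to manufacture an assembly design on eight vertices whose underlying graph is not isomorphic to $G$, contradicting Scenario 3.

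First I would locate the pairs. By the argument used in Lemma \ref{lemma1_Cube3}, any two vertices carrying the same tile must lie at cube-distance exactly $2$. The distance-$2$ graph of the cube decomposes as two disjoint copies of $K_4$, one on each bipartition class (which, in the labeling of Figure \ref{fig:cubeVertices}, are $\{v_1, v_4, v_6, v_8\}$ and $\{v_2, v_3, v_5, v_7\}$). Since a $K_4$ admits at most two disjoint edges, three disjoint same-tile pairs must split as a perfect matching of one bipartition class plus a single pair in the other. Exploiting the cube's automorphism group, which acts transitively on perfect matchings within each class, I would reduce without loss of generality to the case $\lambda(v_1) = \lambda(v_4) = t_A$, $\lambda(v_6) = \lambda(v_8) = t_B$, and a third pair $\{x_1, x_2\} \subset \{v_2, v_3, v_5, v_7\}$ labeled $t_C$. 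Lemma \ref{lemma2_Cube3bonds} then guarantees that each of $t_A, t_B, t_C$ consists of three distinct bond-edge types. The setwise stabilizer of $\{\{v_1,v_4\},\{v_6,v_8\}\}$ would further reduce the six possible positions of $\{x_1,x_2\}$ to a handful of canonical cases, for example $\{v_2, v_3\}$, $\{v_5, v_7\}$, and $\{v_2, v_5\}$.

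For each canonical case I would construct an explicit alternative assembly by swapping two arms of the same bond-edge type between the two copies of $t_A$ (and where useful between the two copies of $t_B$). Because $v_1$ and $v_4$ have exactly the shared neighbors $v_2, v_3$, Lemma \ref{lemma_CubeRestrictionRepeatBondType} sharply restricts how the three distinct bond-edge types of $t_A$ can be distributed across the arms going out to $\{v_2, v_3, v_5, v_7\}$, and a suitably chosen arm-swap reroutes at least one cube edge. The resulting eight-vertex graph would contain either a multi-edge, a triangle, or a $5$-cycle, none of which appears in the cube.

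I expect the main obstacle to be ruling out the case when the arm-swap is ``cancelled'' by a cube automorphism preserving the tile labeling: for instance when $\{x_1,x_2\} = \{v_2,v_3\}$, the configuration is stabilized by $\sigma:(x,y,z)\mapsto(1-x,1-y,z)$, so a naive swap of $t_A$'s arms could be undone by $\sigma$ and yield back the cube. The crux of the argument is therefore to verify in each canonical case that one can perform a rearrangement of arms whose composition with every tile-labeling-preserving automorphism of $G$ is still nontrivial; Lemma \ref{lemma:edgeswap}, read contrapositively, certifies exactly when such a rearrangement produces a new graph rather than the original cube.
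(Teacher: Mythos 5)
Your opening moves are sound and in fact streamline the paper's setup: repeated tiles must sit at distance two (the paper rules out adjacency via \cite{ellis2014minimal} and the antipodal vertex via Lemma \ref{lemma_CubeRestrictionRepeatBondType}, exactly as you can), the distance-two graph is indeed two disjoint $K_4$'s on the bipartition classes $\{v_1,v_4,v_6,v_8\}$ and $\{v_2,v_3,v_5,v_7\}$, and three disjoint pairs must split as a matching plus a singleton pair, reducible by symmetry to $t_A$ on $\{v_1,v_4\}$ and $t_B$ on $\{v_6,v_8\}$. The genuine gap is your step 4. You propose to finish by swapping two same-type arms and exhibiting a non-isomorphic eight-vertex graph, but Lemma \ref{lemma_CubeRestrictionRepeatBondType} says a bond-edge type may repeat \emph{only} in configurations for which breaking and re-attaching returns an isomorphic graph --- that is precisely why those placements are the allowed ones (its proof explicitly verifies, e.g., that the antipodal placement $(v_6,v_7)$ re-assembles to an isomorphic cube). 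So in any labeling that has survived your earlier appeals to that lemma, \emph{every} single same-type swap is isomorphism-preserving, and your construction can never terminate in a multi-edge, triangle, or $5$-cycle; the ``cancellation'' you worry about in your last paragraph is not an edge case but the universal situation. Relatedly, reading Lemma \ref{lemma:edgeswap} ``contrapositively'' is invalid: it is a one-directional statement (incident same-type edges swap to an isomorphic graph), and it does not certify that other rearrangements produce a new graph --- the antipodal configuration is a direct counterexample to your ``exactly when'' reading.

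The contradiction actually available in your configuration is simpler and requires no graph construction. By Lemma \ref{lemma2_Cube3bonds}, $t_A$ has three distinct bond-edge types, one arm of each type at $v_1$ and at $v_4$, with the same hattedness on both copies. Checking Lemma \ref{lemma_CubeRestrictionRepeatBondType} against the arms of $v_1$ (toward $v_2,v_3,v_5$) and $v_4$ (toward $v_2,v_3,v_7$) shows the type on $\{v_1,v_5\}$ is forced onto the antipodal edge $\{v_4,v_7\}$, and the remaining two types are forced to converge on the common neighbors $v_2$ and $v_3$, each of which therefore receives two identical cohesive ends; symmetrically, $t_B$ doubles an end type at $v_5$ and $v_7$. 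By Lemma \ref{lemma2_Cube3bonds} no vertex carrying a doubled end type can host a repeated tile, so no third disjoint pair fits inside $\{v_2,v_3,v_5,v_7\}$, and your canonical cases all die at once. Note this is also how the paper concludes, though organized differently: it fixes one repeated tile, derives the forced labels around it, pins the other two pairs to $\{v_2,v_7\}$ and $\{v_4,v_6\}$, and shows the edge $\{v_7,v_4\}$ admits no label. Both are labeling-impossibility arguments; replacing your step 4 with the forced-convergence count above would make your symmetry-first route a complete, and arguably cleaner, proof.
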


\begin{proof}
  Without loss of generality, assume tiles $t_1, t_2,$ and $t_3$ are each repeated two times in the assembly design of $G$, and $\lambda(v_1) = t_1$.  By Lemma \ref{lemma2_Cube3bonds}, $t_1 = \{a, b, c\}$, where $a$, $b$, and $c$ are distinct bond-edge types. Furthermore, assume without loss of generality, $\lambda(v_1,\{v_1,v_3\}) = a,$     $\lambda(v_1,\{v_1,v_5\})=b,$  $\lambda(v_1,\{v_1,v_2\})= c$ (see Figure \ref{fig:CubeRepeat_t1}).  Then since tile types may not be repeated on adjacent vertices, $\lambda(v_k) = t_1$ only if $k = 4, 6, 7,$ or 8.

 Note that if $\lambda(v_7) = t_1$, then bond-edge type $a$ must appear on either directed edges $(v_7,v_6), (v_7,v_8),$ or $(v_7,v_4)$ which contradicts Lemma \ref{lemma_CubeRestrictionRepeatBondType}. Thus $k = 4, 6,$ or $8$, but by the symmetry of the cube, these three cases are equivalent.

 Suppose $\lambda(v_1) = \lambda(v_8) = t_1 = \{a, b, c\}$. By Lemma \ref{lemma_CubeRestrictionRepeatBondType}, the three edges incident to $v_8$ can only be labeled in one way (see Figure \ref{fig:CubeRepeat_t1}). 
 
 \begin{figure}[h]
     \centering
     \includegraphics[scale=.7]{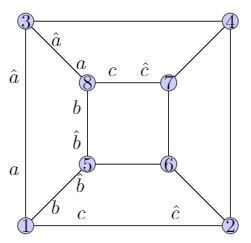}
     \caption{Repeating $t_1$ on two vertices}
     \label{fig:CubeRepeat_t1}
 \end{figure}
 
 Because two edges at $v_3$ are labeled with $\hat{a}$ and two edges at $v_5$ are labeled with $\hat{b}$, then the tile types at $v_3$ and $v_5$ cannot appear more than once by Lemma \ref{lemma2_Cube3bonds}. Furthermore, since tile types cannot be repeated on adjacent vertices, then it must be the case that $\lambda(v_2) = \lambda(v_7) = t_2$ and $\lambda(v_4)=\lambda(v_6) = t_3$. Now consider $t_2$. Since $d(v_1,v_6) \neq 3$ and $d(v_1,v_4) \neq 3$, then $a \notin t_2$ and $b \notin t_2$ by Lemma \ref{lemma_CubeRestrictionRepeatBondType}. Thus, $t_2 = \{\hat{c}, d, e\}$ and $\hat{d}, \hat{e} \in t_3$ (see Figure \ref{fig:CubeRepeat_t1_t2}). Since $\lambda(v_7)= t_2$, then $\lambda(v_7,\{v_7,v_4\}) = d$ or $e$. Given that $\lambda(v_4) = t_3$ and $\lambda(v_4,\{v_4,v_2\})=\hat{e}$ then $\lambda(v_7,\{v_7,v_4\}) \neq e$ by Lemma \ref{lemma2_Cube3bonds}.  It follows from Lemma \ref{lemma_CubeRestrictionRepeatBondType} that $\lambda(v_7,\{v_7,v_4\}) \neq d$ since $\lambda(v_2,\{v_2,v_6\}) = d$ and $d(v_7,v_6) \neq 3$. 
  
 \begin{figure}[h]
     \centering
    \includegraphics[scale=.7]{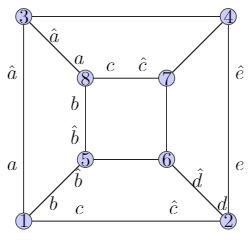}
    \caption{Repeating $t_1, t_2, t_3$}
     \label{fig:CubeRepeat_t1_t2}
 \end{figure}
 
 Therefore, three tile types may not be repeated two times each.
\end{proof}

 \begin{proposition}\label{prop:CubeT3}
 Let $G$ be the cube. Then $T_3(G)=6$.
 \end{proposition}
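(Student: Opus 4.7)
The plan is to split the argument into a lower bound $T_3(G) \geq 6$ coming from the three preceding lemmas, and a matching upper bound realized by an explicit pot with exactly six tile types.

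For the lower bound, suppose $P$ is a pot realizing $G$ in Scenario 3 with $\#P = T_3(G)$. By Lemma \ref{lemma1_Cube3} every tile type of $P$ is assigned to at most two vertices of $G$, and by Lemma \ref{lemma3_CubeNoRepeat3tiles} at most two tile types are repeated (i.e.\ assigned to two distinct vertices). Writing $n = \#P$ and letting $k \leq 2$ be the number of repeated tile types, the total number of vertex-assignments is $n + k$; since the cube has $8$ vertices, $n + k = 8$, so $n \geq 6$.

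For the upper bound, I would construct an explicit pot with six tile types, modeled on the (non-contradictory) partial configuration analyzed at the start of the proof of Lemma \ref{lemma3_CubeNoRepeat3tiles}. Place $t_1 = \{a,b,c\}$ on the antipodal pair $\{v_1, v_8\}$ and $t_2 = \{\hat{c}, d, e\}$ on the antipodal pair $\{v_2, v_7\}$; label the remaining vertices $v_3, v_4, v_5, v_6$ by four distinct tiles $t_3, t_4, t_5, t_6$. Guided by Lemma \ref{lemma_CubeRestrictionRepeatBondType}, I would orient the edges so that each repeated bond-edge type is used on a pair of antipodal edges of the cube, and I would introduce a single additional bond-edge type $f$ (on one antipodal pair of edges) to close the labeling without forcing unwanted tile coincidences. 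A direct check that each tile multi-set is consistent with its incident edge labels confirms that the resulting $\lambda$ is an assembly design and that $P := P_\lambda(G)$ has $\#P = 6$.

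Finally, I must verify that $P$ meets the two Scenario 3 conditions. For the no-smaller-graph condition, I would write $M(P)$ explicitly and apply Proposition \ref{spectrum}: a short computation should show $\mathcal{S}(P)$ has smallest denominator $8$, ruling out graphs of order less than $8$ in $\mathcal{O}(P)$. The main obstacle is the no-non-isomorphic-cube condition, which I would handle by enumerating the ways that hatted and unhatted half-edges of a common bond-edge type in $P$ can be re-matched. Because each bond-edge type in the construction appears on only a small, geometrically restricted set of half-edges, the only allowed re-matchings are the edge swaps of Lemma \ref{lemma:edgeswap} together with the antipodal swaps sanctioned by Lemma \ref{lemma_CubeRestrictionRepeatBondType}, each of which yields a graph isomorphic to $G$; all remaining re-matchings would produce multi-edges, $3$-cycles, or $5$-cycles, none of which occur in the cube. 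This completes the verification and establishes $T_3(G) = 6$.
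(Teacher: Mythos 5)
Your proposal is correct and follows essentially the same route as the paper: the lower bound $T_3(G)\geq 6$ via Lemmas \ref{lemma1_Cube3} and \ref{lemma3_CubeNoRepeat3tiles} (you count directly, $n+k=8$ with $k\leq 2$; the paper argues the contrapositive, that five tile types would force a tile used three times or three tiles each used twice), and the upper bound via an explicit six-tile pot checked with the spectrum and Lemma \ref{lemma_CubeRestrictionRepeatBondType}. Your pot (two tiles each placed on an antipodal vertex pair, four singleton tiles, repeated bond-edge types only on incident or antipodal edges) differs in its specific labels from the paper's $P = \left\{\{a,b,c\}, \{\hat{a}^2,\hat{e}\}, \{e, d, f\}, \{\hat{b}, \hat{d}^2\}, \{\hat{c}^2, \hat{e}\},\{\hat{b}, \hat{f}^2\} \right\}$, but it is the same construction pattern and the same verification strategy.
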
 

 \begin{proof}

 Suppose $T_3(G) \leq 5$. Then either $R_i \geq 3$ for some $i$ or $R_i = 2, R_j = 2, R_k = 2$ for some $i \neq j \neq k$. In other words, a tile must appear at least three times or three tiles must appear two times in the assembly design of $G$.  By Lemma \ref{lemma1_Cube3} and   Lemma \ref{lemma3_CubeNoRepeat3tiles}, this is not possible. Thus, $T_3(G)\geq 6$. Consider the pot $P = \left\{\{a,b,c\}, \{\hat{a}^2,\hat{e}\}, \{e, d, f\}, \{\hat{b}, \hat{d}^2\}, \{\hat{c}^2, \hat{e}\},\{\hat{b}, \hat{f}^2\} \right\}$. Figure \ref{fig:Cube13} shows that $G \in \mathcal{O}(P)$. Solutions to the construction matrix show that $\# V(H) \geq 8$ for all $H \in \mathcal{O}(P)$, since $\mathcal{S}(P)=\left\{\frac{1}{8r} \langle 2r,r,2r,r,r,r \rangle | r \in \mathbb{Z}\right\}$. Lemma \ref{lemma_CubeRestrictionRepeatBondType} guarantees that if any edges break and re-attach the graph formed is isomorphic to $G$. 
\end{proof}

       \begin{figure}[h!]
      \centering
      \includegraphics[scale=.7]{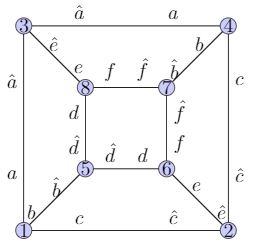}
      \caption{Pot realizing $G$ in Proposition \ref{prop:CubeT3}}
      \label{fig:Cube13}
  \end{figure}

\begin{proposition}\label{prop:CubeB3}
Let $G$ be the cube. Then $B_3(G)=5$.
\end{proposition}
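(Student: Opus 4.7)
The plan is to prove both $B_3(G)\ge 5$ and $B_3(G)\le 5$, the lower bound by contradiction via the earlier cube-specific lemmas and the upper bound by an explicit pot.

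For the lower bound, I would suppose $P$ realizes $G$ in Scenario 3 with $\#\Sigma(P)\le 4$. The first step is to observe that each bond-edge type can appear on at most three edges of $G$. By Lemma \ref{lemma_CubeRestrictionRepeatBondType}, any two same-typed edges share a tail, share a head, or are antipodal; each cube edge has a unique antipodal partner, so once an antipodal pair is chosen no third edge is compatible with both. Combined with the degree-3 restriction, this caps each type at three edges, and equality forces all three to share a common vertex as tail or as head (a ``hub''), since any mixed configuration fails the pairwise compatibility test. Because $G$ has 12 edges, the distribution is forced to be $(3,3,3,3)$ with four distinct hubs. Any edge between two hubs would require two different bond-edge types on its half-edges, so the four hubs are pairwise non-adjacent and therefore form a maximum independent set, which in the cube is exactly one side of the bipartition. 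Regardless of how the four hubs split into tail-hubs (tile $\{x^3\}$) and head-hubs (tile $\{\hat x^3\}$), each of the four non-hub vertices is adjacent to exactly three hubs. I would then exhibit a spectrum vector concentrating proportion $1/6$ on three of the hub tiles and $1/2$ on the unique non-hub tile whose arms complement those three hubs (and zero elsewhere); a direct check of the four bond-edge balance equations shows this gives a realization of $K_{3,3}$ on six vertices, contradicting Scenario 2. The witness construction goes through identically in all of the tail/head-hub configurations.

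For the upper bound, I would fix antipodal vertices $v_1,v_7$ of $G$ and design: bond-edge type $a$ on the three edges incident to $v_1$ with $v_1$ as tail-hub, type $b$ on the three edges incident to $v_7$ with $v_7$ as head-hub, and types $c,d,e$ each placed on the two middle-layer edges out of $v_2,v_3,v_5$ respectively. The resulting pot is
\[
P=\bigl\{\{a^3\},\,\{\hat a,c^2\},\,\{\hat a,d^2\},\,\{\hat c,\hat d,b\},\,\{\hat a,e^2\},\,\{\hat c,\hat e,b\},\,\{\hat b^3\},\,\{\hat d,\hat e,b\}\bigr\},
\]
with $\#\Sigma(P)=5$. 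Verifying Scenario 3 then requires: (i) $G\in\mathcal O(P)$, immediate from construction; (ii) summing the five balance equations forces $r_{v_1}=r_{v_7}=1/8$, so $8$ is the minimum order, and an enumeration of nonnegative integer solutions to the $c,d,e$ equations with $R_1=R_7=1$ isolates the uniform solution as the unique order-8 realization; (iii) with each tile used exactly once, the $c^2$, $d^2$, $e^2$ multiplicities force the middle-layer bonds uniquely, while the threefold $a$- and $b$-symmetries yield only swap-equivalent realizations by Lemma \ref{lemma:edgeswap}, all isomorphic to $G$.

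The main obstacle is the structural case analysis underlying the lower bound --- in particular, confirming that the distribution $(3,3,3,3)$ is forced, identifying the hubs with a bipartition class of the cube, and producing the $K_{3,3}$ spectrum witness in each tail/head-hub configuration. Each sub-configuration requires explicitly verifying the balance equations for the chosen proportion vector, which is routine but delicate because the structure of the non-hub tile depends on whether its missed hub is a tail- or head-type.
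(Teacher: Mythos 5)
Your proposal is correct, and its skeleton matches the paper's: both prove the lower bound by showing $\#\Sigma(P)\le 4$ forces four bond-edge types each on exactly three mutually incident edges and then exhibiting $K_{3,3}$ on six vertices from three ``hub'' tiles plus one complementary 3-armed tile, and both prove the upper bound by an explicit five-type pot checked via the spectrum. The differences are worth recording. On the lower bound, the paper simply asserts that ``the only possible labeling'' yields the pot $P' = \left\{\{a^3\},\{b^3\},\{c^3\},\{d^3\},\{\hat{a},\hat{b},\hat{c}\},\{\hat{a},\hat{b},\hat{d}\},\{\hat{a},\hat{c},\hat{d}\},\{\hat{b},\hat{c},\hat{d}\}\right\}$; you actually derive this: the pairwise-compatibility cap of three edges per type via Lemma \ref{lemma_CubeRestrictionRepeatBondType}, the hub structure, hub non-adjacency, and the identification of the hubs with a bipartition class, and you handle the tail-hub/head-hub orientation cases explicitly where the paper disposes of them implicitly by the relabeling $x \leftrightarrow \hat{x}$. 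Your case analysis is more work than strictly necessary (after relabeling, all hubs may be assumed unhatted), but it is sound, and your $\left\langle \tfrac{1}{6},\tfrac{1}{6},\tfrac{1}{6},\tfrac{1}{2}\right\rangle$ witness is exactly the paper's $K_{3,3}$. On the upper bound your pot genuinely differs from the paper's $P=\left\{\{a^3\},\{e^3\},\{b^2,\hat{a}\},\{c^2,\hat{b}\},\{d^2,\hat{b}\},\{\hat{a},\hat{c},\hat{e}\},\{\hat{c},\hat{d},\hat{e}\},\{\hat{a},\hat{d},\hat{e}\}\right\}$, and the trade-off is instructive: the paper's pot has spectrum the single ray $\frac{1}{8r}\langle r,\dots,r\rangle$, so order-8 minimality is immediate, whereas your pot's construction matrix has two degrees of freedom (only $r_{v_1}=r_{v_7}=1/8$ is pinned), so you correctly must run the integer enumeration at $n=8$ — and it does close, since $8r_4 = 3-2(8r_2)\ge 0$ and its analogues force the uniform solution; this is precisely the situation the program of Section \ref{sec:code} was built for. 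One small imprecision: merely ``summing the five balance equations'' does not yield $r_{v_1}=r_{v_7}=1/8$; you need the $c$-, $d$-, $e$-equations separately (giving $r_4+r_6+r_8=r_2+r_3+r_5$) together with the $a$- and $b$-equations and normalization, though the conclusion is right. Your forced-bond argument for the isomorphism check at order 8, with Lemma \ref{lemma:edgeswap} absorbing the arm permutations, plays the same role as the paper's appeal to the antipodal-repetition argument of Lemma \ref{lemma_CubeRestrictionRepeatBondType}.
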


\begin{proof}
Assume $B_3(G) \leq 4$. Then either at least one bond-edge type appears more than three times or four bond-edge types must each appear exactly three times. By Lemma \ref{lemma_CubeRestrictionRepeatBondType} only the latter is possible, and repeated bond-edge types must appear on incident edges. In this case the only possible labeling of $G$ results in the following pot: $$P' = \left\{\{a^3\}, \{b^3\}, \{c^3\}, \{d^3\}, \{\hat{a}, \hat{b}, \hat{c} \}, \{\hat{a}, \hat{b}, \hat{d} \}, \{\hat{a}, \hat{c}, \hat{d} \}, \{\hat{b}, \hat{c}, \hat{d} \}\right\}.$$ However, there exists a graph $ H \in \mathcal{O}(P')$ such that $\# V(H) = 6$, namely $K_{(3,3)}$ using tiles $\{a^3\}, \{b^3\}, \{c^3\}$, and $\{\hat{a}, \hat{b}, \hat{c} \}$.

Thus $B_3(G) \geq 5$. The following pot $P$ with $\#\Sigma(P) = 5$ realizes the cube as shown in Figure \ref{fig:CubeB3}:
$$P=\left\{\{a^3\}, \{e^3\}, \{b^2,\hat{a}\}, \{c^2,\hat{b}\}, \{d^2,\hat{b}\},\{\hat{a},\hat{c},\hat{e}\},\{\hat{c},\hat{d},\hat{e}\},\{\hat{a},\hat{d},\hat{e}\}\right\}.$$ The spectrum $\mathcal{S}(P)=\left\{\frac{1}{8r} \langle r,r,r,r,r,r, r, r \rangle | r \in \mathbb{Z}\right\}$ shows that $\# V(H) \geq 8$ for all $H \in \mathcal{O}(P)$. Because the only repeated bond-edge types are on edges $(v_i,v_j)$ and $(v_k, v_m)$ satisfying $d(v_i,v_j)=(v_k, v_m)=3$, then as in the argument for Lemma \ref{lemma_CubeRestrictionRepeatBondType}, if any edges break and re-attach the resulting graph is isomorphic to $G$. 
\end{proof}
     \begin{figure}
      \centering
      \includegraphics[scale=.7]{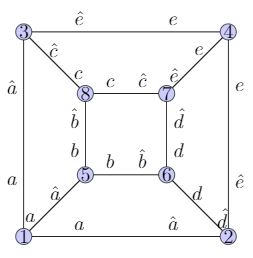}
      \caption{Pot realizing the cube in Proposition \ref{prop:CubeB3}}
      \label{fig:CubeB3}
  \end{figure}

Note that the cube graph is an example in which the pots that achieve the minimum number of tile types and the minimum number of bond-edge types are different. Whether or not there exists a pot that can achieve both minima simultaneously remains an open question; however, we conjecture that no such pot exists.

\subsection{Square Lattice Graphs and Isomorphism Limitations of the Construction Matrix} \label{sec:squarelattice}

We study square and triangular lattice graphs and tube graphs with DNA meshes and mesh tubes in mind. Many of the examples of lattice and tube graphs we provide also illustrate the difficulty in determining optimal solutions, even in the least restrictive scenario. In this section, we provide Scenario 1 pots for all square lattices. We conclude with an example where non-isomorphic graphs of the same size are realized by the same pot, thus illustrating a limitation of the construction matrix. 

\begin{definition} A \textit{lattice graph}, or \textit{grid graph}, is a graph whose embedding in $\mathbb{R}^2$ forms a regular tiling. \end{definition}

\begin{definition} The $m \times n$ \emph{square lattice graph} is the graph Cartesian product $P_m \times P_n$ of path graphs on $m$ and $n$ vertices, where $\# V(G) = mn$ and the number of 4-cycles is  $(m-1)(n-1)$. \end{definition}

In \cite{ellis2014minimal}, it was shown that for all graphs, $av(G) \leq T_1(G) \leq ov(G) + 2ev(G)$, where $av(G)$ is the length of the valency sequence of $G$, $ov(G)$ is the length of the odd valency sequence of $G$, and $ev(G)$ is the length of the even valency sequence of $G$. For $m \times n$ square lattice graphs with $m>2$ and $n>2$, $3 \leq T_1(G) \leq 4$. For $2 \times n$ or $m \times 2$ square lattice graphs, $2 \leq T_1(G) \leq 3$. Note that the $2 \times 2$ square lattice graph is simply $C_4$ and we do not consider this graph as a proper square lattice. The results below find explicit pots of tiles that will realize a square lattice graphs of any size.

\begin{proposition}\label{props1mostlattices} Let $G$ be a square lattice graph of size $m \times n$ with $m,n >2$ and $\{m,n\} \not \in \{\{3,3\}, \{3,5\}, \{4,4\}\}$. Then $T_1(G)=4$. \end{proposition}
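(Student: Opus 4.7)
I would prove $T_1(G)=4$ in two steps. Since $3\le T_1(G)\le 4$ is already noted in the excerpt for all square lattice graphs with $m,n>2$, it suffices to construct a pot with $4$ tile types (upper bound) and rule out every pot with $3$ tile types (lower bound).

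\emph{Upper bound.} Use a single bond-edge type $a$ and a pot $P$ with the four tiles $\{a,\hat a\}$, $\{a^2,\hat a\}$, $\{a,\hat a^2\}$, and $\{a^2,\hat a^2\}$. The corresponding assembly design orients the four boundary sides of $G$ cyclically (top rightward, right downward, bottom leftward, left upward), and orients every non-boundary horizontal edge rightward and every non-boundary vertical edge downward. A direct local check at each vertex shows that each corner receives $\{a,\hat a\}$, each non-corner on the top or left side receives $\{a^2,\hat a\}$, each non-corner on the bottom or right side receives $\{a,\hat a^2\}$, and each interior vertex receives $\{a^2,\hat a^2\}$, yielding exactly $4$ tile types.

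\emph{Lower bound.} Suppose for contradiction $T_1(G)=3$. By Lemma~\ref{onebondlemma} we may take $\#\Sigma(P)=1$, and since $G$ has vertices of degrees $2,3,4$, the three tile types must have exactly those degrees, so write $T_k=\{a^{p_k},\hat a^{k-p_k}\}$ for $k=2,3,4$. Three necessary conditions govern $(p_2,p_3,p_4)$: (i) the balance equation from equating total $a$-arms with $\#E(G)$,
\[
(m-2)(n-2)(p_4-2)+(m+n-4)(2p_3-3)+4(p_2-1)=0,
\]
(ii) adjacency of non-corner boundary vertices (resp.\ of interior vertices) in every non-excluded case, forcing $p_3\in\{1,2\}$ and $p_4\in\{1,2,3\}$, and (iii) an interior/boundary half-edge counting argument: the $(m-2)(n-2)p_4$ $a$-arms on interior vertices must cover all $2(m-2)(n-2)-(m-2)-(n-2)$ interior--interior edges and a nonnegative number of interior--boundary edges, and symmetrically for $\hat a$-arms, giving
\[
(m-2)(n-2)\,|p_4-2|\le (m-2)+(n-2).
\]
I would then split on $p_4$. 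If $p_4=2$, (i) forces $m+n=8$ with $|p_2-1|=1$, yielding only $(m,n)\in\{(3,5),(4,4),(5,3)\}$, all excluded. If $|p_4-2|=1$ with $m,n\ge 4$, (iii) gives $(m-3)(n-3)\le 1$, forcing $(m,n)=(4,4)$, excluded. If $|p_4-2|=1$ with $\min(m,n)=3$, then without loss of generality $m=3$, $n\ge 4$, $n\ne 5$, and (i) becomes $\pm(n-2)+(n-1)(2p_3-3)+4(p_2-1)=0$; reducing modulo $2$, the left side is $\equiv n+(n-1)\equiv 1\pmod 2$ while the right side is $0$, a contradiction. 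No case survives, so $T_1(G)\ge 4$.

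\emph{Main obstacle.} The balance equation alone is not strong enough: for instance $(m,n)=(4,8)$ with $(p_2,p_3,p_4)=(0,1,3)$ satisfies it. The crucial auxiliary tool is the interior/boundary half-edge counting inequality in (iii), which eliminates all such spurious dimensions when $m,n\ge 4$; the remaining $\min(m,n)=3$ branch is then closed by a simple mod-$2$ parity check on (i). Carefully articulating (iii) and matching its case split with that of the balance equation is the main technical work.
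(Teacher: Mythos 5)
Your proof is correct, and its skeleton matches the paper's: reduce to a single bond-edge type (Lemma~\ref{onebondlemma}), note that a 3-tile pot must consist of exactly one 2-, one 3-, and one 4-armed tile, write down the balance equation (your (i) is the paper's Equation~\ref{eq:sqlattice1} divided by two, with $z_{1,1}=2(p_2-1)$, $z_{1,2}=2p_3-3$, $z_{1,3}=2(p_4-2)$), and constrain $p_3\in\{1,2\}$, $p_4\in\{1,2,3\}$ via adjacency of like-degree vertices, which is the paper's $z_{1,2}\in\{\pm 1\}$, $z_{1,3}\in\{0,\pm 2\}$; your upper-bound pot is literally the paper's $\left\{\{a,\hat a\},\{a,\hat a^2\},\{a^2,\hat a\},\{a^2,\hat a^2\}\right\}$, to which you add the explicit orientation the paper omits. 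Where you genuinely diverge is the endgame of the lower bound. The paper enumerates the integer solutions of the balance equation, obtaining $(3,5)$, $(4,4)$, and the spurious survivor $(4,8)$, and then dismisses the $4\times 8$ case with only the assertion that it is ``straightforward to verify'' that no pot satisfying the equation realizes that lattice. Your interior/boundary counting inequality $(m-2)(n-2)\,|p_4-2|\le (m-2)+(n-2)$, derived by comparing the interior $a$- and $\hat a$-arms against the interior--interior edges (each consuming one of each) and the $2(m-2)+2(n-2)$ interior--boundary edges, eliminates all $|p_4-2|=1$ cases with $m,n\ge 4$ uniformly --- in particular exactly the $(4,8)$, $(p_2,p_3,p_4)=(0,1,3)$ configuration the paper leaves unverified --- and your mod-2 parity check cleanly closes the $m=3$ branch, where $(n-2)+(n-1)$ is odd. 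The trade-off: your route is fully self-contained and arguably repairs the one hand-waved step in the paper's proof, while the paper's Diophantine enumeration is shorter and makes the exceptional dimensions $(3,5)$ and $(4,4)$ visible directly as solutions of the equation. Both case analyses are exhaustive (the only point worth double-checking, which you implicitly handle, is that every non-excluded case has $\max(m,n)\ge 4$, so the adjacency constraints in (ii) are available), so the conclusion $T_1(G)=4$ stands.
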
 
\begin{proof} 
 Assume $G \in \mathcal{O}(P)$ with $\#P = 3$. Let $t_i \in P$ denote the $i+1$-armed tile. The following equation must be satisfied:
\begin{equation} \label{eq:sqlattice1} 4z_{1,1}+2(m+n-4)z_{1,2}+(m-2)(n-2)z_{1,3} = 0. \end{equation}
Note that if $m>3$ or $n>3$ then at least one pair of  vertices of degree three will be adjacent and at least one pair of vertices of degree four will be adjacent in $G$, so $z_{1,2} \in \{\pm 1\}$  and $z_{1,3} \in \{0,\pm 2\}$. Since $t_1$ is a  2-armed tile, $z_{1,1} \in \{0,\pm 2\}$ and $4z_{1,1} \in \{0,\pm 8\}$.\par
In order for Equation \ref{eq:sqlattice1} to hold, $2(m+n-4)z_{1,2}+(m-2)(n-2)z_{1,3} \in \{0, \pm 8\}$. \par
If $2(n+m-4)z_{1,2}+(m-2)(n-2)z_{1,3} = 0$, then $z_{1,3} \neq 0$, hence $|(n+m-4)|=|(m-2)(n-2)|$. The only integer solution pair to this equation with $m>2, n>2$ is $(4,4)$. If $2(n+m-4)z_{1,2}+(m-2)(n-2)z_{1,3} = \pm 8$, the integer solution pairs with $m>2, n>2$ are $(3,5),(4,4)$, and $(4,8)$.  It is straightforward to verify that any pot satisfying Equation \ref{eq:sqlattice1} can not realize the $4 \times 8$ square lattice graph. \par
The pot $P = \left\{\{a,\hat{a}\},\{a,\hat{a}^2\},\{a^2,\hat{a}\},\{a^2,\hat{a}^2\}\right\}$ realizes all square lattice graphs of size $m \times n$ with $m, n > 2$ and $\{m,n\} \not \in \{\{3,3\}, \{3,5\}, \{4,4\}\}$.
\end{proof} 

\begin{example}
Realization of the pot given in Proposition \ref{props1mostlattices} for the $5 \times 5$ square lattice graph is shown in Figure \ref{fig:5x5lattice}.

\begin{figure}[h] \centering
\begin{tikzpicture}[transform shape, scale =.85]
        \draw(0,0)--node[below]{$a$}(1,0);
        \draw(1,0)--node[below]{$\hat{a}$}(2,0);
        \draw(2,0)--node[below]{$\hat{a}$}(3,0);
        \draw(3,0)--node[below]{$a$}(4,0);
        \draw(4,0)--node[below]{$a$}(5,0);
        \draw(5,0)--node[below]{$\hat{a}$}(6,0);
        \draw(6,0)--node[below]{$\hat{a}$}(7,0);
        \draw(7,0)--node[below]{$a$}(8,0);
        \draw(0,0)--node[left]{$\hat{a}$}(0,1);
        \draw(0,1)--node[left]{$a$}(0,2); 
        \draw(0,2)--node[left]{$a$}(0,3);
        \draw(0,3)--node[left]{$\hat{a}$}(0,4);
        \draw(0,4)--node[left]{$\hat{a}$}(0,5);
        \draw(0,5)--node[left]{$a$}(0,6);
        \draw(0,6)--node[left]{$a$}(0,7);
        \draw(0,7)--node[left]{$\hat{a}$}(0,8);
        \draw(0,8)--node[above]{$a$}(1,8);
        \draw(1,8)--node[above]{$\hat{a}$}(2,8);
        \draw(2,8)--node[above]{$\hat{a}$}(3,8);
        \draw(3,8)--node[above]{$a$}(4,8);
        \draw(4,8)--node[above]{$a$}(5,8);
        \draw(5,8)--node[above]{$\hat{a}$}(6,8);
        \draw(6,8)--node[above]{$\hat{a}$}(7,8);
        \draw(7,8)--node[above]{$a$}(8,8);
        \draw(8,0)--node[right]{$\hat{a}$}(8,1);
        \draw(8,1)--node[right]{$a$}(8,2);
        \draw(8,2)--node[right]{$a$}(8,3);
        \draw(8,3)--node[right]{$\hat{a}$}(8,4);
        \draw(8,4)--node[right]{$\hat{a}$}(8,5);
        \draw(8,5)--node[right]{$a$}(8,6);
        \draw(8,6)--node[right]{$a$}(8,7);
        \draw(8,7)--node[right]{$\hat{a}$}(8,8);
        \draw(2,0)--node[left]{$a$}(2,1);
        \draw(2,1)--node[left]{$\hat{a}$}(2,2);
        \draw(4,0)--node[left]{$\hat{a}$}(4,1);
        \draw(4,1)--node[left]{$a$}(4,2);
        \draw(0,2)--node[below]{$\hat{a}$}(1,2);
        \draw(1,2)--node[above]{$a$}(2,2);
        \draw(2,2)--node[below]{$a$}(3,2);
        \draw(3,2)--node[above]{$\hat{a}$}(4,2);
        \draw(4,2)--node[below]{$\hat{a}$}(5,2);
        \draw(5,2)--node[above]{$a$}(6,2);
        \draw(6,0)--node[left]{$a$}(6,1);
        \draw(6,1)--node[left]{$\hat{a}$}(6,2);
        \draw(6,2)--node[below]{$a$}(7,2);
        \draw(7,2)--node[below]{$\hat{a}$}(8,2);
        \draw(2,2)--node[right]{$\hat{a}$}(2,3);
        \draw(2,3)--node[left]{$a$}(2,4);
        \draw(4,2)--node[right]{$a$}(4,3);
        \draw(4,3)--node[left]{$\hat{a}$}(4,4);
        \draw(0,4)--node[below]{$a$}(1,4);
        \draw(1,4)--node[above]{$\hat{a}$}(2,4);
        \draw(2,4)--node[below]{$\hat{a}$}(3,4);
        \draw(3,4)--node[above]{$a$}(4,4);
        \draw(4,4)--node[below]{$a$}(5,4);
        \draw(5,4)--node[above]{$\hat{a}$}(6,4);
        \draw(6,2)--node[right]{$\hat{a}$}(6,3);
        \draw(6,3)--node[left]{$a$}(6,4);
        \draw(6,4)--node[below]{$\hat{a}$}(7,4);
        \draw(7,4)--node[below]{$a$}(8,4);
        \draw(2,4)--node[right]{$a$}(2,5);
        \draw(2,5)--node[left]{$\hat{a}$}(2,6);
        \draw(4,4)--node[right]{$\hat{a}$}(4,5);
        \draw(4,5)--node[left]{$a$}(4,6);
        \draw(6,4)--node[right]{$a$}(6,5);
        \draw(6,5)--node[left]{$\hat{a}$}(6,6);
        \draw(6,6)--node[below]{$a$}(7,6);
        \draw(7,6)--node[below]{$\hat{a}$}(8,6);
        \draw(0,6)--node[above]{$\hat{a}$}(1,6);
        \draw(1,6)--node[above]{$a$}(2,6);
        \draw(2,6)--node[below]{$a$}(3,6);
        \draw(3,6)--node[above]{$\hat{a}$}(4,6);
        \draw(4,6)--node[below]{$\hat{a}$}(5,6);
        \draw(5,6)--node[above]{$a$}(6,6);
        \draw(2,6)--node[right]{$\hat{a}$}(2,7);
        \draw(2,7)--node[right]{$a$}(2,8);
        \draw(4,6)--node[right]{$a$}(4,7);
        \draw(4,7)--node[right]{$\hat{a}$}(4,8);
        \draw(6,6)--node[right]{$\hat{a}$}(6,7);
        \draw(6,7)--node[right]{$a$}(6,8);
        
        \filldraw (0,0) circle [radius=3pt];
        \filldraw (0,2) circle [radius=3pt];
        \filldraw (0,4) circle [radius=3pt];
        \filldraw (0,6) circle [radius=3pt];
        \filldraw (0,8) circle [radius=3pt];
        \filldraw (2,0) circle [radius=3pt];
        \filldraw (2,2) circle [radius=3pt];
        \filldraw (2,4) circle [radius=3pt];
        \filldraw (2,6) circle [radius=3pt];
        \filldraw (2,8) circle [radius=3pt];
        \filldraw (4,0) circle [radius=3pt];
        \filldraw (4,2) circle [radius=3pt];
        \filldraw (4,4) circle [radius=3pt];
        \filldraw (4,6) circle [radius=3pt];
        \filldraw (4,8) circle [radius=3pt];
        \filldraw (6,0) circle [radius=3pt];
        \filldraw (6,2) circle [radius=3pt];
        \filldraw (6,4) circle [radius=3pt];
        \filldraw (6,6) circle [radius=3pt];
        \filldraw (6,8) circle [radius=3pt];
        \filldraw (8,0) circle [radius=3pt];
        \filldraw (8,2) circle [radius=3pt];
        \filldraw (8,4) circle [radius=3pt];
        \filldraw (8,6) circle [radius=3pt];
        \filldraw (8,8) circle [radius=3pt];
        \end{tikzpicture} \caption{Scenario 1 assembly design of $5 \times 5$ square lattice graph}\label{fig:5x5lattice} \end{figure}
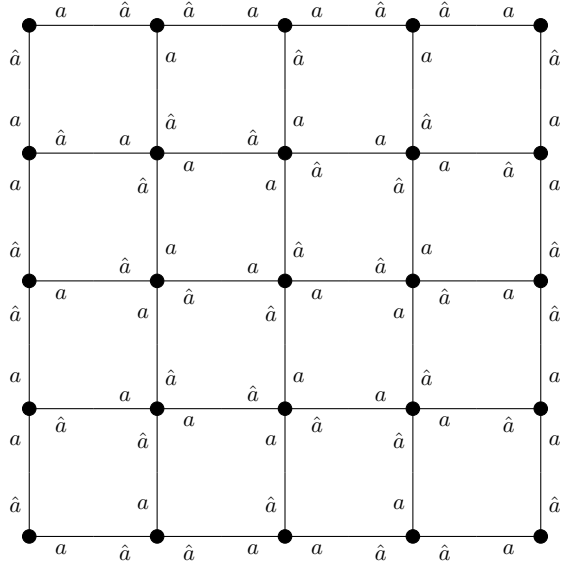 
\end{example}

\begin{proposition} If $G$ is a square lattice graph of size $3 \times 3$, $3 \times 5$, or $4 \times 4$ then $T_1(G)=3$. \end{proposition}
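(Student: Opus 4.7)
The plan has two ingredients: the lower bound $T_1(G)\geq 3$ and, for each of the three lattices, an explicit 3-tile pot with a matching assembly design. The lower bound is immediate from the inequality $T_1(G)\geq av(G)$ recalled at the start of this subsection: each of these lattices has vertices of exactly three distinct degrees ($2$ at corners, $3$ at boundary midpoints, $4$ at interior vertices), so $av(G)=3$.

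For the upper bound, Lemma~\ref{onebondlemma} lets me restrict to pots $P$ with a single bond-edge type $a$, so $P$ consists of one tile of each valency $d\in\{2,3,4\}$, each of the form $\{a^{k_d},\hat a^{d-k_d}\}$. The net-cohesive-end analysis already carried out for Equation~(\ref{eq:sqlattice1}) in the proof of Proposition~\ref{props1mostlattices} shows that $(m,n)\in\{(3,3),(3,5),(4,4),(4,8)\}$ are the only dimensions for which that linear constraint can be satisfied, and $4\times 8$ has been excluded there. In each of the three remaining cases I read off from that analysis the admissible choices of $(z_{1,1},z_{1,2},z_{1,3})$ and single out a candidate pot.

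What remains is to exhibit an assembly design realizing the target lattice. I will view a labelling as an orientation of the edges of $G$, from the $a$-side to the $\hat a$-side; a vertex $v$ assigned the tile $\{a^{k},\hat a^{d-k}\}$ must then have out-degree $k$ and in-degree $d-k$. For the $3\times 3$ lattice I propose $P=\{\{a,\hat a\},\{a^2,\hat a\},\{\hat a^4\}\}$, placing the degree-four tile at the center so that every boundary midpoint already has one outgoing arm into the center and only needs one further outgoing and one incoming arm on the outer $8$-cycle, which is supplied by orienting the boundary as a directed cycle. For the $3\times 5$ lattice I propose $P=\{\{\hat a^2\},\{a^2,\hat a\},\{a^2,\hat a^2\}\}$: orient both edges at each corner as incoming, send the top and bottom boundary rows outward from their central midpoint, orient the two middle-column edges and the two inner horizontal edges so that each of the three interior vertices has in-degree $2$ and out-degree $2$, and finish with the two degree-three midpoints in the first and last columns. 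For the $4\times 4$ lattice I propose $P=\{\{a,\hat a\},\{a^2,\hat a\},\{a,\hat a^3\}\}$, orienting the outer $12$-cycle consistently and then directing the eight remaining edges so that each interior vertex receives three incoming and one outgoing arm.

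The main obstacle is the orientation step in the $4\times 4$ case, where no global symmetry simultaneously produces the three required in/out degree profiles, so the orientation must be constructed by local consistency. My plan is to fix the boundary orientation first (which determines both arms of every corner and one arm at every boundary midpoint), then extend into the interior one vertex at a time, at each step choosing edge directions compatible with the remaining in/out budgets of both endpoints; feasibility follows because the total in- and out-demand is balanced across the graph by the choice of tiles. Since only Scenario 1 is required, no spectrum analysis is needed, and once the orientation is produced, verifying each vertex's arms against the prescribed tile is a routine check.
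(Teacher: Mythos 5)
Your skeleton is the same as the paper's: the lower bound $T_1(G)\geq 3$ from $av(G)=3$ (all three lattices have vertices of degrees $2$, $3$, and $4$), and the upper bound by exhibiting a three-tile pot with an explicit assembly design, which is all Scenario 1 demands. The difference is in the pots: the paper uses the single pot $P = \left\{ \{a^2\}, \{a,\hat{a}^2\}, \{a^2,\hat{a}^2\} \right\}$ for all three lattices, giving the designs as figures (and citing prior work for the $3\times 3$ case), whereas you tailor a different pot to each lattice and verify by orientations. That is a legitimate variation; your orientation formulation (a vertex assigned $\{a^k,\hat{a}^{d-k}\}$ must have out-degree $k$ and in-degree $d-k$) is a clean figure-free way to certify designs, and your $3\times 3$ and $3\times 5$ constructions check out completely: for $3\times 3$, directing the outer $8$-cycle cyclically and all four spokes into the center gives corners in/out $(1,1)$, midpoints out $2$ in $1$, and center in $4$, matching $\{\{a,\hat{a}\},\{a^2,\hat{a}\},\{\hat{a}^4\}\}$; your $3\times 5$ recipe likewise closes up consistently. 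One small caution: the ranges for $(z_{1,1},z_{1,2},z_{1,3})$ in the proof of Proposition \ref{props1mostlattices} were derived assuming $m>3$ or $n>3$, so they cannot literally be ``read off'' for the $3\times 3$ case --- indeed your tile $\{\hat{a}^4\}$ has $z_{1,3}=-4$, outside those ranges --- but since you only need existence of some pot, this does no harm. The appeal to Lemma \ref{onebondlemma} is likewise unnecessary for an upper bound, though harmless.

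The one step you must repair is the feasibility argument for the $4\times 4$ orientation. Your claim that ``feasibility follows because the total in- and out-demand is balanced across the graph'' is invalid: balance is necessary but not sufficient for an orientation with prescribed out-degrees to exist, and the paper's own treatment of the $4\times 8$ lattice is precisely a counterexample to that principle --- Equation \ref{eq:sqlattice1} is satisfiable there, yet no three-tile pot realizes that lattice. A greedy ``local consistency'' extension justified only by global balance could fail without backtracking. Fortunately no search is needed, because your own sketch completes directly: orient the outer $12$-cycle cyclically (each boundary vertex gets one in and one out), orient all eight spoke edges from boundary midpoints into the interior (each midpoint then has out-degree $2$ and in-degree $1$, matching $\{a^2,\hat{a}\}$, and each interior vertex receives two incoming spokes), and orient the interior $4$-cycle cyclically (each interior vertex then has in-degree $3$ and out-degree $1$, matching $\{a,\hat{a}^3\}$). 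Note also that twelve edges remain after the outer cycle (eight spokes plus four interior-cycle edges), not eight as you wrote. With that explicit orientation substituted for the balance argument, your proof is complete.
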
 
\begin{proof} 
$G \in \mathcal{O}(P)$ where $P = \left\{ \{ a^2 \}, \{a, \hat{a}^2\}, \{a^2, \hat{a}^2\} \right\}$. Realizations of the pot $P$ for the $4 \times 4$ and $3 \times 5$ square lattices are illustrated in Figures \ref{fig:4x4lattice} and \ref{fig:3x5lattice}, respectively. The square lattice graph of size $3 \times 3$ is also known as the gear graph $G_4$. Detailed results for this graph are given in \cite{Mattamira}. 
\end{proof} 

 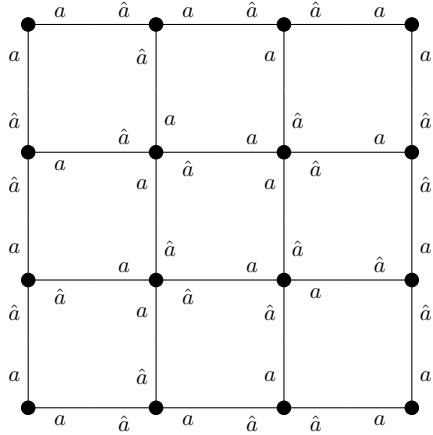
\begin{figure}[h] \centering
\begin{tikzpicture}[transform shape, scale=.85]
        \draw(0,0)--node[below]{$a$}(1,0);
        \draw(1,0)--node[below]{$\hat{a}$}(2,0);
        \draw(2,0)--node[below]{$a$}(3,0);
        \draw(3,0)--node[below]{$\hat{a}$}(4,0);
        \draw(4,0)--node[below]{$\hat{a}$}(5,0);
        \draw(5,0)--node[below]{$a$}(6,0);
        \draw(0,0)--node[left]{$a$}(0,1);
        \draw(0,1)--node[left]{$\hat{a}$}(0,2); 
        \draw(0,2)--node[left]{$a$}(0,3);
        \draw(0,3)--node[left]{$\hat{a}$}(0,4);
        \draw(0,4)--node[left]{$\hat{a}$}(0,5);
        \draw(0,5)--node[left]{$a$}(0,6);
        \draw(0,6)--node[above]{$a$}(1,6);
        \draw(1,6)--node[above]{$\hat{a}$}(2,6);
        \draw(2,6)--node[above]{$a$}(3,6);
        \draw(3,6)--node[above]{$\hat{a}$}(4,6);
        \draw(4,6)--node[above]{$\hat{a}$}(5,6);
        \draw(5,6)--node[above]{$a$}(6,6);
        \draw(6,0)--node[right]{$a$}(6,1);
        \draw(6,1)--node[right]{$\hat{a}$}(6,2);
        \draw(6,2)--node[right]{$a$}(6,3);
        \draw(6,3)--node[right]{$\hat{a}$}(6,4);
        \draw(6,4)--node[right]{$\hat{a}$}(6,5);
        \draw(6,5)--node[right]{$a$}(6,6);
        \draw(2,0)--node[left]{$\hat{a}$}(2,1);
        \draw(2,1)--node[left]{$a$}(2,2);
        \draw(4,0)--node[left]{$a$}(4,1);
        \draw(4,1)--node[left]{$\hat{a}$}(4,2);
        \draw(0,2)--node[below]{$\hat{a}$}(1,2);
        \draw(1,2)--node[above]{$a$}(2,2);
        \draw(2,2)--node[below]{$\hat{a}$}(3,2);
        \draw(3,2)--node[above]{$a$}(4,2);
        \draw(4,2)--node[below]{$a$}(5,2);
        \draw(5,2)--node[above]{$\hat{a}$}(6,2);
        \draw(2,2)--node[right]{$\hat{a}$}(2,3);
        \draw(2,3)--node[left]{$a$}(2,4);
        \draw(4,2)--node[right]{$\hat{a}$}(4,3);
        \draw(4,3)--node[left]{$a$}(4,4);
        \draw(0,4)--node[below]{$a$}(1,4);
        \draw(1,4)--node[above]{$\hat{a}$}(2,4);
        \draw(2,4)--node[below]{$\hat{a}$}(3,4);
        \draw(3,4)--node[above]{$a$}(4,4);
        \draw(4,4)--node[below]{$\hat{a}$}(5,4);
        \draw(5,4)--node[above]{$a$}(6,4);
        \draw(2,4)--node[right]{$a$}(2,5);
        \draw(2,5)--node[left]{$\hat{a}$}(2,6);
        \draw(4,4)--node[right]{$\hat{a}$}(4,5);
        \draw(4,5)--node[left]{$a$}(4,6);
        
        \filldraw (0,0) circle [radius=3pt];
        \filldraw (0,2) circle [radius=3pt];
        \filldraw (0,4) circle [radius=3pt];
        \filldraw (2,0) circle [radius=3pt];
        \filldraw (2,2) circle [radius=3pt];
        \filldraw (2,4) circle [radius=3pt];
        \filldraw (4,0) circle [radius=3pt];
        \filldraw (4,2) circle [radius=3pt];
        \filldraw (4,4) circle [radius=3pt];
        \filldraw (6,0) circle [radius=3pt];
        \filldraw (6,2) circle [radius=3pt];
        \filldraw (6,4) circle [radius=3pt];
        \filldraw (0,6) circle [radius=3pt];
        \filldraw (2,6) circle [radius=3pt];
        \filldraw (4,6) circle [radius=3pt];
        \filldraw (6,6) circle [radius=3pt];
        \end{tikzpicture} \caption{Scenario 1 assembly design of $4 \times 4$ square lattice graph}\label{fig:4x4lattice} \end{figure} 
\begin{figure}[h]  \centering
\begin{tikzpicture}[transform shape, scale=.85]
        \draw(0,0)--node[below]{$a$}(1,0);
        \draw(1,0)--node[below]{$\hat{a}$}(2,0);
        \draw(2,0)--node[below]{$\hat{a}$}(3,0);
        \draw(3,0)--node[below]{$a$}(4,0);
        \draw(4,0)--node[below]{$\hat{a}$}(5,0);
        \draw(5,0)--node[below]{$a$}(6,0);
        \draw(6,0)--node[below]{$\hat{a}$}(7,0);
        \draw(7,0)--node[below]{$a$}(8,0);
        \draw(0,0)--node[left]{$a$}(0,1);
        \draw(0,1)--node[left]{$\hat{a}$}(0,2); 
        \draw(0,2)--node[left]{$\hat{a}$}(0,3);
        \draw(0,3)--node[left]{$a$}(0,4);
        \draw(8,0)--node[right]{$a$}(8,1);
        \draw(8,1)--node[right]{$\hat{a}$}(8,2);
        \draw(8,2)--node[right]{$\hat{a}$}(8,3);
        \draw(8,3)--node[right]{$a$}(8,4);
        \draw(2,0)--node[left]{$a$}(2,1);
        \draw(2,1)--node[left]{$\hat{a}$}(2,2);
        \draw(4,0)--node[left]{$\hat{a}$}(4,1);
        \draw(4,1)--node[left]{$a$}(4,2);
        \draw(0,2)--node[below]{$a$}(1,2);
        \draw(1,2)--node[above]{$\hat{a}$}(2,2);
        \draw(2,2)--node[below]{$a$}(3,2);
        \draw(3,2)--node[above]{$\hat{a}$}(4,2);
        \draw(4,2)--node[below]{$a$}(5,2);
        \draw(5,2)--node[above]{$\hat{a}$}(6,2);
        \draw(6,0)--node[left]{$\hat{a}$}(6,1);
        \draw(6,1)--node[left]{$a$}(6,2);
        \draw(6,2)--node[below]{$\hat{a}$}(7,2);
        \draw(7,2)--node[above]{$a$}(8,2);
        \draw(2,2)--node[right]{$a$}(2,3);
        \draw(2,3)--node[left]{$\hat{a}$}(2,4);
        \draw(4,2)--node[right]{$\hat{a}$}(4,3);
        \draw(4,3)--node[left]{$a$}(4,4);
        \draw(0,4)--node[above]{$a$}(1,4);
        \draw(1,4)--node[above]{$\hat{a}$}(2,4);
        \draw(2,4)--node[above]{$a$}(3,4);
        \draw(3,4)--node[above]{$\hat{a}$}(4,4);
        \draw(4,4)--node[above]{$\hat{a}$}(5,4);
        \draw(5,4)--node[above]{$a$}(6,4);
        \draw(6,2)--node[right]{$a$}(6,3);
        \draw(6,3)--node[left]{$\hat{a}$}(6,4);
        \draw(6,4)--node[above]{$\hat{a}$}(7,4);
        \draw(7,4)--node[above]{$a$}(8,4);
   
        \filldraw (0,0) circle [radius=3pt];
        \filldraw (0,2) circle [radius=3pt];
        \filldraw (0,4) circle [radius=3pt];
        \filldraw (2,0) circle [radius=3pt];
        \filldraw (2,2) circle [radius=3pt];
        \filldraw (2,4) circle [radius=3pt];
        \filldraw (4,0) circle [radius=3pt];
        \filldraw (4,2) circle [radius=3pt];
        \filldraw (4,4) circle [radius=3pt];
        \filldraw (6,0) circle [radius=3pt];
        \filldraw (6,2) circle [radius=3pt];
        \filldraw (6,4) circle [radius=3pt];
        \filldraw (8,0) circle [radius=3pt];
        \filldraw (8,2) circle [radius=3pt];
        \filldraw (8,4) circle [radius=3pt];
        \end{tikzpicture} \caption{Scenario 1 assembly design of $3 \times 5$ square lattice graph}\label{fig:3x5lattice} \end{figure}
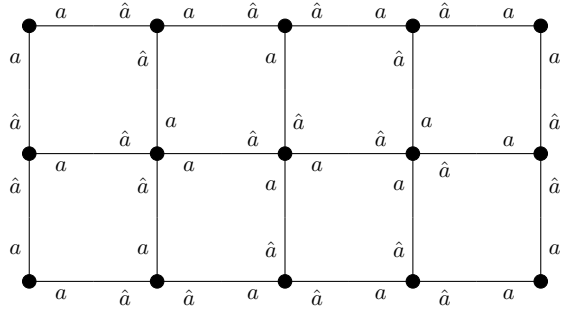 
 
The following proposition is specifically for $2 \times n$ square lattice graphs, which are sometimes referred to as ``ladder'' graphs.
\begin{proposition} Let $G$ be a square lattice graph of size $2 \times n$ with $n>2$, then $T_1(G)=3$. \end{proposition}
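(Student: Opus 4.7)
The plan is to prove the lower bound $T_1(G) \geq 3$ by exhaustively ruling out two-tile pots, and then to exhibit an explicit three-tile pot realizing $G$ for every $n > 2$.

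For the lower bound, I would suppose $\#P = 2$. By Lemma~\ref{onebondlemma} I may assume $\Sigma(P) = \{a\}$. Since $G$ has four corners of degree $2$ and $2n-4$ interior vertices of degree $3$, and since a tile's arm count must match its vertex's degree, $P$ must consist of exactly one $2$-armed tile $t_1$ and one $3$-armed tile $t_2$. I would then case-split on $t_1$. If $t_1 \in \{\{a^2\}, \{\hat{a}^2\}\}$, every corner carries two arms of the same cohesive-end type, so the leftmost side rung of $G$, which joins two corners, would have identically-labeled half-edges at both ends, contradicting bond-edge complementarity. If $t_1 = \{a, \hat{a}\}$, then $t_1$'s net unhatted-minus-hatted count is $z_1 = 0$, so the single balance equation from $M(P)$ reduces to $(2n-4)\, z_2 = 0$; since $n > 2$ this forces $z_2 = 0$, but $z_2 = 2x_2 - 3$ is odd for every $x_2 \in \{0,1,2,3\}$, a contradiction. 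Hence $T_1(G) \geq 3$.

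For the upper bound I would take $P = \{\{a,\hat{a}\}, \{a^3\}, \{\hat{a}^3\}\}$, assigning $\{a,\hat{a}\}$ to each of the four corners. The subgraph $G'$ induced by the $2n-4$ interior vertices is a $2 \times (n-2)$ sub-ladder (just a single edge when $n = 3$), hence bipartite; I would assign $\{a^3\}$ to one color class of $G'$ and $\{\hat{a}^3\}$ to the other. Every interior-interior edge then automatically bonds an $a$ arm to a $\hat{a}$ arm, and every corner-interior edge forces the corner's arm to be the complement of the interior's homogeneous arm. For each of the two side rungs (corner-corner edges), the two interior vertices adjacent to the rung's corners are themselves adjacent in $G'$, hence lie in opposite color classes, which forces complementary labels at the two ends of the rung.

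The main obstacle is this final consistency check at the two side rungs: because each corner carries mixed labels $\{a,\hat{a}\}$, its rung-arm is determined only indirectly via propagation through the interior coloring, so one must verify that both ends of the rung end up complementary. The bipartition argument above handles this uniformly in $n$, but it is the only step where the combinatorics of the ladder—as opposed to a generic degree-$2$/degree-$3$ graph—is really used.
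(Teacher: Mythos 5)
Your proof is correct. The lower bound is essentially the paper's own argument: the paper likewise notes that the $2$-armed tile must be $\{a,\hat{a}\}$ because degree-two vertices are adjacent (your first case), and then uses the balance equation $2(n-2)z_{1,2}=0$ together with the parity obstruction for a $3$-armed tile over a single bond-edge type (your second case), with Lemma~\ref{onebondlemma} licensing the reduction to $\#\Sigma(P)=1$ in both treatments. Where you genuinely diverge is the upper bound: the paper exhibits the pot $P = \left\{ \{a,\hat{a}\}, \{a^2,\hat{a}\}, \{a,\hat{a}^2\} \right\}$ (the generic square-lattice pot of Proposition~\ref{props1mostlattices} with $\{a^2,\hat{a}^2\}$ deleted, so the ladder fits the same family-wide scheme), but states without exhibiting a labeling that it realizes $G$; you instead use $P = \left\{ \{a,\hat{a}\}, \{a^3\}, \{\hat{a}^3\} \right\}$ and give a complete, uniform-in-$n$ verification via the bipartition of the interior $2\times(n-2)$ sub-ladder. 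Your design checks out, including the one delicate point you flag: the two interior vertices adjacent to a side rung's corners are themselves adjacent in the sub-ladder, hence in opposite color classes, which forces each corner to spend one $a$ and one $\hat{a}$ and makes the rung's half-edge labels complementary (this works even in the degenerate $n=3$ case, where the interior is a single edge). The trade-off: your homogeneous-tile pot makes the realization proof self-contained and essentially automatic once the bipartition is fixed, while the paper's choice keeps the ladder pot consistent with its pots for larger lattices at the cost of leaving the realization implicit. Both pots are equally valid witnesses in Scenario 1, where incidental smaller complexes (e.g., the loop formed by $\{a,\hat{a}\}$ alone) are permitted.
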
 
\begin{proof}
Recall the valency sequence of $G$ is $(2,3)$, so $2 \leq T_1(G) \leq 3$ \cite{ellis2014minimal}.  Let $G \in \mathcal{O}(P)$ with $\#P = 2$. Note that the  $2$-armed tile must be $\{a,\hat{a}\}$ since degree two vertices are adjacent to each other in $G$. There are $2(n-2)$ degree three vertices in $G$, so $2(n-2)z_{1,2}=0$, where $z_{1,2}$ is the net number of cohesive-ends on the 3-armed tile type. This forces $z_{1,2}=0$, which is impossible for a 3-armed tile with a single bond-edge type. Lemma \ref{onebondlemma} ensures that $T_1(G)$ can be achieved with a pot $P$ such that $\# \Sigma P = 1$. Thus, $T_1(G) = 3$, and $G$ is realized by the pot $P = \left\{ \{ a, \hat{a} \}, \{ a^2, \hat{a} \}, \{a, \hat{a}^2\} \right\}.$
\end{proof} 

In Scenario 2, the construction matrix can be used to identify the order of the smallest graph realized by a pot of tiles, as well as the proportion of tiles to use to construct the graph. However, it is possible for a construction matrix to have multiple solutions. These solutions may or may not correspond to isomorphic graphs. Furthermore, a single solution may realize multiple non-isomorphic graphs. The following examples demonstrate the limitations of the solutions obtained from the construction matrix.

\begin{example}\label{limits_of_matrix_ex}
It can be easily verified that the pot $P = \left\{ \{a,b\}, \{a,\hat{b}\}, \{\hat{a}^2,b\}, \right.$ $\left.\{\hat{a}^2,\hat{b}\}\right\}$ realizes the $2 \times 3$ square lattice graph. The construction matrix and spectrum of the pot are shown below.

\begin{equation}
    M(P)=\begin{amatrix}{4} 1 & 1 & -2 & -2 & 0 \\ 1 & -1 &1 &-1 & 0\\1 & 1 & 1 & 1 & 1  \end{amatrix}
\end{equation}

\begin{equation} \mathcal{S}(P) = \left\{ \frac{1}{6r} \langle r+t, 3r-t, 2r-t, t \rangle \middle| \; r \in \mathbb{Z}^+,  t \in \left(\mathbb{Z}\cap [-r,2r]\right) \right\} \end{equation}

 Note that there is a free variable in the solution set for $M(P)$. This creates difficulty in assessing by hand the smallest ordered graph in $\mathcal{O}(P)$. Using the program in Section \ref{sec:program}, it can be shown that the order of the smallest graph that can be realized from this pot of tiles is six with $r=1$ and $t \in \{ 0, 1, 2\}$. When $t = 0$, then one graph that can be realized is shown in Figure \ref{fig:r0graph}. When $t = 1$, two non-isomorphic graphs can be realized, including the $2 \times 3$ square lattice, as shown in Figures \ref{fig:2x3lattice} and \ref{fig:2x3latticenonisom}. Note that the graphs in Figures \ref{fig:r0graph} and \ref{fig:2x3latticenonisom} are the same but are constructed using different tile proportions.

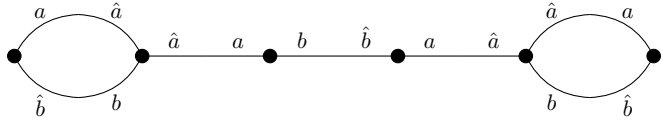
\begin{figure}  \centering
\begin{tikzpicture}[transform shape, scale=.85]
        \draw (0,0) [bend left] to node [above] {$a$} (1,0.65);
        \draw(1,0.65) [bend left] to node[above]{$\hat{a}$}(2,0);
        \draw (0,0) [bend right] to node [below] {$\hat{b}$} (1,-0.65);
        \draw(1,-0.65) [bend right] to node[below]{$b$}(2,0);
        \draw(2,0)--node[above]{$\hat{a}$}(3,0);
        \draw(3,0)--node[above]{$a$}(4,0);
        \draw(4,0)--node[above]{$b$}(5,0);
        \draw(5,0)--node[above]{$\hat{b}$}(6,0);
        \draw(6,0)--node[above]{$a$}(7,0);
        \draw(7,0)--node[above]{$\hat{a}$}(8,0);
        \draw (8,0) [bend left] to node [above] {$\hat{a}$} (9,0.65);
        \draw(9,0.65) [bend left] to node[above]{$a$}(10,0);
        \draw (8,0) [bend right] to node [below] {$b$} (9,-0.65);
        \draw(9,-0.65) [bend right] to node[below]{$\hat{b}$}(10,0);
        
        \filldraw (0,0) circle [radius=3pt];
        \filldraw (2,0) circle [radius=3pt];
        \filldraw (4,0) circle [radius=3pt];
        \filldraw (6,0) circle [radius=3pt];
        \filldraw (8,0) circle [radius=3pt];
        \filldraw (10,0) circle [radius=3pt];

        \end{tikzpicture} \caption{Graph formed from $P$ when $r=1$ and $t=0$ }
        \label{fig:r0graph}
        \end{figure}

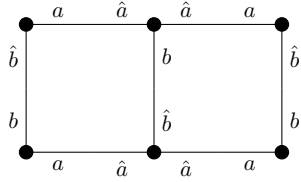
\begin{figure} \centering
\begin{tikzpicture}[transform shape, scale = .85]
        \draw(0,0)--node[below]{$a$}(1,0);
        \draw(1,0)--node[below]{$\hat{a}$}(2,0);
        \draw(2,0)--node[below]{$\hat{a}$}(3,0);
        \draw(3,0)--node[below]{$a$}(4,0);
        \draw(0,0)--node[left]{$b$}(0,1);
        \draw(0,1)--node[left]{$\hat{b}$}(0,2); 
        \draw(2,0)--node[right]{$\hat{b}$}(2,1);
        \draw(2,1)--node[right]{$b$}(2,2);
        \draw(4,0)--node[right]{$b$}(4,1);
        \draw(4,1)--node[right]{$\hat{b}$}(4,2);
        \draw(0,2)--node[above]{$a$}(1,2);
        \draw(1,2)--node[above]{$\hat{a}$}(2,2);
        \draw(2,2)--node[above]{$\hat{a}$}(3,2);
        \draw(3,2)--node[above]{$a$}(4,2);
        
        \filldraw (0,0) circle [radius=3pt];
        \filldraw (0,2) circle [radius=3pt];
        \filldraw (2,0) circle [radius=3pt];
        \filldraw (2,2) circle [radius=3pt];
        \filldraw (4,0) circle [radius=3pt];
        \filldraw (4,2) circle [radius=3pt];
        \end{tikzpicture}\caption{$2 \times 3$ square lattice graph formed from $P$ when $r=1$ and $t=1$} \label{fig:2x3lattice} \end{figure}
        
        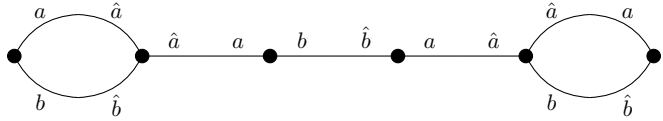
\begin{figure}[h!] \centering
        \begin{tikzpicture}[transform shape, scale =.85]
        \draw (0,0) [bend left] to node [above] {$a$} (1,0.65);
        \draw(1,0.65) [bend left] to node[above]{$\hat{a}$}(2,0);
        \draw (0,0) [bend right] to node [below] {$b$} (1,-0.65);
        \draw(1,-0.65) [bend right] to node[below]{$\hat{b}$}(2,0);
        \draw(2,0)--node[above]{$\hat{a}$}(3,0);
        \draw(3,0)--node[above]{$a$}(4,0);
        \draw(4,0)--node[above]{$b$}(5,0);
        \draw(5,0)--node[above]{$\hat{b}$}(6,0);
        \draw(6,0)--node[above]{$a$}(7,0);
        \draw(7,0)--node[above]{$\hat{a}$}(8,0);
        \draw (8,0) [bend left] to node [above] {$\hat{a}$} (9,0.65);
        \draw(9,0.65) [bend left] to node[above]{$a$}(10,0);
        \draw (8,0) [bend right] to node [below] {$b$} (9,-0.65);
        \draw(9,-0.65) [bend right] to node[below]{$\hat{b}$}(10,0);

        \filldraw (0,0) circle [radius=3pt];
        \filldraw (2,0) circle [radius=3pt];
        \filldraw (4,0) circle [radius=3pt];
        \filldraw (6,0) circle [radius=3pt];
        \filldraw (8,0) circle [radius=3pt];
        \filldraw (10,0) circle [radius=3pt];
        \end{tikzpicture}
        \caption{Non-isomorphic graph formed from $P$ when $r=1$ and $t=1$} \label{fig:2x3latticenonisom} \end{figure}

\end{example}

Example \ref{limits_of_matrix_ex} demonstrates that given one pot of tiles, two isomorphic graphs may be realized using a different ratio of tiles. The example also shows that the same ratio of tiles may realize non-isomorphic graphs. 
\begin{remark} For the $2 \times 3$ square lattice graph $G$, $T_2(G)=4, B_2(G)=2, T_3(G) = 4$, and $B_3(G) = 3$. Proofs are provided in \cite{repository}.  \end{remark}

\subsection{Triangle Lattice Graphs and No Pot Simultaneously Achieving $B_3$ and $T_3$ } \label{sec:trianglelattice}
\begin{definition} The $m \times n$ \emph{triangular lattice graph} is the graph Cartesian product $P_m \times P_n$ of path graphs on $m$ and $n$ vertices with diagonal edges from the bottom left to the top right of each square. That is, if the vertices of the square lattice are integer Cartesian coordinates, then the diagonal edges are in between coordinates $(i,j)$ and $(i+1, j+1)$. The number of 3-cycles is $2(m-1)(n-1)$ and $\#V(G) = mn$. 
\end{definition}

 An interesting question when finding $T_i(G)$ and $B_i(G)$ for $i= 2, 3$ is whether both minima can be achieved simultaneously. With the exception of the cube, both minima have been achieved for families of graphs presented in the literature. The following example shows that there is no pot that achieves both the minimum number of tiles and the minimum number of bond-edge types needed to realize the $2 \times 3$ triangular lattice graph in Scenario 3.
 

\begin{figure}[h] \centering
\begin{tikzpicture}[transform shape, scale = .85,
  arrow inside/.style = {
    postaction={decorate},
    decoration={markings, mark=at position 0.95 with {\arrow[scale=2]{stealth}}}
  }
  ]
        \draw(0,0)--node[below]{}(1,0);
        \draw(1,0)--node[below]{}(2,0);
        \draw(2,0)--node[below]{}(3,0);
        \draw(3,0)--node[below]{}(4,0);
        \draw(0,0)--node[below]{}(1,1);
        \draw(1,1)--node[below]{}(2,2);
         \draw(2,0)--node[below]{}(3,1);
        \draw(3,1)--node[below]{}(4,2);
        \draw(0,0)--node[left]{}(0,1);
        \draw(0,1)--node[left]{}(0,2); 

        \draw(2,0)--node[right]{}(2,1);
        \draw(2,1)--node[right]{}(2,2);
        \draw(4,0)--node[right]{}(4,1);
        \draw(4,1)--node[right]{}(4,2);
        \draw(0,2)--node[above]{}(1,2);
        \draw(1,2)--node[above]{}(2,2);
        \draw(2,2)--node[above]{}(3,2);
        \draw(3,2)--node[above]{}(4,2);
        
        \node [Node Label Style] at (0,0) {$1$};
        \node [Node Label Style] at (2,0) {$3$};
        \node [Node Label Style] at (4,0) {$5$};
        \node [Node Label Style] at (0,2) {$2$};
        \node [Node Label Style] at (2,2) {$4$};
        \node [Node Label Style] at (4,2) {$6$};

        \end{tikzpicture} \caption{$2 \times 3$ triangle lattice graph}\label{TriangleExample} \end{figure}
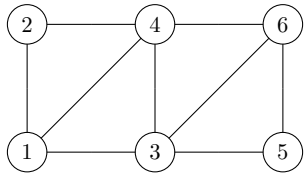

\begin{example}

Consider the $2\times 3$ triangular lattice graph, $G$, as shown in Figure \ref{TriangleExample}. By  \cite{Almodovar2019}, $T_3(G) = 4$ and $B_3(G) =3$.

Now we show that given a pot $P$ that uses only three bond-edge types, it is impossible to realize $G$ with four tile types. By way of contradiction, suppose that $\#P =4$ and $\#\Sigma(P)=3$. Since the degree four vertices $v_3$ and $v_4$ are adjacent, $
\lambda(v_3) \neq \lambda(v_4)$ by Lemma 3 in \cite{ellis2014minimal}. Hence $P=\{t_1, t_2, t_3, t_4\}$ where $\lambda(v_1)=\lambda(v_6)=t_1$, $\lambda(v_2)=\lambda(v_5)=t_2$, $\lambda(v_3)=t_3$ and $\lambda(v_4)=t_4$. Without loss of generality, there are only two choices for $t_2$, either $t_2=\{a,a\}$ or $t_2=\{a,b\}$. If $t_2 = \{a,a\}$ then $P$ can realize a graph in which a degree two vertex is adjacent to two degree three vertices, which is not isomorphic to $G$ as shown in Figure \ref{TriangleNonIsoExample}. Hence, $t_2=\{a,b\}$. Without loss of generality, assume $\lambda(v_2, \{v_2,v_4\}) = a$. As in the argument when $t_2 = \{a,a\}$, we have that $\lambda(v_5,\{v_5,v_6\}) \neq a$, otherwise a non-isomorphic graph may be realized as shown in Figure \ref{TriangleNonIsoExample}. For the remainder of this proof we assume, as above, $\lambda(v_2, \{v_2,v_4\}) = a$, $\lambda(v_5,\{v_5,v_3\}) = a$, $\lambda(v_2,\{v_2,v_1\}) = b$, and $\lambda(v_5,\{v_5,v_6\}) = b$.

Consider the two remaining arms of $t_1$. If either of these arms is labeled with $\hat{a}$ or $b$, a graph with a loop can be realized. If either of the arms are labeled with $a$ or $\hat{b}$ a graph with  multiple edges can be realized. Thus, we must label both of the arms with a new bond-edge type, $c$. Notice that with this labeling  multiple edges can form between vertices $v_1$ and $v_4$ or between $v_4$ and $v_6$, thus creating a non-isomorphic graph. This shows there exists $H \in \mathcal{O}(P)$ such that $H$ is not isomorphic to $G$ when $\# P =4$ and $\# \Sigma(P) =3$. \par
Scenario 1 solutions for selected dimensions of triangle lattice graphs are provided in \cite{repository, Almodovar2019}. For all triangle lattice graphs of the type described here, $4 \leq T_1(G) \leq 5$ \cite{repository}.

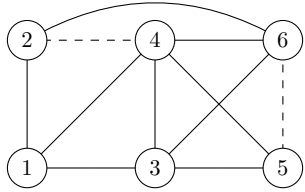
\begin{figure}[h] \centering
\begin{tikzpicture}[transform shape, scale = .85,
  arrow inside/.style = {
    postaction={decorate},
    decoration={markings, mark=at position 0.95 with {\arrow[scale=2]{stealth}}}
  }
  ]
        \draw(0,0)--node[below]{}(1,0);
        \draw(1,0)--node[below]{}(2,0);
        \draw(2,0)--node[below]{}(3,0);
        \draw(3,0)--node[below]{}(4,0);
        \draw(0,0)--node[below]{}(1,1);
        \draw(1,1)--node[below]{}(2,2);
         \draw(2,0)--node[below]{}(3,1);
        \draw(3,1)--node[below]{}(4,2);
        \draw(0,0)--node[left]{}(0,1);
        \draw(0,1)--node[left]{}(0,2); 

        \draw(2,0)--node[right]{}(2,1);
        \draw(2,1)--node[right]{}(2,2);
        \draw[dashed](4,0)--node[right]{}(4,1);
        \draw[dashed](4,1)--node[right]{}(4,2);
        \draw[dashed](0,2)--node[above]{}(1,2);
        \draw[dashed](1,2)--node[above]{}(2,2);
        \draw(2,2)--node[above]{}(3,2);
        \draw(3,2)--node[above]{}(4,2);
        
        \path[draw](0,2) edge [bend left] node []{} (4,2);
         \path[draw](2,2) edge [] node []{} (4,0);
        
        \node [Node Label Style] at (0,0) {$1$};
        \node [Node Label Style] at (2,0) {$3$};
        \node [Node Label Style] at (4,0) {$5$};
        \node [Node Label Style] at (0,2) {$2$};
        \node [Node Label Style] at (2,2) {$4$};
        \node [Node Label Style] at (4,2) {$6$};

        \end{tikzpicture} \caption{$2 \times 3$ triangle lattice graph}\label{TriangleNonIsoExample} \end{figure}

 \end{example}

\subsection{ Triangle and Square Lattice Tube Graphs in Scenario 1}\label{sec:tubes}
Fortunately, there are classes and settings for which provably optimal strategies are readily attainable.  Lattice tube graphs represent an abstraction of the construction of DNA tubes formed from meshes. In this section we give systematic design strategies for triangle and square lattice tube graphs in Scenario~1. 

\begin{definition} A \emph{lattice tube} is a lattice graph in which one pair of opposite edges at the ends of each row of the lattice have been identified, allowing the lattice to wrap and form a tube structure in 3-space. \end{definition} 
 
 Since triangle lattice tubes in the case that $n \leq 3$ are degenerate, we will assume that $n \geq 4$. The $2 \times n$ case is 4-regular, so from \cite{ellis2014minimal} it follows that $T_1 =1$. A labeling is shown in Figure \ref{fig:2xntrianglelattices1}. Thus, we will also assume $m \geq3$.  


\begin{figure}[h] \centering
\begin{tikzpicture}[transform shape, scale = .85,
  arrow inside/.style = {
    postaction={decorate},
    decoration={markings, mark=at position 0.95 with {\arrow[scale=2]{stealth}}}
  }
  ]
        \draw(0,0)--node[below]{$a$}(1,0);
        \draw(1,0)--node[below]{$\hat{a}$}(2,0);
        \draw(2,0)--node[below]{$a$}(3,0);
        \draw(3,0)--node[below]{$\hat{a}$}(4,0);
        \draw(4,0)--node[below]{$a$}(5,0);
        \draw(5,0)--node[below]{$\hat{a}$}(6,0);
        \draw(0,0)--node[below]{$a$}(1,1);
        \draw(1,1)--node[below]{$\hat{a}$}(2,2);
         \draw(2,0)--node[below]{$a$}(3,1);
        \draw(3,1)--node[below]{$\hat{a}$}(4,2);
         \draw(4,0)--node[below]{$a$}(5,1);
        \draw(5,1)--node[below]{$\hat{a}$}(6,2);
        \draw[arrow inside](0,0)--node[left]{$\hat{a}$}(0,1);
        \draw(0,1)--node[left]{$a$}(0,2); 
        \draw[arrow inside](6,0)--node[right]{$\hat{a}$}(6,1);
        \draw(6,1)--node[right]{$a$}(6,2);
        \draw(2,0)--node[right]{$\hat{a}$}(2,1);
        \draw(2,1)--node[right]{$a$}(2,2);
        \draw(4,0)--node[right]{$\hat{a}$}(4,1);
        \draw(4,1)--node[right]{$a$}(4,2);
        \draw(0,2)--node[above]{$a$}(1,2);
        \draw(1,2)--node[above]{$\hat{a}$}(2,2);
        \draw(2,2)--node[above]{$a$}(3,2);
        \draw(3,2)--node[above]{$\hat{a}$}(4,2);
        \draw(4,2)--node[above]{$a$}(5,2);
        \draw(5,2)--node[above]{$\hat{a}$}(6,2);
        
        \filldraw (0,0) circle [radius=3pt];
        \filldraw (0,2) circle [radius=3pt];
        \filldraw (2,0) circle [radius=3pt];
        \filldraw (2,2) circle [radius=3pt];
        \filldraw (4,0) circle [radius=3pt];
        \filldraw (4,2) circle [radius=3pt];
        \filldraw (6,0) circle [radius=3pt];
        \filldraw (6,2) circle [radius=3pt];
        \end{tikzpicture} \caption{Scenario 1 assembly design of $2 \times 4$ triangle lattice tube graph}\label{fig:2xntrianglelattices1} \end{figure}
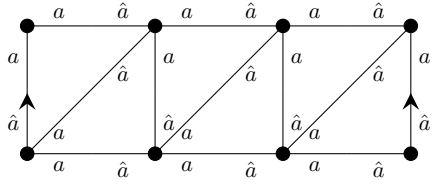

\begin{proposition}
Let $G$ be a $m \times n$ triangle lattice tube graph  with $n \geq4$,  and $m \geq 3$. Then $T_1(G)=2$. 
\end{proposition}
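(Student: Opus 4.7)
The plan is to prove the two inequalities $T_1(G)\geq 2$ and $T_1(G)\leq 2$ separately. For the lower bound, I would first read off the vertex degrees of the tube. Identifying one pair of opposite column-boundaries closes the left and right sides of the lattice but leaves the top and bottom rows intact, so every vertex in rows $1$ and $m$ has degree four (two horizontal neighbors together with one vertical and one diagonal neighbor toward the interior), while each of the $(m-2)n$ vertices in the $m-2$ interior rows has degree six. Since a tile has a fixed number of arms equal to the degree of any vertex it is assigned to, two distinct vertex degrees force at least two tile types, giving $T_1(G)\geq av(G)=2$ by the bound of \cite{ellis2014minimal}.

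For the upper bound, by Lemma \ref{onebondlemma} it suffices to construct a pot with a single bond-edge type. I would take $P=\{\{a^2,\hat a^2\},\{a^3,\hat a^3\}\}$ and build an explicit assembly design by globally orienting the edges: each horizontal edge directed rightward around its row-cycle, each vertical edge directed from the lower row up to the next row, and each diagonal edge directed from its upper-right endpoint down to its lower-left endpoint (the reverse of the vertical convention). Using the convention that an outgoing half-edge carries the un-hatted label and an incoming half-edge carries the hatted label, a local count then shows that every bottom-row vertex receives the multiset $\{a,a,\hat a,\hat a\}$ (right-horizontal and vertical-up contribute $a$, while left-horizontal and the incoming diagonal contribute $\hat a$), every top-row vertex receives $\{a,a,\hat a,\hat a\}$ by a symmetric count, and every interior-row vertex receives $\{a^3,\hat a^3\}$ (right-horizontal, vertical-up, and the down-left diagonal contribute $a$; the three opposite edges contribute $\hat a$).

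The main technical worry is the wrap-around identification of columns, since the column index $j+1$ is read modulo $n$. Because each row of horizontal edges forms a single cycle oriented uniformly rightward, no parity condition on $n$ is needed, and because the diagonal orientation rule is defined purely locally as ``upper-right to lower-left,'' the boundary case $j=n$ requires no special handling. Combining the two bounds yields $T_1(G)=2$.
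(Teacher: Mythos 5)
Your proposal is correct and follows essentially the same route as the paper: the lower bound $T_1(G)\geq av(G)=2$ from the degree-four/degree-six valency sequence, and the upper bound via the very same pot $P=\{\{a^2,\hat a^2\},\{a^3,\hat a^3\}\}$ realized by orienting each parallel class of edges uniformly (the paper cites the bound $T_1(G)\leq ev(G)+2ov(G)$ and displays the labeling in a figure, while you verify the local half-edge counts and the wrap-around explicitly, which is just a more detailed write-up of the same construction). Your observation that the diagonal class must be oriented oppositely to the vertical class so that boundary vertices receive $\{a^2,\hat a^2\}$ rather than $\{a^3,\hat a\}$ is a correct and worthwhile point that the paper leaves implicit in its figure.
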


\begin{proof}
 By Theorem 1 in \cite{ellis2014minimal}, $av(G)\leq T_1(G) \leq ev(G) + 2ov(G)$. By inspection all vertices have degree four or six. Hence, $2\leq T_1(G) \leq 2$. The pot $P=\left\{\{a^2, \hat{a}^2\}, \{a^3, \hat{a}^3\}\right\}$ will construct $G$ by labeling parallel edges the same orientation, as in Figure~\ref{fig:largetriangletube}. 
\end{proof}

\begin{figure}[h] \centering
\begin{tikzpicture}[transform shape, scale = .85,
  arrow inside/.style = {
    postaction={decorate},
    decoration={markings, mark=at position 0.95 with {\arrow[scale=2]{stealth}}}
  }
  ]
        \draw(0,0)--node[below]{$a$}(1,0);
        \draw(1,0)--node[below]{$\hat{a}$}(2,0);
        \draw(2,0)--node[below]{$a$}(3,0);
        \draw(3,0)--node[below]{$\hat{a}$}(4,0);
        \draw(4,0)--node[below]{$a$}(5,0);
        \draw(5,0)--node[below]{$\hat{a}$}(6,0);
        \draw(6,0)--node[below]{$a$}(7,0);
        \draw(7,0)--node[below]{$\hat{a}$}(8,0);
        \draw(0,0)--node[below]{$a$}(1,1);
        \draw(1,1)--node[below]{$\hat{a}$}(2,2);
         \draw(2,0)--node[below]{$a$}(3,1);
        \draw(3,1)--node[below]{$\hat{a}$}(4,2);
         \draw(4,0)--node[below]{$a$}(5,1);
        \draw(5,1)--node[below]{$\hat{a}$}(6,2);
         \draw(6,0)--node[below]{$a$}(7,1);
        \draw(7,1)--node[below]{$\hat{a}$}(8,2);
        \draw(0,2)--node[below]{$a$}(1,3);
        \draw(1,3)--node[below]{$\hat{a}$}(2,4);
         \draw(2,2)--node[below]{$a$}(3,3);
        \draw(3,3)--node[below]{$\hat{a}$}(4,4);
         \draw(4,2)--node[below]{$a$}(5,3);
        \draw(5,3)--node[below]{$\hat{a}$}(6,4);
         \draw(6,2)--node[below]{$a$}(7,3);
        \draw(7,3)--node[below]{$\hat{a}$}(8,4);
        \draw(0,4)--node[below]{$a$}(1,5);
        \draw(1,5)--node[below]{$\hat{a}$}(2,6);
         \draw(2,4)--node[below]{$a$}(3,5);
        \draw(3,5)--node[below]{$\hat{a}$}(4,6);
         \draw(4,4)--node[below]{$a$}(5,5);
        \draw(5,5)--node[below]{$\hat{a}$}(6,6);
         \draw(6,4)--node[below]{$a$}(7,5);
        \draw(7,5)--node[below]{$\hat{a}$}(8,6);
        \draw(0,6)--node[below]{$a$}(1,7);
        \draw(1,7)--node[below]{$\hat{a}$}(2,8);
         \draw(2,6)--node[below]{$a$}(3,7);
        \draw(3,7)--node[below]{$\hat{a}$}(4,8);
         \draw(4,6)--node[below]{$a$}(5,7);
        \draw(5,7)--node[below]{$\hat{a}$}(6,8);
         \draw(6,6)--node[below]{$a$}(7,7);
        \draw(7,7)--node[below]{$\hat{a}$}(8,8);
        \draw[arrow inside](0,0)--node[left]{$\hat{a}$}(0,1);
        \draw(0,1)--node[left]{$a$}(0,2); 
        \draw[arrow inside](0,2)--node[left]{$\hat{a}$}(0,3);
        \draw(0,3)--node[left]{$a$}(0,4);
        \draw[arrow inside](0,4)--node[left]{$\hat{a}$}(0,5);
        \draw(0,5)--node[left]{$a$}(0,6);
        \draw[arrow inside](0,6)--node[left]{$\hat{a}$}(0,7);
        \draw(0,7)--node[left]{$a$}(0,8);
        \draw(0,8)--node[above]{$a$}(1,8);
        \draw(1,8)--node[above]{$\hat{a}$}(2,8);
        \draw(2,8)--node[above]{$a$}(3,8);
        \draw(3,8)--node[above]{$\hat{a}$}(4,8);
        \draw(4,8)--node[above]{$a$}(5,8);
        \draw(5,8)--node[above]{$\hat{a}$}(6,8);
        \draw(6,8)--node[above]{$a$}(7,8);
        \draw(7,8)--node[above]{$\hat{a}$}(8,8);
        \draw[arrow inside](8,0)--node[right]{$\hat{a}$}(8,1);
        \draw(8,1)--node[right]{$a$}(8,2);
        \draw[arrow inside](8,2)--node[right]{$\hat{a}$}(8,3);
        \draw(8,3)--node[right]{$a$}(8,4);
        \draw[arrow inside](8,4)--node[right]{$\hat{a}$}(8,5);
        \draw(8,5)--node[right]{$a$}(8,6);
        \draw[arrow inside](8,6)--node[right]{$\hat{a}$}(8,7);
        \draw(8,7)--node[right]{$a$}(8,8);
        \draw(2,0)--node[right]{$\hat{a}$}(2,1);
        \draw(2,1)--node[right]{$a$}(2,2);
        \draw(4,0)--node[right]{$\hat{a}$}(4,1);
        \draw(4,1)--node[right]{$a$}(4,2);
        \draw(0,2)--node[below]{$a$}(1,2);
        \draw(1,2)--node[below]{$\hat{a}$}(2,2);
        \draw(2,2)--node[below]{$a$}(3,2);
        \draw(3,2)--node[below]{$\hat{a}$}(4,2);
        \draw(4,2)--node[below]{$a$}(5,2);
        \draw(5,2)--node[below]{$\hat{a}$}(6,2);
        \draw(6,0)--node[right]{$\hat{a}$}(6,1);
        \draw(6,1)--node[right]{$a$}(6,2);
        \draw(6,2)--node[below]{$a$}(7,2);
        \draw(7,2)--node[below]{$\hat{a}$}(8,2);
        \draw(2,2)--node[right]{$\hat{a}$}(2,3);
        \draw(2,3)--node[right]{$a$}(2,4);
        \draw(4,2)--node[right]{$\hat{a}$}(4,3);
        \draw(4,3)--node[right]{$a$}(4,4);
        \draw(0,4)--node[below]{$a$}(1,4);
        \draw(1,4)--node[below]{$\hat{a}$}(2,4);
        \draw(2,4)--node[below]{$a$}(3,4);
        \draw(3,4)--node[below]{$\hat{a}$}(4,4);
        \draw(4,4)--node[below]{$a$}(5,4);
        \draw(5,4)--node[below]{$\hat{a}$}(6,4);
        \draw(6,2)--node[right]{$\hat{a}$}(6,3);
        \draw(6,3)--node[right]{$a$}(6,4);
        \draw(6,4)--node[below]{$a$}(7,4);
        \draw(7,4)--node[below]{$\hat{a}$}(8,4);
        \draw(2,4)--node[right]{$\hat{a}$}(2,5);
        \draw(2,5)--node[right]{$a$}(2,6);
        \draw(4,4)--node[right]{$\hat{a}$}(4,5);
        \draw(4,5)--node[right]{$a$}(4,6);
        \draw(6,4)--node[right]{$\hat{a}$}(6,5);
        \draw(6,5)--node[right]{$a$}(6,6);
        \draw(6,6)--node[below]{$a$}(7,6);
        \draw(7,6)--node[below]{$\hat{a}$}(8,6);
        \draw(0,6)--node[below]{$a$}(1,6);
        \draw(1,6)--node[below]{$\hat{a}$}(2,6);
        \draw(2,6)--node[below]{$a$}(3,6);
        \draw(3,6)--node[below]{$\hat{a}$}(4,6);
        \draw(4,6)--node[below]{$a$}(5,6);
        \draw(5,6)--node[below]{$\hat{a}$}(6,6);
        \draw(2,6)--node[right]{$\hat{a}$}(2,7);
        \draw(2,7)--node[right]{$a$}(2,8);
        \draw(4,6)--node[right]{$\hat{a}$}(4,7);
        \draw(4,7)--node[right]{$a$}(4,8);
        \draw(6,6)--node[right]{$\hat{a}$}(6,7);
        \draw(6,7)--node[right]{$a$}(6,8);
        
        \filldraw (0,0) circle [radius=3pt];
        \filldraw (0,2) circle [radius=3pt];
        \filldraw (0,4) circle [radius=3pt];
        \filldraw (0,6) circle [radius=3pt];
        \filldraw (0,8) circle [radius=3pt];
        \filldraw (2,0) circle [radius=3pt];
        \filldraw (2,2) circle [radius=3pt];
        \filldraw (2,4) circle [radius=3pt];
        \filldraw (2,6) circle [radius=3pt];
        \filldraw (2,8) circle [radius=3pt];
        \filldraw (4,0) circle [radius=3pt];
        \filldraw (4,2) circle [radius=3pt];
        \filldraw (4,4) circle [radius=3pt];
        \filldraw (4,6) circle [radius=3pt];
        \filldraw (4,8) circle [radius=3pt];
        \filldraw (6,0) circle [radius=3pt];
        \filldraw (6,2) circle [radius=3pt];
        \filldraw (6,4) circle [radius=3pt];
        \filldraw (6,6) circle [radius=3pt];
        \filldraw (6,8) circle [radius=3pt];
        \filldraw (8,0) circle [radius=3pt];
        \filldraw (8,2) circle [radius=3pt];
        \filldraw (8,4) circle [radius=3pt];
        \filldraw (8,6) circle [radius=3pt];
        \filldraw (8,8) circle [radius=3pt];
        \end{tikzpicture} \caption{Scenario 1 assembly design of $5 \times 5$ triangle lattice tube graph}\label{fig:largetriangletube} \end{figure}
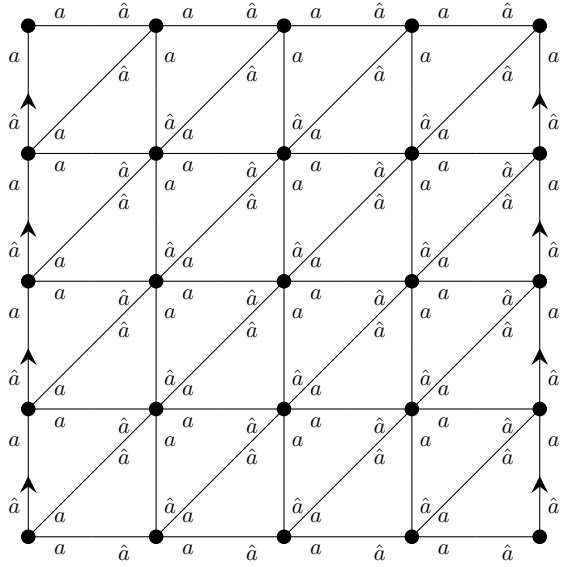 

Again, since tubes in the case that $n \leq 3$ are degenerate, we will assume that $n \geq 4$ in the following proposition.

\begin{proposition} Let $G$ be an $m \times n$ square lattice tube graph. Then $T_1(G)=3$. \end{proposition}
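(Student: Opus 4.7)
The plan is to establish the bound in two directions. The upper bound $T_1(G)\le 3$ is straightforward: since the valency sequence of $G$ is $(3,4)$, the general inequality $T_1(G)\le ov(G)+2ev(G)=1+2=3$ from \cite{ellis2014minimal} gives $T_1(G)\le 3$, and I would exhibit this bound concretely via the pot $P=\{\{a,\hat{a}^2\},\{a^2,\hat{a}^2\},\{a^2,\hat{a}\}\}$. The assembly design places $\{a,\hat{a}^2\}$ on every top-row vertex with the downward arm labeled $\hat{a}$ and cycle arms bearing one $a$ and one $\hat{a}$; places $\{a^2,\hat{a}^2\}$ on every interior vertex with up-arm $a$, down-arm $\hat{a}$, and cycle arms $\{a,\hat{a}\}$; and places $\{a^2,\hat{a}\}$ on every bottom-row vertex with up-arm $a$ and cycle arms $\{a,\hat{a}\}$. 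Each of the $m$ row cycles of length $n$ closes as a consistently oriented directed cycle for any $n\ge 4$, and the vertical edges bond by construction.

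For the lower bound $T_1(G)\ge 3$, suppose toward contradiction that $\#P=2$ suffices. By Lemma \ref{onebondlemma} we may take $\#\Sigma(P)=1$, so $P$ uses a single bond-edge type $a$. Since $G$ has both degree-3 and degree-4 vertices, $P$ must consist of one 3-armed tile $t_1$ (used at all $2n$ boundary-row vertices) and one 4-armed tile $t_2$ (used at all $(m-2)n$ interior-row vertices). Letting $z_{1,1}$ and $z_{1,2}$ denote the net $a$-counts of $t_1$ and $t_2$, the balance equation in the construction matrix reduces to
\begin{equation*}
2z_{1,1}+(m-2)z_{1,2}=0.
\end{equation*}

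The key step is to combine this equation with the structural requirement that each row closes as a cycle of identical tiles and that stacked interior rows must bond vertically. Cycle-closure on the top and bottom rows forces $t_1$ to contain both $a$ and $\hat{a}$, so $z_{1,1}\in\{-1,1\}$ is odd. When more than one interior row is present, the two up-arms and down-arms of identical $t_2$ tiles in adjacent rows must pair complementarily, and the row-cycle-closure constraint forces the remaining two cycle arms of $t_2$ also to be complementary; together these force $t_2=\{a^2,\hat{a}^2\}$ and hence $z_{1,2}=0$. Substituting into the balance equation yields $z_{1,1}=0$, contradicting the parity of $z_{1,1}$.

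The main obstacle in this plan is the sub-case where the vertical-pairing argument between identical interior-row tiles does not immediately apply (for example, when the interior consists of a single row). Here one needs a separate case analysis combining the parity of $(m-2)$ with the permitted values of $z_{1,2}\in\{-2,0,2\}$ for 4-armed tiles that bond both on the cycle and to the boundary rows above and below, ruling out each combination by exhibiting either an incompatibility with the tile-composition of $t_2$ or a failure of the balance equation to have integer solutions for the allowed values of $z_{1,1}$.
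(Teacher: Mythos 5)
Your upper bound is fine and uses the same pot as the paper. The genuine gap is exactly the sub-case you flag and defer at the end: $m=3$, a single interior row. Note first that your balance equation $2z_{1,1}+(m-2)z_{1,2}=0$, together with $z_{1,1}\in\{\pm 1\}$ and $z_{1,2}\in\{0,\pm 2\}$ (both forced by adjacency of like-degree vertices around the row cycles), already eliminates every $m\geq 4$ outright, since $z_{1,2}=0$ forces $z_{1,1}=0$ and $z_{1,2}=\pm 2$ forces $|m-2|=1$; so your ``vertical-pairing'' detour is unnecessary there, and as stated it is also unsound, because identical interior tiles need not allocate the same arms to up-, down-, and cycle-edges at every vertex, so nothing forces $t_2=\{a^2,\hat{a}^2\}$. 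The real problem is that at $m=3$ the surviving solution $z_{1,1}=\pm 1$, $z_{1,2}=\mp 2$ cannot be ``ruled out by case analysis'' as you propose: the pot $P=\left\{\{a^2,\hat{a}\},\{a,\hat{a}^3\}\right\}$ actually realizes the $3\times n$ tube. Orient all three row cycles consistently, so every vertex carries one $a$ and one $\hat{a}$ on its cycle arms, and label every vertical edge with $a$ on its degree-three end and $\hat{a}$ on its degree-four end; then each boundary vertex receives tile $\{a^2,\hat{a}\}$ and each middle-row vertex receives $\{a,\hat{a}^3\}$, a valid Scenario 1 assembly design with two tile types. So your plan collapses precisely at its acknowledged obstacle.

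For comparison, the paper's proof runs the same Diophantine argument but with vertex counts $2(n-1)$ (degree three) and $(m-2)(n-2)$ (degree four), yielding $|2(n-1)|=|2(m-2)(n-2)|$, whose only integer solutions have $n<4$. However, in a tube the boundary and interior rows are cycles of the same length, so the two counts should carry the same factor --- $2(n-1)$ and $(m-2)(n-1)$ under the paper's convention --- which cancels to give exactly your reduced equation $2z_{1,1}+(m-2)z_{1,2}=0$. Your bookkeeping is the consistent one, and it is the consistent bookkeeping that exposes the $m=3$ case that the paper's mismatched counts spuriously exclude. The conclusion $T_1(G)=3$ is therefore safe only for $m\geq 4$ (note also that the $2\times n$ tube is $3$-regular, with $T_1=2$ by \cite{ellis2014minimal}); the correct repair of your write-up is to restrict the range of $m$, not to attempt to close the $m=3$ case.
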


\begin{proof} 
Note that the valency sequence of $G$ is $(3,4)$, so $2 \leq T_1(G) \leq 3$ \cite{ellis2014minimal}. Assume $P$ is a pot such that $\#P = 2$ and $G \in \mathcal{O}(P)$. Let $t_1$ denote the 3-armed tile and $t_2$ denote the 4-armed tile. The following equation must be satisfied:
$$2(n-1)z_{1,1}+(m-2)(n-2)z_{1,2}=0$$
Notice that in $G$ degree three vertices are adjacent to one another and degree four vertices are adjacent to one another, so $z_{1,1} \in \{\pm 1\}$ and $z_{1,2} \in \{0,\pm 2\}$. Note that $z_{1,2} \neq 0$, as this would imply $2(n-1)z_{1,1} = 0$, which is impossible since $n \neq 1$. Thus, $m$ and $n$ must satisfy the following equation:
\begin{equation}\label{eq:sqtubes} | 2(n-1) | = | 2(m-2)(n-2)|\end{equation}
The only integer solution pairs $(m,n)$ to Equation \ref{eq:sqtubes} are $(2,1)$ and $(4,3)$. In both cases $n<4$, so there exists no pot $P$ such that $\#P = 2$ and $G \in \mathcal{O}(P)$. The pot $\left\{\{a^2,\hat{a}\},\{a,\hat{a}^2\},\{a^2,\hat{a}^2\}\right\}$ realizes any square lattice tube $G$. An example is shown in Figure \ref{fig:squaretube}.
\end{proof}

\begin{figure}  \centering
\begin{tikzpicture}[transform shape, scale = .85,
  arrow inside/.style = {
    postaction={decorate},
    decoration={markings, mark=at position 0.95 with {\arrow[scale=2]{stealth}}}
  }
  ]
        \draw(0,0)--node[below]{$a$}(1,0);
        \draw(1,0)--node[below]{$\hat{a}$}(2,0);
        \draw(2,0)--node[below]{$\hat{a}$}(3,0);
        \draw(3,0)--node[below]{$a$}(4,0);
        \draw(4,0)--node[below]{$a$}(5,0);
        \draw(5,0)--node[below]{$\hat{a}$}(6,0);
        \draw(6,0)--node[below]{$\hat{a}$}(7,0);
        \draw(7,0)--node[below]{$a$}(8,0);
        \draw[arrow inside] (0,0)--node[left]{$\hat{a}$}(0,1);
        \draw (0,1)--node[left]{$a$}(0,2); 
        \draw[arrow inside](0,2)--node[left]{$a$}(0,3);
        \draw(0,3)--node[left]{$\hat{a}$}(0,4);
        \draw[arrow inside](0,4)--node[left]{$\hat{a}$}(0,5);
        \draw(0,5)--node[left]{$a$}(0,6);
        \draw[arrow inside](0,6)--node[left]{$a$}(0,7);
        \draw(0,7)--node[left]{$\hat{a}$}(0,8);
        \draw(0,8)--node[above]{$a$}(1,8);
        \draw(1,8)--node[above]{$\hat{a}$}(2,8);
        \draw(2,8)--node[above]{$\hat{a}$}(3,8);
        \draw(3,8)--node[above]{$a$}(4,8);
        \draw(4,8)--node[above]{$a$}(5,8);
        \draw(5,8)--node[above]{$\hat{a}$}(6,8);
        \draw(6,8)--node[above]{$\hat{a}$}(7,8);
        \draw(7,8)--node[above]{$a$}(8,8);
        \draw[arrow inside](8,0)--node[right]{$\hat{a}$}(8,1);
        \draw(8,1)--node[right]{$a$}(8,2);
        \draw[arrow inside](8,2)--node[right]{$a$}(8,3);
        \draw(8,3)--node[right]{$\hat{a}$}(8,4);
        \draw[arrow inside](8,4)--node[right]{$\hat{a}$}(8,5);
        \draw(8,5)--node[right]{$a$}(8,6);
        \draw[arrow inside](8,6)--node[right]{$a$}(8,7);
        \draw(8,7)--node[right]{$\hat{a}$}(8,8);
        \draw(2,0)--node[left]{$a$}(2,1);
        \draw(2,1)--node[left]{$\hat{a}$}(2,2);
        \draw(4,0)--node[left]{$\hat{a}$}(4,1);
        \draw(4,1)--node[left]{$a$}(4,2);
        \draw(0,2)--node[below]{$\hat{a}$}(1,2);
        \draw(1,2)--node[above]{$a$}(2,2);
        \draw(2,2)--node[below]{$a$}(3,2);
        \draw(3,2)--node[above]{$\hat{a}$}(4,2);
        \draw(4,2)--node[below]{$\hat{a}$}(5,2);
        \draw(5,2)--node[above]{$a$}(6,2);
        \draw(6,0)--node[left]{$a$}(6,1);
        \draw(6,1)--node[left]{$\hat{a}$}(6,2);
        \draw(6,2)--node[below]{$a$}(7,2);
        \draw(7,2)--node[below]{$\hat{a}$}(8,2);
        \draw(2,2)--node[right]{$\hat{a}$}(2,3);
        \draw(2,3)--node[left]{$a$}(2,4);
        \draw(4,2)--node[right]{$a$}(4,3);
        \draw(4,3)--node[left]{$\hat{a}$}(4,4);
        \draw(0,4)--node[below]{$a$}(1,4);
        \draw(1,4)--node[above]{$\hat{a}$}(2,4);
        \draw(2,4)--node[below]{$\hat{a}$}(3,4);
        \draw(3,4)--node[above]{$a$}(4,4);
        \draw(4,4)--node[below]{$a$}(5,4);
        \draw(5,4)--node[above]{$\hat{a}$}(6,4);
        \draw(6,2)--node[right]{$\hat{a}$}(6,3);
        \draw(6,3)--node[left]{$a$}(6,4);
        \draw(6,4)--node[below]{$\hat{a}$}(7,4);
        \draw(7,4)--node[below]{$a$}(8,4);
        \draw(2,4)--node[right]{$a$}(2,5);
        \draw(2,5)--node[left]{$\hat{a}$}(2,6);
        \draw(4,4)--node[right]{$\hat{a}$}(4,5);
        \draw(4,5)--node[left]{$a$}(4,6);
        \draw(6,4)--node[right]{$a$}(6,5);
        \draw(6,5)--node[left]{$\hat{a}$}(6,6);
        \draw(6,6)--node[below]{$a$}(7,6);
        \draw(7,6)--node[below]{$\hat{a}$}(8,6);
        \draw(0,6)--node[above]{$\hat{a}$}(1,6);
        \draw(1,6)--node[above]{$a$}(2,6);
        \draw(2,6)--node[below]{$a$}(3,6);
        \draw(3,6)--node[above]{$\hat{a}$}(4,6);
        \draw(4,6)--node[below]{$\hat{a}$}(5,6);
        \draw(5,6)--node[above]{$a$}(6,6);
        \draw(2,6)--node[right]{$\hat{a}$}(2,7);
        \draw(2,7)--node[right]{$a$}(2,8);
        \draw(4,6)--node[right]{$a$}(4,7);
        \draw(4,7)--node[right]{$\hat{a}$}(4,8);
        \draw(6,6)--node[right]{$\hat{a}$}(6,7);
        \draw(6,7)--node[right]{$a$}(6,8);
        
        \filldraw (0,0) circle [radius=3pt];
        \filldraw (0,2) circle [radius=3pt];
        \filldraw (0,4) circle [radius=3pt];
        \filldraw (0,6) circle [radius=3pt];
        \filldraw (0,8) circle [radius=3pt];
        \filldraw (2,0) circle [radius=3pt];
        \filldraw (2,2) circle [radius=3pt];
        \filldraw (2,4) circle [radius=3pt];
        \filldraw (2,6) circle [radius=3pt];
        \filldraw (2,8) circle [radius=3pt];
        \filldraw (4,0) circle [radius=3pt];
        \filldraw (4,2) circle [radius=3pt];
        \filldraw (4,4) circle [radius=3pt];
        \filldraw (4,6) circle [radius=3pt];
        \filldraw (4,8) circle [radius=3pt];
        \filldraw (6,0) circle [radius=3pt];
        \filldraw (6,2) circle [radius=3pt];
        \filldraw (6,4) circle [radius=3pt];
        \filldraw (6,6) circle [radius=3pt];
        \filldraw (6,8) circle [radius=3pt];
        \filldraw (8,0) circle [radius=3pt];
        \filldraw (8,2) circle [radius=3pt];
        \filldraw (8,4) circle [radius=3pt];
        \filldraw (8,6) circle [radius=3pt];
        \filldraw (8,8) circle [radius=3pt];
        \end{tikzpicture} \caption{Scenario 1 assembly design of $5 \times 5$ square lattice tube graph }\label{fig:squaretube} \end{figure}
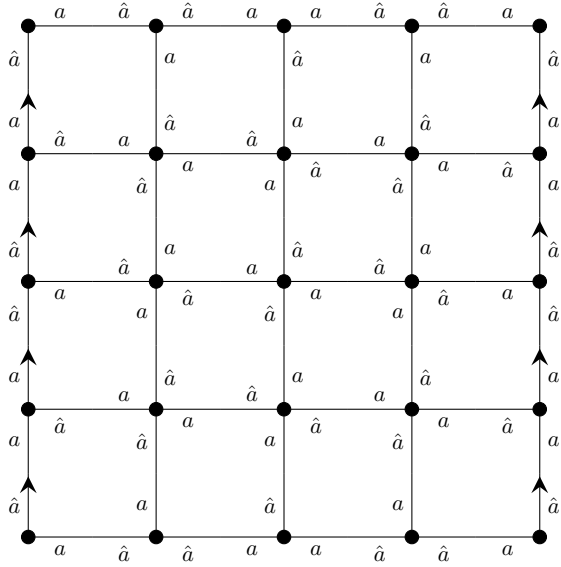 

\section{Conclusion}\label{conclusion}

We have sought here to build theoretical foundations for the field of DNA self-assembly.  Since abstractly formulated, these foundations are not limited to DNA self-assembly, but can inform any self-assembly process, at any scale, based on  building blocks with controlled cohesion sites.  We have shown that the general problem of determining the output of a pot, and of determining if a pot that realizes a given target will also realize smaller unwanted structures, are intractable.  This necessitates pragmatic solutions in the form of algorithms and closed form solutions for optimal design strategies in special situations and for specific graphs and graphs classes.    We have illustrated the utility of the construction matrix, our \emph{Maple} code, and a variety of ad hoc methods, in providing provably optimal pots in the various scenarios for the cube graph, square and triangular lattice graphs, and square and triangular lattice tube graphs. These selected examples also illustrate some of the challenges in determining minimum values for numbers of tiles and numbers of bond-edge types in Scenarios 2 and 3, particularly since we have shown that the same pot may not achieve both minimums.  
 
Our work expands known results from a small set of standard graph families to a variety of other graph types, including those common to specific applications of laboratory based DNA self-assembly. We highlighted a few notable graphs and graph classes here, and note that further results for several other families of graphs can be found in \cite{repository}.  
Much work remains to be done in seeking algorithms of broader applicability and $B$ and $T$ values for further families of graphs.  The graph theoretical implications warrant further exploration as well.  $T(G)$  and $B(G)$ are entirely  new  graph  invariants  of  independent  mathematical  and computational interest,  which naturally suggests questions such as their relation to other graph parameters.

\subsection{Acknowledgments}
Research supported in part by the program Research Experiences for Undergraduate Faculty (REUF). REUF is a program of the American Institute of Mathematics (AIM) and the Institute for Computational and Experimental Mathematics (ICERM), made possible by the support from the National Science Foundation (NSF) through DMS 1239280. Our research with students was supported in part by the following: Colonel Stephen S. and Lyla Doherty Center for Aviation and Health Research Grant, Lewis University Caterpillar Grant, The Dr. James Girard Summer Undergraduate Research Grant, Grant \# P20GM103499 (SC INBRE) from the National Institute of General Medical Sciences, National Institutes of Health.\par
Several of our students have aided in furthering our body of knowledge on this project, producing results in one or more scenarios for a variety of other niche graph families. To this end, we thank: Paul Buldak, 
Keller Dellinger, 
Hector Dondiego,   
Lauren Gernes, 
Ernesto Gonzalez, 
Chloe Griffin, 
Jackson Hansen,  
Brandon Joutras, 
Andrew Lavengood-Ryan, 
Gabriel Lopez, 
Chiara Mattamira, 
Samantha Mauro,
Sydney Martin,
Miles Mena, 
Simon Merheb, 
MeiRose Neal, 
Kayla Noon, 
Audrey Pearson, 
Heather Ray, 
Eric Redmon, 
Alvi Renzyl Cortez, 
Nick Soto, 
Adrian Siwy, 
James Sparks, 
Tyler Starkus, 
Chandler Stimpert, 
Quinn Stratton, 
Megan Vesta,
and
Jessica Williams.
\appendix
\label{appendix}

\newpage

\bibliographystyle{plain} 
\bibliography{REUFbib}

\begin{thebibliography}{10}

\bibitem{adleman1994molecular}
Leonard~M. Adleman.
\newblock Molecular computation of solutions to combinatorial problems.
\newblock {\em Science}, 266(5187):1021--1024, 1994.

\bibitem{MapleProgram}
Leyda Almod\'{o}var, Jo~Ellis-Monaghan, Amanda Harsy, Cory Johnson, and Jessica
  Sorrells.
\newblock {Spectrum of the pot with 1-2 free variables}
  \url{https://github.com/am2an7da9/spectrum-of-the-pot-with-1-2-free-variables}.

\bibitem{repository}
Leyda Almod\'{o}var, Jo~Ellis-Monaghan, Amanda Harsy, Cory Johnson, and Jessica
  Sorrells.
\newblock Optimal tile-based dna self-assembly designs for lattice graphs and
  platonic solids.
\newblock 2021.
\newblock In preparation.

\bibitem{Almodovar2019}
Leyda Almod\'{o}var, Samantha Mauro, Sydney Martin, and Heiko Todt.
\newblock Minimal tile and bond-edge types for self-assembling {DNA} graphs of
  triangular lattice graphs.
\newblock {\em Congressus Numeratium}, 232:241--263, 2019.

\bibitem{andersen2015towards}
Jakob~L. Andersen, Christoph Flamm, Martin~M. Hanczyc, and Daniel Merkle.
\newblock Towards optimal {DNA}-templated computing.
\newblock {\em International Journal of Unconventional Computing}, 11, 2015.

\bibitem{benson2015DNA}
Erik Benson, Abdulmelik Mohammed, Johan Gardell, Sergej Masich, Eugen Czeizler,
  Pekka Orponen, and Bj{\"o}rn H{\"o}gberg.
\newblock Dna rendering of polyhedral meshes at the nanoscale.
\newblock {\em Nature}, 523(7561):441--444, 2015.

\bibitem{BF2020}
Simona Bonvicini and Margherita~Maria Ferrari.
\newblock On the minimum number of bond-edge types and tile types: an approach
  by edge-colorings of graphs.
\newblock {\em Discrete Appl. Math.}, 277:1--13, 2020.

\bibitem{chen1991synthesis}
Junghuei Chen and Nadrian~C. Seeman.
\newblock Synthesis from {DNA} of a molecule with the connectivity of a cube.
\newblock {\em Nature}, 350(6319):631--633, 1991.

\bibitem{Dailey1980}
David~P. Dailey.
\newblock Uniqueness of colorability and colorability of planar {$4$}-regular
  graphs are {NP}-complete.
\newblock {\em Discrete Math.}, 30(3):289--293, 1980.

\bibitem{ellis2019tile}
Joanna Ellis-Monaghan, Nata{\v{s}}a Jonoska, and Greta Pangborn.
\newblock Tile-based {DNA} nanostructures: Mathematical design and problem
  encoding.
\newblock In {\em Algebraic and Combinatorial Computational Biology}, pages
  35--60. Elsevier, 2019.

\bibitem{ellis2013example}
Joanna Ellis-Monaghan and Greta Pangborn.
\newblock An example of practical organization for undergraduate research
  experiences.
\newblock {\em PRIMUS}, 23(9):805--814, 2013.

\bibitem{ellis2014minimal}
Joanna Ellis-Monaghan, Greta Pangborn, Laura Beaudin, David Miller, Nick Bruno,
  and Akie Hashimoto.
\newblock Minimal tile and bond-edge types for self-assembling {DNA} graphs.
\newblock In {\em Discrete and Topological Models in Molecular Biology}, pages
  241--270. Springer, 2014.

\bibitem{CW2017}
Constantine~G. Evans and Erik Winfree.
\newblock Physical principles for dna tile self-assembly.
\newblock {\em Chem. Soc. Rev.}, 46:3808--3829, 2017.

\bibitem{ferrari2018}
Margherita Ferrari, Anna Cook, Alana Houlihan, Rebecca Roulaeu, Nadrian Seeman,
  Greta Pangborn, and Joanna Ellis-Monaghan.
\newblock Design formalism for {DNA} self-assembly of polyhedral skeletons
  using rigid tiles.
\newblock {\em The Journal of Mathematical Chemistry}, 56(5):1365--1392, 2018.

\bibitem{funke20162}
Jonas~J. Funke, Philip Ketterer, Corinna Lieleg, Philipp Korber, and Hendrik
  Dietz.
\newblock Exploring nucleosome unwrapping using {DNA} origami.
\newblock {\em Nano Letters}, 16(12):7891--7898, 2016.

\bibitem{funke2016}
Jonas~J. Funke, Philip Ketterer, Corinna Lieleg, Sarah Schunter, Philipp
  Korber, and Hendrik Dietz.
\newblock Uncovering the forces between nucleosomes using {DNA} origami.
\newblock {\em Science Advances}, 2(11), 2016.

\bibitem{41}
Andre~K. Geim and Konstantin~S. Novoselov.
\newblock The rise of graphene.
\newblock In {\em Nanoscience and Technology: A Collection of Reviews from
  Nature Journals}, pages 11--19. World Scientific, 2010.

\bibitem{gerling2015}
Thomas Gerling, Klaus~F. Wagenbauer, Andrea~M. Neuner, and Hendrik Dietz.
\newblock Dynamic {DNA} devices and assemblies formed by shape-complementary,
  non{\textendash}base pairing {3D} components.
\newblock {\em Science}, 347(6229):1446--1452, 2015.

\bibitem{goodnow2017dna}
Robert~A. Goodnow, Christoph~E. Dumelin, and Anthony~D. Keefe.
\newblock {DNA}-encoded chemistry: enabling the deeper sampling of chemical
  space.
\newblock {\em Nature Reviews Drug Discovery}, 16(2):131--147, 2017.

\bibitem{43}
Hongzhou Gu, Jie Chao, Shou-Jun Xiao, and Nadrian~C. Seeman.
\newblock A proximity-based programmable {DNA} nanoscale assembly line.
\newblock {\em Nature}, 465(7295):202, 2010.

\bibitem{hansen2018DNA}
Bjarke~N. Hansen, Kim~S. Larsen, Daniel Merkle, and Alexei Mihalchuk.
\newblock {DNA}-templated synthesis optimization.
\newblock {\em Natural Computing}, 17(4):693--707, 2018.

\bibitem{he2008hierarchical}
Yu~He, Tao Ye, Min Su, Chuan Zhang, Alexander~E Ribbe, Wen Jiang, and Chengde
  Mao.
\newblock Hierarchical self-assembly of {DNA} into symmetric supramolecular
  polyhedra.
\newblock {\em Nature}, 452(7184):198, 2008.

\bibitem{49}
Ryosuke Iinuma, Yonggang Ke, Ralf Jungmann, Thomas Schlichthaerle, Johannes~B
  Woehrstein, and Peng Yin.
\newblock Polyhedra self-assembled from {DNA} tripods and characterized with
  {3D} {DNA}-paint.
\newblock {\em science}, page 1250944, 2014.

\bibitem{jonoska2006spectrum}
Nata{\v{s}}a Jonoska, Gregory~L. McColm, and Ana Staninska.
\newblock Spectrum of a pot for {DNA} complexes.
\newblock In {\em International Workshop on {DNA}-Based Computers}, pages
  83--94. Springer, 2006.

\bibitem{55}
Nata{\v{s}}a Jonoska, Phiset Sa-Ardyen, and Nadrian~C. Seeman.
\newblock Computation by self-assembly of {DNA} graphs.
\newblock {\em Genetic Programming and Evolvable Machines}, 4(2):123--137,
  2003.

\bibitem{59}
Neville~R. Kallenbach, Rong-Ine Ma, and Nadrian~C. Seeman.
\newblock An immobile nucleic acid junction constructed from oligonucleotides.
\newblock {\em Nature}, 305(5937):829, 1983.

\bibitem{61}
Hyunho Kim, Sungwoo Yang, Sameer~R. Rao, Shankar Narayanan, Eugene~A. Kapustin,
  Hiroyasu Furukawa, Ari~S. Umans, Omar~M. Yaghi, and Evelyn~N. Wang.
\newblock Water harvesting from air with metal-organic frameworks powered by
  natural sunlight.
\newblock {\em Science}, 356(6336):430--434, 2017.

\bibitem{labean2007constructing}
Thom~H. LaBean and Hanying Li.
\newblock Constructing novel materials with {DNA}.
\newblock {\em Nano Today}, 2(2):26--35, 2007.

\bibitem{le2016}
Jenny~V. Le, Yi~Luo, Michael~A. Darcy, Christopher~R. Lucas, Michelle~F.
  Goodwin, Michael~G. Poirier, and Carlos~E. Castro.
\newblock ``{P}robing {N}ucleosome {S}tability with a {DNA} {O}rigami
  {N}anocaliper''.
\newblock {\em ACS Nano}, 10(7):7073--7084, 2016.

\bibitem{64}
Di~Liu, Gang Chen, Usman Akhter, Timothy~M Cronin, and Yossi Weizmann.
\newblock Creating complex molecular topologies by configuring {DNA} four-way
  junctions.
\newblock {\em Nature chemistry}, 8(10):907, 2016.

\bibitem{65}
Di~Liu, Yaming Shao, Gang Chen, Yuk-Ching Tse-Dinh, Joseph~A. Piccirilli, and
  Yossi Weizmann.
\newblock Synthesizing topological structures containing rna.
\newblock {\em Nature communications}, 8:14936, 2017.

\bibitem{Liu2019}
Xiaoguo Liu, Yan Zhao, Pi~Liu, Lihua Wang, Jianping Lin, and Chunhai Fan.
\newblock Biomimetic {DNA} nanotubes: Nanoscale channel design and
  applications.
\newblock {\em Angewandte Chemie International Edition}, 58(27):8996--9011,
  2019.

\bibitem{67}
Kyle Lund, Anthony~J. Manzo, Nadine Dabby, Nicole Michelotti, Alexander
  Johnson-Buck, Jeanette Nangreave, Steven Taylor, Renjun Pei, Milan~N.
  Stojanovic, Nils~G. Walter, et~al.
\newblock Molecular robots guided by prescriptive landscapes.
\newblock {\em Nature}, 465(7295):206, 2010.

\bibitem{Mattamira}
Chiara Mattamira.
\newblock {DNA} self-assembly design for gear graphs.
\newblock {\em Rose-Hulman Undergraduate Mathematics Journal}, 21, 2020.
\newblock \url{https://scholar.rose-hulman.edu/rhumj/vol21/iss1/11}, Last
  accessed on \today.

\bibitem{69}
Tosan Omabegho, Ruojie Sha, and Nadrian~C. Seeman.
\newblock A bipedal {DNA} brownian motor with coordinated legs.
\newblock {\em Science}, 324(5923):67--71, 2009.

\bibitem{72}
John~A. Pelesko.
\newblock {\em Self assembly: the science of things that put themselves
  together}.
\newblock Chapman and Hall/CRC, 2007.

\bibitem{rothemund2006folding}
Paul~W.K. Rothemund.
\newblock Folding {DNA} to create nanoscale shapes and patterns.
\newblock {\em Nature}, 440(7082):297--302, 2006.

\bibitem{rothemund2004design}
Paul~W.K. Rothemund, Axel Ekani-Nkodo, Nick Papadakis, Ashish Kumar,
  Deborah~Kuchnir Fygenson, and Erik Winfree.
\newblock Design and characterization of programmable {DNA} nanotubes.
\newblock {\em Journal of the American Chemical Society}, 126(50):16344--16352,
  2004.

\bibitem{75}
Phiset Sa-Ardyen, Nata{\v{s}}a Jonoska, and Nadrian~C. Seeman.
\newblock Self-assembling {DNA} graphs.
\newblock In {\em International Workshop on {DNA}-Based Computers}, pages 1--9.
  Springer, 2002.

\bibitem{Seeman82}
Nadrian~C. Seeman.
\newblock Nucleic acid junctions and lattices.
\newblock {\em Journal of Theoretical Biology}, 99:237--247, 1982.

\bibitem{seeman2007overview}
Nadrian~C. Seeman.
\newblock An overview of structural {DNA} nanotechnology.
\newblock {\em Molecular biotechnology}, 37(3):246, 2007.

\bibitem{78}
Nadrian~C. Seeman.
\newblock {\em Structural {DNA} nanotechnology}.
\newblock Cambridge University Press, 2016.

\bibitem{SK94}
Nadrian~C. Seeman and Neville~R. Kallenbach.
\newblock {DNA} branched junctions.
\newblock {\em Annual Review of Biophysics and Biomolecular Structure},
  23(1):53--86, 1994.
\newblock PMID: 7919792.

\bibitem{steph2020}
Nicholas Stephanopoulos.
\newblock Hybrid nanostructures from the self-assembly of proteins and {DNA}.
\newblock {\em Chem}, 6(2):364 -- 405, 2020.

\bibitem{84}
Wei Sun, Etienne Boulais, Yera Hakobyan, Wei~Li Wang, Amy Guan, Mark Bathe, and
  Peng Yin.
\newblock Casting inorganic structures with {DNA} molds.
\newblock {\em Science}, page 1258361, 2014.

\bibitem{88}
Hui Wang, Russell~J. Di~Gate, and Nadrian~C. Seeman.
\newblock An {RNA} topoisomerase.
\newblock {\em Proceedings of the National Academy of Sciences},
  93(18):9477--9482, 1996.

\bibitem{89}
Yinli Wang, John~E. Mueller, B{\"o}rries Kemper, and Nadrian~C. Seeman.
\newblock Assembly and characterization of five-arm and six-arm {DNA} branched
  junctions.
\newblock {\em Biochemistry}, 30(23):5667--5674, 1991.

\bibitem{wickham2012DNA}
Shelley~F.J. Wickham, Jonathan Bath, Yousuke Katsuda, Masayuki Endo, Kumi
  Hidaka, Hiroshi Sugiyama, and Andrew~J. Turberfield.
\newblock A {DNA}-based molecular motor that can navigate a network of tracks.
\newblock {\em Nature nanotechnology}, 7(3):169--173, 2012.

\bibitem{wilner2011self}
Ofer~I. Wilner, Ron Orbach, Anja Henning, Carsten Teller, Omer Yehezkeli,
  Michael Mertig, Daniel Harries, and Itamar Willner.
\newblock Self-assembly of {DNA} nanotubes with controllable diameters.
\newblock {\em Nature communications}, 2:540, 2011.

\bibitem{91}
Erik Winfree.
\newblock {\em Algorithmic self-assembly of {DNA}}.
\newblock PhD thesis, California Institute of Technology, 1998.

\bibitem{92}
Erik Winfree, Furong Liu, Lisa~A. Wenzler, and Nadrian~C. Seeman.
\newblock Design and self-assembly of two-dimensional {DNA} crystals.
\newblock {\em Nature}, 394(6693):539, 1998.

\bibitem{94}
Gang Wu, Natasha Jonoska, and Nadrian~C. Seeman.
\newblock Construction of a {DNA} nano-object directly demonstrates
  computation.
\newblock {\em Biosystems}, 98(2):80--84, 2009.

\bibitem{yan2003DNA}
Hao Yan, Sung~Ha Park, Gleb Finkelstein, John~H. Reif, and Thomas~H. LaBean.
\newblock {DNA}-templated self-assembly of protein arrays and highly conductive
  nanowires.
\newblock {\em science}, 301(5641):1882--1884, 2003.

\bibitem{70}
Peng Yin, Hao Yan, Xiaoju~G Daniell, Andrew~J Turberfield, and John~H Reif.
\newblock A unidirectional {DNA} walker that moves autonomously along a track.
\newblock {\em Angewandte Chemie}, 116(37):5014--5019, 2004.

\bibitem{zhang2012}
Chuan Zhang, Cheng Tian, Fei Guo, Zheng Liu, Wen Jiang, and Chengde Mao.
\newblock {DNA}-directed three-dimensional protein organization.
\newblock {\em Angewandte Chemie Inte{rna}tional Edition}, 51(14):3382--3385,
  2012.

\bibitem{zhang1994construction}
Yuwen Zhang and Nadrian~C. Seeman.
\newblock Construction of a {DNA}-truncated octahedron.
\newblock {\em Journal of the American Chemical Society}, 116(5):1661--1669,
  1994.

\bibitem{99}
Jianping Zheng, Jens~J. Birktoft, Yi~Chen, Tong Wang, Ruojie Sha, Pamela~E.
  Constantinou, Stephan~L. Ginell, Chengde Mao, and Nadrian~C. Seeman.
\newblock From molecular to macroscopic via the rational design of a
  self-assembled {{3D} {DNA}} crystal.
\newblock {\em Nature}, 461(7260):74, 2009.

\end{thebibliography}

\end{document}